\DeclareFontFamily{U}{mathx}{\hyphenchar\font45}
\DeclareFontShape{U}{mathx}{m}{n}{
      <5> <6> <7> <8> <9> <10>
      <10.95> <12> <14.4> <17.28> <20.74> <24.88>
      mathx10
      }{}
\DeclareSymbolFont{mathx}{U}{mathx}{m}{n}
\DeclareMathAccent{\widecheck}{0}{mathx}{"71}
\numberwithin{equation}{section}
\newtheorem{theorem}{Theorem}[section]
\newtheorem{corollary}[theorem]{Corollary}
\newtheorem{lemma}[theorem]{Lemma}
\newtheorem{proposition}[theorem]{Proposition}
\newtheorem{defn}[theorem]{Definition}
\theoremstyle{remark}
\newtheorem{remark}{Remark}[section]
\theoremstyle{definition}
\newtheorem{example}[theorem]{Example}
\newcommand{\bke}[1]{\left ( #1 \right )}
\newcommand{\bket}[1]{\left \{ #1 \right \}}
\newcommand{\norm}[1]{\left  \| #1  \right \|}
\newcommand{\abs}[1]{\left | #1 \right |}
\newcommand\al{\alpha}
\newcommand\be{\beta}
\newcommand\de{\delta}
\newcommand\ep{\epsilon}
\newcommand\e {\varepsilon}  %
\renewcommand\th{\theta}
\newcommand\si{\sigma}
\newcommand\ph{\varphi}
\newcommand\De{\Delta}
\newcommand\Si{\Sigma}
\newcommand\Om{\Omega}
\newcommand{\R}{\mathbb{R}}
\newcommand{\ZZ}{\mathbb{Z}}
\newcommand{\NN}{\mathbb{N}}
\newcommand{\cE}{\mathcal{E}}
\newcommand{\cF}{\mathcal{F}}
\newcommand{\cJ}{\mathcal{J}_\ep}
\newcommand{\cp}{\widecheck{p}}
\newcommand{\hp}{\widehat{p}}
\renewcommand{\div}{\mathop{\rm div}}
\newcommand{\supp} {\mathop{\mathrm{supp}}}
\newcommand{\pv} {\mathop{\mathrm{p.v.\!}}}
\newcommand{\tr} {\mathop{\mathrm{tr}}}
\newcommand{\pd}{\partial}
\newcommand{\pa}{\partial}
\newcommand{\nb}{\nabla}
\newcommand{\na}{\nabla}
\newcommand{\td}{\tilde}
\newcommand{\lec}{ \lesssim  }
\newcommand{\I}{\infty}
\newcommand{\EQ}[1]{\begin{equation}\begin{split} #1 \end{split}\end{equation}}
\newcommand{\EQN}[1]{\begin{equation*}\begin{split} #1 \end{split}\end{equation*}}
\newcommand{\uloc}{{\mathrm{uloc}}}
\newcommand{\loc}{{\mathrm{loc}}}
\begin{document}
\title{Global Navier-Stokes flows for non-decaying initial data with slowly decaying oscillation}

\author{Hyunju Kwon \and Tai-Peng Tsai}
\date{}%November 7, 2018}

\maketitle
\begin{abstract}
Consider the Cauchy problem of incompressible Navier-Stokes equations in $\R^3$ with uniformly locally square integrable initial data. If the square integral of the initial datum on a ball vanishes as the ball goes to infinity, the existence of a time-global weak solution has been known. However, such data do not include constants, and the only known global solutions for non-decaying data are either for perturbations of constants, or when the velocity gradients are in $L^p$ with finite $p$. In this paper, we construct global weak solutions for non-decaying initial data whose local oscillations decay, no matter how slowly.

{\it Keywords}: incompressible Navier-Stokes equations, non-decaying initial data, oscillation decay, global existence, local energy solution

{\it Mathematics Subject Classification (2010)}: 35Q30, 76D05, 35D30

\end{abstract}
\section{Introduction}

In this paper, we consider the incompressible Navier-Stokes equations 
\begin{equation}\label{NS}\tag{NS}
\begin{cases}
\pa_t v -\De v + (v\cdot \na )v + \na p = 0   \\ 
\div v =0 \\
v|_{t=0}=v_0  
\end{cases}
\end{equation}
in $\R^3\times (0,T)$ for $0 <T\le \infty$. These equations describe the flow of incompressible viscous fluids, so the solution $v:\R^3\times (0,T)\to \R^3$ and $p:\R^3\times (0,T)\to \R$ represent the flow velocity and the pressure, respectively.
 
For an initial datum with finite kinetic energy, $v_0\in L^2(\R^3)$, the existence of a time-global weak solution dates back to Leray \cite{leray}. This solution has a finite global energy, i.e, it satisfies the energy inequality:
\EQ{\label{energy.ineq}
\norm{v(\cdot, t)}_{L^2(\R^3)}^2 + 2\norm{\na v}_{L^2(0,t;L^2(\R^3))}^2 
\leq \norm{v_0}_{L^2(\R^3)}^2, \quad \forall t>0. 
}
In Hopf \cite{Hopf}, this result is extended to smooth bounded domains with the Dirichlet boundary condition. We say $v$ is \textit{a Leray-Hopf weak solution} to \eqref{NS} in $\Omega \times (0,T)$ for a domain $\Om\subset \R^3$, if 
\[
v\in L^\infty(0,T;L^2_{\si}(\Om))\cap L^2(0,T;H_{0,\si}^1(\Om))\cap C_{wk}([0,T);L^2_{\si}(\Om))
\]
satisfies the weak form of \eqref{NS} and the energy inequality \eqref{energy.ineq}.

However, when a fluid fills an unbounded domain, it is possible to have finite local energy but
infinite global energy. One such example is a fluid with constant velocity.  There are also many interesting non-decaying infinite energy flows like time-dependent spatially periodic flows (flows on torus) and \emph{two-and-a-half dimensional flows}; see \cite[Section 2.3.1]{MaBe} and \cite{Gallagher}.  Can we get global existence for such data? To analyze the motion of such fluids, one may consider the class $ L^2_\uloc$ for velocity field $v_0$ in $\R^3$ whose kinetic energy is uniformly locally bounded. Here, for $1\le q \le \infty$, we denote by $L^q_\uloc$ the space of functions in $\R^3$ with
\[
\norm{v_0}_{L^q_\uloc} := \sup_{x_0 \in \R^3} \norm{v_0}_{L^q(B(x_0,1))}
<\infty.
\]
We also denote its subsapce with spatial decay
\[
E^q = \big\{v_0 \in L^q_{\uloc}: \, \lim_{|x_0|\to \infty}\norm{v_0}_{L^q(B(x_0,1))} =0\big\}.
\]
In \cite{LR}, Lemari\'{e}-Rieusset introduced the class of \textit{local energy solutions} for initial data $v_0 \in L^2_\uloc$ (see Section \ref{loc.ex.sec} for details). He proved the short time existence for initial data in $L^2_\uloc$, and the global in time existence for $v_0\in E^2$, those initial data in $L^2_\uloc$ which further satisfy the
 spatial decay condition%
\EQ{\label{ini.E2}
\lim_{|x_0|\to \infty}\int_{B(x_0,1)} |v_0|^2 dx=0.
}
Then, Kikuchi-Seregin \cite{KS} added more details to the results in \cite{LR}, especially the careful treatment of the pressure. They also allowed a force term $g$ in \eqref{NS} which satisfies $\div g=0$ and %
\[
\lim_{|x_0|\to \infty}\int_0^T\!\int_{B(x_0,1)}|g(x,t)|^2 dxdt  =0, \quad \forall T>0. 
\] 
Recently, Maekawa-Miura-Prange \cite{MaMiPr} generalized this result to the half-space $\R^3_+$. The treatment of the pressure in \cite{MaMiPr} is even more complicated.

One key difficulty in the study of infinite energy solutions is the estimates of the pressure. While finite energy solutions have enough decay at spatial infinity and one may often get the pressure from the equation $p = (-\De)^{-1}\pa_i\pa_j(v_iv_j)$, this is not applicable to infinite energy solutions because of their slow (or no) spatial decay. 

To estimate the pressure, the definition of a local energy solution in \cite{KS} includes a locally-defined pressure decomposition near each point in $\R^3$, see condition (v) in Definition \ref{les}. (It is already in \cite{LR} but not part of the definition.) In \cite{JiaSverak-minimal}-\cite{JiaSverak}, on the other hand, Jia and \v Sver\'ak use a slightly different definition by replacing the decomposition condition by the spatial decay of the velocity  
\begin{equation}
\label{decay.condi.parR}
\lim_{|x_0|\to \infty}\int_0^{R^2}\int_{B(x_0,R)}|v(x,t)|^2 dxdt =0, \quad\forall R>0.
\end{equation}
Under the decay assumption \eqref{ini.E2} on initial data, these two definitions can be shown to be  equivalent; see  \cite{MaMiPr, KMT}.  However, for general non-decaying initial data, the decay condition $\eqref{decay.condi.parR}$ is not expected, while the decomposition condition still works. For this reason, we follow the definition of Kikuchi-Seregin \cite{KS} in this paper. 

A new feature in the study of infinite energy solutions with non-decaying initial data is the abundance of \emph{parasitic solutions},
\[
v(x,t) = f(t),\quad p(x,t) = -f'(t)\cdot x
\]
for a smooth vector function $f(t)$. They solve the Navier-Stokes equations with initial data $f(0)$. If we choose $f_1(t)\not = f_2(t)$ with $f_1(0)=f_2(0)$, the corresponding parasitic solutions give two different local energy solutions with the same initial data. Such solutions have non-decaying initial data, and can be shown to fail the pressure decomposition condition. More generally, if $(v,p)$ is a solution to \eqref{NS}, then the following \emph{parasitic transform}
\EQ{
u(x,t) = v(y,t)+q'(t), \quad \pi(x,t)= p(y,t) - q''(t) \cdot y, \quad
 y=x-q(t)
}
gives another solution $(u,\pi)$ to \eqref{NS} with the same initial data $v_0$ for any vector function $q(t)$ satisfying $q(0)=q'(0)=0$.

We now summarize the known existence results in $\R^3$. 
In addition to the weak solution approach based on the a priori bound \eqref{energy.ineq} following Leray and Hopf, another fruitful approach is the theory of \emph{mild solutions}, treating the nonlinear term as a source term of the nonhomogeneous Stokes system. In the framework of  $L^q(\R^3)$, there exist short time mild solutions in $L^q(\R^3)$
when $3 \le q \le \infty$ (\cite{FJR,Kato84,GIM}).
When $q=3$, these solutions exist for all time for sufficiently small initial data in $L^3(\R^3)$; see \cite{Kato84}. Similar small data global existence results hold for many other spaces of similar scaling property, such as $L^3_{\text{weak}}$, Morrey spaces $M_{p,3-p}$, negative Besov spaces $\dot B^{3/q-1}_{q,\infty}$, $3<q<\infty$, and the Koch-Tataru space BMO$^{-1}$; See e.g.~\cite{GiMi,Kato,KoYa,Barraza,CP,BCD,Koch-Tataru}. 

For any data $v_0  \in L^q(\R^3)$, $2<q<3$, 
Calder\'on \cite{Calderon} constructed a global solution. His strategy is to first decompose $v_0 = a_0+b_0$ with small $a_0 \in L^3(\R^3)$ and large $b_0\in L^2(\R^3)$. A solution is then obtained as $v=a+b$, where $a$ is a global small mild solution of \eqref{NS} in $L^3(\R^3)$ with $a(0)=a_0$, and $b$ is a global weak solution of the $a$-perturbed Navier-Stokes equations in the energy class
with $b(0)=b_0$.

This idea is then used by Lemari\'{e}-Rieusset \cite{LR} to construct global local energy solutions for $v_0 \in E^2$; also see Kikuchi-Seregin \cite{KS}.

We now summarize the known existence results for non-decaying initial data. For the local existence,  many mild solution existence theorems mentioned earlier allow non-decaying data. The most relevant to us are Giga-Inui-Matsui  \cite{GIM} for initial data in $L^\infty(\R^3)$ and $BUC(\R^3)$, and Maekawa-Terasawa \cite{MT} for initial data in the closure of ${\bigcup_{p>3}L^p_\uloc}$ in $L^3_\uloc$-norm, and any small initial data in $L^3_\uloc$. Smallness is needed for $L^3_\uloc$ data even for short time existence.

When it comes to the global existence for non-decaynig data, a solution theory for perturbations of constant vectors seems straightforward. Lemari\'{e}-Rieusset \cite[Theorem 1(C)]{LR-Morrey} constructed global weak solutions for $u_0$ in Morrey space $M^{2,1}$, which contains non-decaying functions, e.g.
\[
v_0(x) = \sum_{k \in \NN} \zeta(x-x_k)
\]
with $|x_k|\to \infty$ rapidly as $k \to \I$. Here $\zeta$ is any smooth divergence free vector field with compact support.

The only other result we are aware of is the recent paper Maremonti-Shimizu \cite{MaSe}, which proved the global existence of weak solutions for initial data $v_0$ in $L^\infty(\R^3)\cap \overline{C_0(\R^3)}^{\dot{W}^{1,q}}$, $3<q<\infty$. In particular, they assume $\nb v_0 \in L^q(\R^3)$. Their strategy is to decompose the solution $v = U + w$, $U=\sum_{k=1}^n v^k$, where $v^1$ solves the Stokes equations with the given initial data, and $v^{k+1}$, $k\geq 1$, solves the linearized Navier-Stokes equations with the force $f^k=-v^{k}\cdot\na v^k$ and homogeneous initial data.  The force $f^1 \in L^q(0,T;L^q(\R^3))$ thanks to the assumption on $v_0$. In each iteration, we get an additional decay of the force $f^k$. The perturbation $w$ is then solved in the framework of weak solutions. The paper \cite{MaSe} motivated this paper.

We now state our main theorem.
Denote the average of a function $v$ in a set $O\subset \R^3$ by $(v)_O = \frac 1{|O|}\int_O v(x)\, dx$.
We denote $w\in E^2_\si$ if $w\in E^2$ and $\div w=0$.

\begin{theorem}\label{global.ex}
For any vector field $v_0\in E^2_\si + L^3_\uloc$ satisfying $\div v_0=0$ and
\EQ{\label{ini.decay}
\lim_{|x_0|\to \infty}\int_{B(x_0,1)}| v_0- (v_0)_{B(x_0,1)}| dx =0,  
}
we can find a time-global local energy solution $(v,p)$ to the Navier-Stokes equations \eqref{NS} in $\R^3 \times (0,\infty)$, in the sense of Definition \ref{les}.
\end{theorem}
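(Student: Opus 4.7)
The plan is to follow a Calder\'on-type splitting strategy in the spirit of Lemari\'e-Rieusset and Maremonti-Shimizu, in which the oscillation-decay hypothesis \eqref{ini.decay} is what makes the splitting succeed without requiring $\na v_0 \in L^q$. The first step is to decompose $v_0 = a_0 + b_0$ with $a_0$ divergence-free, smooth and bounded (and admitting a global smooth Navier-Stokes evolution), while $b_0 \in E^2_\sigma$ falls within the scope of the Lemari\'e-Rieusset/Kikuchi-Seregin theory. A natural construction is to mollify the $L^3_\uloc$-component of $v_0$ at a fixed scale and Leray-project to remain divergence-free; the hypothesis \eqref{ini.decay} then forces the remainder to have vanishing local $L^1$-mean at infinity, which upgrades (using the ambient $L^2_\uloc$ bound) to vanishing local $L^2$-mass, after a Bogovskii-type correction to restore $\div b_0 = 0$.

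The second step is to produce a global smooth $U$ with $U|_{t=0} = a_0$. I would adapt the Maremonti-Shimizu iteration: take $U^1$ to be the Stokes evolution of $a_0$ and define $U^{k+1}$ as the Stokes solution with force $-U^k\cdot\na U^k$ and zero initial data. The smoothing of the heat semigroup applied to $a_0$ (whose oscillation decays) should give $\na U^1(\cdot,t)\in L^q(\R^3)$ for some $q>3$ with integrable-in-time bounds, and each subsequent iterate should gain more integrability, so that $U=\sum_k U^k$ converges and solves \eqref{NS} globally with $U\in L^\infty_{t,x}$ and $\na U\in L^q(0,T;L^q(\R^3))$ on any finite time interval.

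The third step is to solve the $U$-perturbed Navier-Stokes system for $w = v - U$,
\EQN{
\pa_t w -\De w + (w\cdot\na)w + (U\cdot\na)w + (w\cdot\na)U + \na\pi = 0,\quad \div w=0,\quad w|_{t=0}=b_0,
}
inside the Kikuchi-Seregin local energy framework for $b_0\in E^2_\sigma$. The construction goes through a standard approximation (e.g.\ mollified initial data, or a cut-off of the equation on large balls) combined with a local-energy a priori bound; the extra drift $(U\cdot\na)w$ and stretching $(w\cdot\na)U$ are absorbed by H\"older/Young estimates using the $L^\infty_{t,x}$ and $L^q_{t,x}$ bounds on $U,\na U$ from step two. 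The pressure $\pi$ admits the local decomposition required by condition (v) of Definition \ref{les}, with the additional $U$-contributions treated analogously, and $v=U+w$ with the combined pressure is the sought local energy solution.

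The main obstacle I anticipate is the pressure analysis and the joint verification of the local pressure decomposition for the pair $v=U+w$: both pressures must be written in the local form of Definition \ref{les}, even though $U$ is non-decaying and the standard $(-\De)^{-1}\pa_i\pa_j$-representation is unavailable. Parasitic-type solutions are then automatically excluded by the decomposition, thanks to the global smoothness of $U$ extracted from the iteration. A secondary difficulty is verifying that the oscillation-decay hypothesis \eqref{ini.decay} alone (rather than $\na v_0\in L^q$ as in Maremonti-Shimizu) suffices to close the iteration for $U$; this is the place where the new hypothesis enters in an essential way.
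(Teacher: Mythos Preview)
Your proposal contains a genuine gap in Step 2, and the difficulty you label ``secondary'' is in fact fatal to the approach as written.

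The oscillation-decay hypothesis \eqref{ini.decay} does \emph{not} yield $\na U^1(\cdot,t)\in L^q(\R^3)$ for any finite $q$. What Lemma \ref{decay.na.U} actually gives is $\na e^{t\De}a_0 \in C_0(\R^3)$ for each fixed $t>0$, with uniformity only for $t\ge t_0>0$; there is no quantitative rate of spatial decay, and no uniformity as $t\to 0_+$. A function in $C_0$ need not lie in any $L^q$ with $q<\infty$. Consequently the force $f^1=-U^1\cdot\na U^1$ is merely in $L^\infty_t C_0$, and the Maremonti--Shimizu iteration, which relies on the quantitative bootstrap $f^k\in L^q_{t,x}$ to gain integrability at each step, does not close. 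More fundamentally, producing a \emph{global} smooth Navier--Stokes flow from smooth bounded data $a_0$ is precisely the open problem you are trying to solve; mollification buys smoothness but not the gradient integrability that Maremonti--Shimizu actually need, so Step 2 is circular.

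The paper avoids this by abandoning the attempt to solve \eqref{NS} for the non-decaying part. It takes $V=e^{t\De}u_0$ to be the \emph{linear} heat flow and sets $w=v-V$, so that $w$ satisfies a perturbed system with the extra source $(V\cdot\na)V$. The $C_0$-decay of $\na V$ is then enough, provided one introduces a new pressure decomposition (Lemma \ref{new.decomp}) that separates the $V\otimes V$ contribution and rewrites it using $\na V$ rather than $V$ itself. The non-uniformity of this decay near $t=0$ forces a separate short-time estimate (Lemma \ref{th0730b}) and a strong local energy inequality away from $t=0$ (Lemma \ref{th:SLEI}), both of which are tied to the specific approximation scheme used in the local existence. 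Once $w(t)\in E^6$ for a.e.\ $t$, the solution is extended by a Calder\'on-type splitting of $w(t_0)$, not of $v_0$. Your Step 3, by contrast, presupposes a global $U$ with $\na U\in L^q_{t,x}$ in order to absorb the stretching term $(w\cdot\na)U$ in the local energy estimate; without it, that absorption fails.
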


Our main assumption is the ``\emph{oscillation decay}'' condition \eqref{ini.decay}. Note that all $v_0\in L^2_\uloc$ satisfying \eqref{ini.E2} also satisfy \eqref{ini.decay}. Furthermore, for $v_0\in L^2_\uloc$, either $v_0 \in E^1$ or $\na v_0\in E^{1}$ implies the condition \eqref{ini.decay}. Recall $E^q$ for $1 \le q \le \infty$ is the space of functions in $L^q_\uloc$ whose $L^q$-norm in a ball $B_1(x_0)$ goes to zero as $|x_0|$ goes to infinity. In particular,  our result generalizes the global existence for decaying initial data $v_0\in E^2$ in \cite{LR} and \cite{KS}. It also extends \cite{MaSe} for $v_0 \in L^\infty$ and $\nb v_0 \in L^q$.

\begin{example}
Consider
\[
v_0=v_1+v_2, \quad
v_1 = \frac {(-x_2, \ x_1, \ 0)}{\sqrt{|x|^2+1}},\quad
v_2=\frac {(-x_2, \ x_1, \ 0)}{|x|^2+1} \sin \bke{(x^2+1)^{100}}.
\]
We have
$\div v_1=\div v_2 = 0$, $v_0 \not \in E^2$, 
$v_0$ satisfies the oscillation decay condition, and
\[
 \limsup_{|x_0|\to \infty} \int_{B_1(x_0)} |\nb v_0| =\infty.
 \]
 In particular, $v_0 \in L^\infty$ but $\nb v_0 \not \in L^q$ for any $q \le \infty$.
Moreover, $v_0$ is not a perturbation of constant,  although it converges to a constant along each direction.

\end{example}

The condition $v_0 \in E^2_\si+L^3_\uloc$ gives us more regularity on the nondecaying part of $v_0$. We do not know if it is necessary for the global existence, but it is essential for our proof, and enables us to prove that for small $t>0$,
\EQ{\label{eq1.5}
 \norm{w(t)\chi_R}_{L^2_\uloc} \lec (t^\frac 1{20} + \norm{w_0\chi_R}_{L^2_\uloc}),
}
where $\chi_R(x)$ is a cut-off function supported in $|x|>R$, we decompose $v_0=w_0+u_0$ with $w_0\in E^2_\si$ and $u_0 \in L^3_{\uloc}$, and $w(t) = v(t) -e^{t\De}u_0$ with $w(0)=w_0$. This estimate shows that $\norm{w(t)\chi_R}_{L^2_\uloc}$ vanishes as $ t\to 0_+$ and $R\to \infty$.

The idea of our proof is as follows. First, we construct a local energy solution in a short time. For $v_0 \in L^2_\uloc$, this is done in \cite{LR} but not in \cite{KS}. However, we use a slightly revised approximation scheme to make all statements about the pressure easy to verify. In our scheme, we not only mollify the non-linear term as in \cite{leray} and \cite{LR}, but also insert a cut-off function, so that the non-linear term $(v\cdot \na)v$ is replaced by $(\cJ(v)\cdot \na)(v\Phi_{\ep})$, where $\cJ$ is a mollification of scale $\ep$ and $\Phi_\ep$ is a radial bump function supported in the ball $B(0, 2\ep^{-1})$.

Once we have a local-in-time local energy solution, we need some smallness to extend the solution globally in time. To this end, we decompose the solution as $v=V+w$ where $V(t)=e^{t\De}u_0$ solves the heat equation. The main effort is to show that $w(t) \in E^2$ for all $t$ and $w(t) \in E^6$ for almost all $t$. The proof is similar to the decay estimates in \cite{LR,KS} and we try to do local energy estimate for $w \chi_R$. The background $V$ has no spatial decay, but we can show the decay of $\nb V(x,t)$ in $L^\infty(B_R^c\times (t_0,\infty))$ as $R\to \infty$ for any $t_0>0$. This decay is not uniform up to $t_0=0$ as $u_0$ is rather rough. We need a new decomposition formula of the pressure, so that in the intermediate regions we can show the decay of the pressure using the decay of $\nb V$. Because the decay of $\nb V$ is not up to $t_0=0$, we need to do the local energy estimate in the time interval $[t_0,T)$, $0<t_0\ll1$. This forces us to prove the estimate \eqref{eq1.5}, and the \emph{strong local energy inequality} for $w$ away from $t=0$. 

Once we have shown $w(t) \in E^6$ for almost all $t<T$, we can extend the solution as in \cite{LR} and \cite{KS}. However, we avoid using the strong-weak uniqueness as in \cite{LR,KS}, and choose to verify the definition of local energy solutions directly as in \cite{MaMiPr}.
       
The rest of the paper consists of the following sections. In Section \ref{pre}, we discuss the properties of the heat flow $e^{t\De}u_0$, especially the decay of its gradient at spatial infinity assuming \eqref{ini.decay}. In Section \ref{loc.ex.sec}, we recall the definition of local energy solutions as in \cite{KS} and use our revised approximation scheme to find a local energy solution local-in-time. In Section \ref{decay.est.sec}, we find a new pressure decomposition formula suitable of using the decay of $\nb V$, prove the estimate  \eqref{eq1.5} and the strong local energy inequality, and then do the local energy estimate of $w\chi_R$, which implies $w(t) \in E^6$ for almost all $t$. In Section \ref{global.sec}, we construct the desired time-global local energy solution. In Section \ref{sec6}, by a similar and easier proof, we additionally obtain perturbations of time-global solutions with no spatial oscillation decay.

\section{Notations and preliminaries}\label{pre}

\subsection{Notation}
Given two comparable quantities $X$ and $Y$, the inequality $X\lesssim Y$ stands for $X\leq C Y$ for some positive constant $C$. In a similar way, $\gtrsim$ denotes $\geq C$ for some $C>0$. We write $ X \sim Y$ if $X\lesssim Y$ and $Y\lesssim X$. Furthermore, in the case that a constant $C$ in $X\leq C Y$ depends on some quantities $Z_1$, $\cdots$, $Z_n$, we write $X\lesssim_{Z_1,\cdots,Z_n}Y$. The notations $\gtrsim_{Z_1,\cdots,Z_n}$ and $\sim_{Z_1,\cdots,Z_n}$ are similarly defined. 

For a point $x\in \R^3$ and a positive real number $r$, $B(x,r)$ is the Euclidean ball in $\R^3$ centered at $x$ with a radius $r$,
\[
B(x,r) =B_r(x)= \{y\in \R^3: |y-x|<r\}.
\] 
When $x=0$, we denote $B_r = B(0,r)$. For a point $x\in \R^3$ and $r>0$, we denote the open cube centered at $x$ with a side length $2r$ as
\[
Q(x,r)=Q_r(x) = \bket{ y \in \R^3: \max_{i=1,2,3} |y_i -x_{i}| < r}.
\]

We denote the mollification $\cJ(v) = v\ast \eta_\ep$, $\ep>0$, where the mollifier is $\eta_\ep(x) = \e^{-3} \eta\left(\frac x{\ep}\right)$ and $\eta$ is a fixed nonnegative radial bump function in $C_c^\infty(\R^3)$ supported in $B(0,1)$ satisfying %
$\int\eta\, dx =1$. %

Various test functions in this paper are defined by rescaling and translating a non-negative radially decreasing bump function $\Phi$ satisfying $\Phi = 1$ on $B(0,1)$ and $\supp(\Phi)\subset B(0, \frac 32)$. 

For $k \in \NN \cup \{0,\infty\}$, let 
$C^k_c(\R^3)$ be the subset of functions in $C^k(\R^3)$ with compact supports, and
\[
C^{k}_{c,\si}(\R^3) = \bket{ u \in C^{k}_{c}(\R^3;\R^3):\ \div u =0}.
\]

\subsection{Uniformly locally integrable spaces}

To consider infinite energy flows, we work in the spaces $L^q_\uloc$, $1\leq q\leq \infty$, and $U^{s,p}(t_0,t)$ for $1\leq s, p\leq \infty$ and $0\leq t_0<t\le \infty$, defined by
\[
L^q_\uloc = 
\bket{u\in L^1_{\loc}(\R^3): \norm{u}_{L^q_\uloc} = \sup_{x_0\in \R^3} \norm{u}_{L^q(B_1(x_0))} <+\infty  
}
\]
and 
\[
U^{s,p}(t_0,t) = 
\bket{u \in L^1_{\loc}(\R^3\times(t_0,t)): \norm{u}_{U^{s,p}(t_0,t)}= \sup_{x_0\in \R^3} \norm{u}_{L^s(t_0,t;L^p(B_1(x_0)))}<+\infty }.
\]
When $t_0=0$, we simply use $U^{s,p}_T = U^{s,p}(0,T)$. Note that $U^{\infty,p}(t_0,t)=L^\infty(t_0,t;L^p_\uloc)$, $1\leq p \leq \infty$,
but for general $1\leq s<\infty$ and $1\leq p \le \infty$, $U^{s,p}(t_0,t)$ and $L^s(t_0,t;L^p_\uloc)$ are not equivalent norms. Indeed, we can only guarantee that
\EQ{\label{Usp.le.LsUp}
\norm{u}_{U^{s,p}(t_0,t)} \leq \norm{u}_{L^s(t_0,t;L^p_\uloc)},
}
but not the inequality of the other direction. 

\medskip
\begin{example}
Fix $1\le s <\infty$ and $p \in [1,\infty]$. Let $x_k$ be a sequence in $\R^3$ with disjoint $B_1(x_k)$, $k \in \NN$, and let $t_k = t_0+2^{-k}$. Define a function $u$ by $u(x,\tau)=2^{k/s}$ on $B_1(x_k) \times (t_0,t_k)$, $k \in \NN$, and $u(x,\tau)=0$ otherwise. It is defined independently of $p$. We have $u \in U^{s,p}(t_0,t)$, but 
\[
\int _{t_0}^{t_1} \norm{u(\cdot,\tau)}_{L^p_\uloc}^s d\tau = \sum_{k=1}^\infty \int_{t_{k+1}}^{t_k} c_p2^k d\tau = \sum_{k=1}^\infty \frac 12 c_p=\infty,
\]
and hence $u \not \in L^s(t_0,t;L^p_\uloc)$.
\qed
\end{example}
\medskip

We define a local energy space $\cE(t_0,t)$ by
\EQ{\label{cE.def}
\cE(t_0,t) = \bket{u\in L^2_\loc([t_0,t]\times \R^3;\R^3) \ : \ \div u =0, \ 
\norm{u}_{\cE(t_0,t)} <+\infty},
}
where
\[
\norm{u}_{\cE(t_0,t)} := \norm{u}_{U^{\infty,2}(t_0,t)} + \norm{\na u}_{U^{2,2}(t_0,t)}.
\]
When $t_0=0$, we use the abbreviation $\cE_T = \cE(0,T)$. 

The spaces $E^p$ and $G^p(t_0,t)$, $1\leq p\leq \infty$, are defined by an additional decay condition at infinity,
\[
E^p:= \{f\in L^p_\uloc: \norm{f}_{L^p(B(x_0,1))} \to 0, \quad\text{as }|x_0|\to \infty \}, 
\]
and
\[
G^p(t_0,t) :=\{u \in U^{p,p}(t_0,t):  \ \norm{u}_{L^p([t_0,t]\times B(x_0,1))} \to 0, \quad\text{as } |x_0|\to \infty
\}.
\]
We let $L^p_{\uloc,\si}$,  $E^p_\si$ and $G^p_\si(t_0,t)$ denote divergence-free vector fields with components in $L^p_{\uloc}$, $E^p$ and $G^p(t_0,t)$, respectively.

The space $E^p$, $1\leq p<\infty$, can be characterized as $\overline{C_{c}^\infty(\R^3)}^{L^p_\uloc}$.
The analogous statement for $E^p_\si$ is true.
\begin{lemma}\textup{(\cite[Appendix]{KS})}\label{decomp.Ep}
Suppose that $f\in E^p_\si$ for some $1\leq p<\infty$. Then, for any $\ep>0$, we can find $f^\ep \in C_{c,\si}^\infty(\R^3)$ such that
\[
\norm{f-f^\ep}_{L^p_\uloc}<\ep. 
\]
\end{lemma}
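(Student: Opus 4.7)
The plan is to approximate $f \in E^p_\si$ via truncation, divergence correction, and mollification. Fix a smooth cutoff $\chi_R$ with $\chi_R \equiv 1$ on $B_R$, $\supp \chi_R \subset B_{2R}$, $\abs{\na \chi_R} \le C/R$, and set $\tilde f_R = \chi_R f$. Writing $\eta(r) := \sup_{|x_0|\ge r} \norm{f}_{L^p(B(x_0,1))}$, the decay of $f \in E^p$ gives $\norm{(1-\chi_R) f}_{L^p_\uloc} \le \eta(R-1) \to 0$ as $R \to \infty$. However, $\tilde f_R$ is not divergence-free: $\div \tilde f_R = \na \chi_R \cdot f =: h_R$ is supported in the annulus $A_R = B_{2R} \setminus B_R$ with $\int h_R \, dx = 0$ (since $\div f = 0$ in the distributional sense).

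The main step is to construct $w_R$ with $\div w_R = h_R$, $\supp w_R \subset A_R$, and $\norm{w_R}_{L^p_\uloc} \to 0$. Applying Bogovskii's operator on the whole annulus $A_R$ only gives $\norm{w_R}_{L^p(A_R)} \lec R^{3/p} \eta(R)$, which need not vanish for slowly decaying $\eta$. I would instead use a localized construction: partition $A_R$ into unit cubes $\{Q_i\}$ and decompose $h_R = \sum_i h_R|_{Q_i}$, with each piece having mean $m_i := \int_{Q_i} h_R$ of size $|m_i| \lec \eta(R)/R$ and $\sum_i m_i = 0$. On each cube I would solve a Bogovskii problem for the mean-zero part $h_R|_{Q_i} - m_i \rho_i$ (where $\rho_i$ is a fixed unit-mass bump in $Q_i$), and then stitch the pieces together via transfer fields $\xi_{ij}$ supported on adjacent cube-pairs satisfying $\div \xi_{ij} = \rho_j - \rho_i$, with coefficients $a_{ij}$ solving the discrete divergence equation $\sum_{j \sim i} a_{ij} = m_i$ on the cube-graph of $A_R$. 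A max-flow estimate on this annular graph (cross-section $\sim R^2$, total source mass $\sim R^2 \eta(R)$) yields $|a_{ij}| \lec \eta(R)$, so that only $O(1)$ local corrections affect any unit ball and $\norm{w_R}_{L^p_\uloc} \lec \eta(R) \to 0$.

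Finally, $g_R := \tilde f_R - w_R$ is divergence-free and compactly supported, hence in $L^p(\R^3)$. Mollifying $g_R$ at scale $\delta$ yields $f^\ep \in C^\infty_{c,\si}(\R^3)$ (convolution commutes with $\div$) with $\norm{g_R - f^\ep}_{L^p_\uloc} \le \norm{g_R - f^\ep}_{L^p(\R^3)} \to 0$ as $\delta \to 0$. Choosing $R$ large and then $\delta$ small delivers $\norm{f - f^\ep}_{L^p_\uloc} < \ep$. The main obstacle is the second step: the naive Bogovskii on the full annulus yields an unhelpful $R^{3/p} \eta(R)$ bound, so the cube-decomposition with graph-flow chaining is the essential ingredient providing the uniform local control needed in the $L^p_\uloc$-norm.
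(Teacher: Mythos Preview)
The paper does not give its own proof of this lemma; it is stated with a bare citation to the Appendix of Kikuchi--Seregin, so there is no in-paper argument to compare against directly.

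Your architecture (truncate, correct the divergence locally, mollify) is sound, and you have correctly isolated the real difficulty: a single Bogovskii operator on the whole annulus $A_R$ only controls the global $L^p(A_R)$ norm by $R^{3/p}\eta(R)$, which says nothing about $L^p_\uloc$. The cube-by-cube decomposition with a discrete flow redistributing the means $m_i$ is the right remedy, and your claimed bound $|a_{ij}|\lesssim \eta(R)$ is true. The clean way to justify it is not a max-flow/min-cut heuristic (which only gives lower bounds on the necessary edge capacity, not an upper bound on a specific flow) but a direct construction: on a $d$-dimensional grid of side $N$ with zero-sum sources bounded by $M$, build a flow with edge values $\lesssim NM$ by first equalising slice-sums via a flow spread uniformly across all transverse columns (contributing $\lesssim NM$ per edge), then recursing on each slice. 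With $N\sim R$ and $M=\eta(R)/R$ this yields $|a_{ij}|\lesssim \eta(R)$. The thick annulus is not a cube, but it decomposes into $O(1)$ pieces each bi-Lipschitz to a cube of side $\sim R$, and the interface masses between pieces are again $O(R^2\cdot \eta(R)/R)$ spread over $O(R^2)$ boundary cubes. You should spell this out rather than invoke max-flow.

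Two minor points to make explicit: the assembled $w_R$ has only $O(1)$ of the local Bogovskii pieces and transfer fields $\xi_{ij}$ overlapping any fixed unit ball, which is what gives $\norm{w_R}_{L^p_\uloc}\lesssim\eta(R)$; and the support of $w_R$ is a slight fattening of $A_R$, not $A_R$ itself, which is harmless. The final mollification step is fine as written since $g_R\in L^p(\R^3)$.
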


\subsection{Heat and Oseen kernels on $L^q_\uloc$}

Now, we study the operators $e^{t\De}$ and $e^{t\De}\mathbb{P}\na \cdot$ on $L^q_\uloc$. Here $\mathbb{P}$ denotes the Helmholtz projection in $\R^3$. Both are defined as convolution operators
\[
e^{t\De} f = H_t \ast f, \quad\text{and}\quad
e^{t\De}\mathbb{P}_{ij}\pa_kF_{jk} = \pa_k S_{ij}\ast F_{jk}, 
\]
where $H_t$ and $S_{ij}$ are the heat kernel and the Oseen tensor, respectively,
\EQN{
H_t(x) = \frac 1{\sqrt{4\pi t}^3} \exp\left(-\frac {|x|^2}{4t}\right),
}
and
\[
S_{ij}(x,t) = H_t(x)\de_{ij}  + \frac 1{4\pi} \frac{\pa^2}{\pa_{x_i}\pa_{x_j}} \int_{\R^3} \frac{H_t(y)}{|x-y|}dy. 
\]
In this note, we use $(\div F)_i = (\na \cdot F)_i = \pa_j F_{ji}$. Note that the Oseen tensor satisfies the following pointwise estimates
\EQ{\label{pt.est.oseen}
|\na_{x}^l \pa_t^k S(x,t)| \leq C_{k,l} (|x|+\sqrt{t})^{-3-l-2k}.
}

We have the following estimates.

\begin{lemma}[Remark 3.2 in \cite{MT}]\label{lemma23} 
For $1\leq q\leq p\leq \infty$, the following holds. For any vector field $f$ and any 2-tensor $F$ in $\R^3$,
\[
\norm{\pa^{\al}_t \pa^{\be}_x e^{t\De}f}_{L^p_\uloc}
\lec \frac 1{t^{|\al|+\frac{|\be|}{2}}}\left(1+ \frac 1{t^{\frac 32\left(\frac 1q-\frac 1p\right)}}\right)\norm{f}_{L^q_\uloc},
\]

\[
\norm{\pa^{\al}_t \pa^{\be}_x e^{t\De}\mathbb{P}\na \cdot F}_{L^p_\uloc}
\lec \frac 1{t^{|\al|+\frac{|\be|}{2}+\frac 12}}\left(1+ \frac 1{t^{\frac 32\left(\frac 1q-\frac 1p\right)}}\right)\norm{F}_{L^q_\uloc}.
\]
\end{lemma}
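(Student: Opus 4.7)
The plan is to reduce both estimates to pointwise bounds on the relevant convolution kernels combined with a dyadic decomposition of $\R^3$ adapted to the $L^p_\uloc$ norm. I would fix $x_0 \in \R^3$, control $\|\cdot\|_{L^p(B_1(x_0))}$, and take $\sup_{x_0}$ at the end.

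For the heat semigroup, the kernel $K_t := \pa_t^\al \pa_x^\be H_t$ obeys the Gaussian bound $|K_t(z)| \lec t^{-|\al|-|\be|/2 - 3/2} e^{-c|z|^2/t}$. I would split $\R^3 = A_0 \cup \bigcup_{j \ge 1} A_j$ with $A_0 = B_2(x_0)$ and $A_j$ the dyadic annulus at scale $2^j$ around $x_0$. On $A_0$, Young's inequality with $1/r = 1 + 1/p - 1/q$ gives, after rescaling $z = \sqrt{t}\, w$, the bound $\|K_t * (f\mathbf{1}_{A_0})\|_{L^p(B_1(x_0))} \lec t^{-|\al|-|\be|/2 - (3/2)(1/q-1/p)} \|f\|_{L^q_\uloc}$. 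On each far annulus $A_j$, $j \ge 2$, I would use $|x - y| \gtrsim 2^j$ for $x \in B_1(x_0)$, $y \in A_j$, together with the covering estimate $\|f\|_{L^1(A_j)} \lec 2^{3j} \|f\|_{L^q_\uloc}$ (H\"older on $\sim 2^{3j}$ unit balls), to obtain a contribution of size $t^{-|\al|-|\be|/2 - 3/2} 2^{3j} e^{-c4^j/t} \|f\|_{L^q_\uloc}$. Summing in $j$ and splitting into the two regimes $2^j \le \sqrt t$ and $2^j > \sqrt t$ produces the clean $t^{-|\al|-|\be|/2}$ term, i.e.\ the constant $1$ in the factor $(1 + t^{-(3/2)(1/q-1/p)})$.

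For the Oseen estimate I would run the same decomposition with kernel $\td K_t := \pa_t^\al \pa_x^\be \pa_k S_{ij}$, for which \eqref{pt.est.oseen} gives $|\td K_t(z)| \lec (|z|+\sqrt t)^{-4 - |\be| - 2|\al|}$. On $A_0$ the rescaling $z = \sqrt t\, w$ and the Young computation produce the extra $t^{-1/2}$ factor; on $A_j$, $j \ge 2$, I would use $|\td K_t(x-y)| \lec 2^{-(4+|\be|+2|\al|)j}$ when $2^j \ge \sqrt t$ (a geometric series summable in $j$) and $|\td K_t(x-y)| \lec t^{-|\al|-|\be|/2-2}$ when $2^j \le \sqrt t$ (yielding $t^{-|\al|-|\be|/2-2} \cdot t^{3/2} = t^{-|\al|-|\be|/2-1/2}$ after summing $\sum_{2^j \le \sqrt t} 2^{3j} \sim t^{3/2}$).

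The main obstacle I expect is the bookkeeping in the dyadic sum for the Oseen case, where the merely algebraic decay of $\td K_t$ (in contrast to the Gaussian decay of $K_t$) forces a careful separation into the two size regimes before the contributions combine into the advertised $t^{-1/2}(1 + t^{-(3/2)(1/q-1/p)})$ scaling. Applicability of Young on $\R^3$ for the close piece is not a serious issue, since $4 + |\be| + 2|\al| \ge 4 > 3$ guarantees $\td K_t \in L^r(\R^3)$ for all Young exponents that appear, and no smallness on $1/q - 1/p$ is required. Taking $\sup_{x_0 \in \R^3}$ of the ball-wise bounds then converts everything to the claimed $L^p_\uloc$ estimates.
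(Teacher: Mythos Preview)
The paper does not prove this lemma; it is quoted from \cite{MT} without proof. Your argument---localize to a unit ball, split into a near piece handled by Young's inequality and far dyadic annuli handled by the pointwise kernel bounds together with the covering estimate $\|f\|_{L^1(A_j)} \lec 2^{3j}\|f\|_{L^q_\uloc}$---is the standard route and is correct; it is essentially what \cite{MT} does as well. One cosmetic slip: your far annuli should begin at $j \ge 1$ rather than $j \ge 2$, but the estimate on $A_1$ is identical since $|x-y| \ge 1$ already holds there.
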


Note $p=\infty$ is allowed, with $L^\infty_\uloc= L^\infty$.

\medskip

\begin{lemma}\label{lemma24} 
For any $T>0$, if $f\in L^2_\uloc$ and $F\in U^{2,2}_T$, then we have
\EQN{
\norm{e^{t\De}f}_{\cE_T}
\lec (1+T^\frac 12)\norm{f}_{L^2_\uloc},\\
\norm{\int_0^t e^{(t-s)\De}\mathbb{P}\na \cdot F(s)\, ds}_{\cE_T}\lec (1+T)\norm{F}_{U^{2,2}_T}.
}
\end{lemma}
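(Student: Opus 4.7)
For $u := e^{t\De}f$, I will control the two pieces of $\norm{u}_{\cE_T}$ separately. The $U^{\infty,2}_T$ part follows immediately from Lemma~\ref{lemma23} with $q=p=2$ and $\al=\be=0$, giving $\norm{u(t)}_{L^2_\uloc} \lec \norm{f}_{L^2_\uloc}$ uniformly in $t\ge 0$. For $\na u$, a direct application of Lemma~\ref{lemma23} leaves a $t^{-1/2}$ factor that is not square-integrable near $t=0$, so I switch to a localized energy estimate: fix $x_0 \in \R^3$, set $\phi(x) := \Phi(x - x_0)$ with $\Phi$ a smooth bump equal to $1$ on $B_1$ and supported in $B_{3/2}$, and test $\pa_t u - \De u = 0$ against $u\phi^2$. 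After absorbing the commutator via Cauchy--Schwarz and integrating in time, one arrives at
\[
\int u(T)^2\phi^2\, dx + \int_0^T\!\!\int \phi^2|\na u|^2\, dx\, dt \lec \int f^2\phi^2\, dx + \int_0^T \norm{u(t)}_{L^2(B_{3/2}(x_0))}^2\, dt \lec (1+T)\norm{f}_{L^2_\uloc}^2,
\]
where the last step uses the $U^{\infty,2}_T$ bound just proved. Taking the supremum in $x_0$ yields $\norm{\na u}_{U^{2,2}_T} \lec (1+T^{1/2})\norm{f}_{L^2_\uloc}$, completing the first estimate.

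For the second estimate, let $w(t) := \int_0^t e^{(t-s)\De}\mathbb{P}\na\cdot F(s)\, ds$. A naive localized energy method is awkward here: $w$ solves $\pa_t w - \De w + \na q = -\na\cdot F$ with $\De q = -\pa_i\pa_j F_{ji}$, so $q$ is a pair of Riesz transforms of $F$, which are not bounded on $L^2_\uloc$; a localized test function would leave an uncontrollable pressure contribution. My plan is to bypass this by working directly from the Oseen-tensor representation
\[
w_i(x,t) = -\int_0^t\!\!\int_{\R^3} \pa_k S_{ij}(x-y, t-s)\, F_{jk}(y,s)\, dy\, ds
\]
together with the pointwise bounds \eqref{pt.est.oseen}. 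Partition $\R^3 = \bigcup_{K\in\ZZ^3} Q_K$ into unit cubes centered at $x_K$, write $F^K := F\mathbf{1}_{Q_K}$ and $w^K$ for the corresponding piece of $w$, and for each $x_0 \in \R^3$ split the index set into \emph{near} cubes ($|x_0 - x_K| \le 3$, finitely many) and \emph{far} ones.

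For the near cubes, $F^K \in L^2((0,T);L^2(\R^3))$ with $\norm{F^K}_{L^2_{t,x}} \lec \norm{F}_{U^{2,2}_T}$, and the classical $L^2$ Stokes energy identity (test the equation for $w^K$ against $w^K$, using $\div w^K = 0$ to annihilate the pressure) yields $\norm{w^K}_{L^\infty_t L^2_x} + \norm{\na w^K}_{L^2_t L^2_x} \lec \norm{F}_{U^{2,2}_T}$; only finitely many such $K$ contribute. For the far cubes, $|x-y| \sim |x_0-x_K|$ for $x\in B_1(x_0)$ and $y\in Q_K$, so \eqref{pt.est.oseen} combined with a Cauchy--Schwarz in $y$ gives
\[
|w^K(x,t)| \lec \int_0^t \frac{\norm{F(\cdot,s)}_{L^2(Q_K)}\, ds}{(|x_0-x_K|+\sqrt{t-s})^4},\qquad |\na w^K(x,t)| \lec \int_0^t \frac{\norm{F(\cdot,s)}_{L^2(Q_K)}\, ds}{(|x_0-x_K|+\sqrt{t-s})^5}.
\]
Cauchy--Schwarz (or Young's inequality) in $s$ followed by explicit evaluation of the remaining kernel integrals (splitting according to $|x_0-x_K|\gtrless\sqrt T$) produces weights in $|x_0-x_K|$ and $T$ whose Minkowski sum over far $K$ converges, since $\sum_K |x_0-x_K|^{-4}$ and $\sum_K |x_0-x_K|^{-5}$ are both summable in $\R^3$; the aggregate is $\lec (1+T)\norm{F}_{U^{2,2}_T}$. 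Combining near and far contributions and taking the supremum in $x_0$ finishes the second estimate. The main obstacle is exactly the pressure nonlocality flagged above, which the Oseen-tensor route sidesteps cleanly at the price of the far-cube bookkeeping just outlined.
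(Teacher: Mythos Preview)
Your proof is correct. For the second estimate, your cube-partition route is essentially the paper's approach in different clothing: the paper decomposes $F = F\phi_{x_0} + F(1-\phi_{x_0})$ with a single smooth cutoff supported in $B_3(x_0)$, handles the near piece by the global $L^2$ Stokes energy identity, and bounds the far piece pointwise via \eqref{pt.est.oseen} together with a dyadic-shell sum. Your unit-cube partition with a near/far index split is a discrete version of the same decomposition, and your observation that the pressure obstructs a direct localized energy argument is exactly why both you and the paper go through the Oseen tensor here.

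For the first estimate, however, your approach is genuinely different and arguably cleaner. The paper again uses the near/far decomposition $f = f_1 + f_2$: global $L^2$ heat-equation energy for $f_1$, and pointwise heat-kernel decay plus dyadic shells for $\na e^{t\De}f_2$ restricted to $B_1(x_0)$. You instead obtain the $U^{\infty,2}_T$ bound directly from Lemma~\ref{lemma23} and then run a Caccioppoli-type local energy identity for the heat equation, which works precisely because there is no pressure term to worry about; this bypasses the decomposition entirely. The trade-off is that your method feeds the $U^{\infty,2}_T$ bound into the gradient estimate, whereas the paper's decomposition treats both parts of $\norm{\cdot}_{\cE_T}$ in parallel and makes the far-field smoothing of the heat kernel more visible.
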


Recall $\norm{u}_{\cE_T} = \norm{u}_{U^{\infty,2}_T} + \norm{\na u}_{U^{2,2}_T}$. Similar estimates can be found in the proof
of \cite[Theorem 14.1]{LR2}.
We give a slightly revised proof here for completeness.

\begin{proof} Fix $x_0\in \R^3$ and let $\phi_{x_0}(x) = \Phi\left(\frac{x-x_0}2\right)$. We decompose $f$ and $F$ as 
\[
f = f\phi_{x_0}+ f(1-\phi_{x_0}) = f_1 + f_2
\]
and
\[
F = F\phi_{x_0}+ F(1-\phi_{x_0}) = F_1 + F_2.
\]

Since $f_1\in L^2(\R^3)$ and $F_1\in L^2(0,T;L^2(\R^3))$, by the usual energy estimates for the heat equation and the Stokes system, we get
\EQ{\label{0708-1}
\norm{e^{t\De}f_1}_{\cE_T} \lec \norm{f_1}_{2} \lec\norm{f}_{L^2_\uloc}
}
and
\EQ{\label{0709-1}
\norm{\int_0^t e^{(t-s)\De}\mathbb{P}\na \cdot F_1(s) ds}_{\cE_T}\lec
\norm{F_1}_{L^2(0,T;L^2(\R^3))} \lec \norm{F}_{U^{2,2}_T}.
}

On the other hand, by Lemma \ref{lemma23},
\[
\norm{e^{t\De}f_2}_{U^{\infty,2}_T}=\norm{e^{t\De}f_2}_{L^\infty(0,T;L^2_\uloc)} \lec \norm{f_2}_{L^2_\uloc} \lec \norm{f}_{L^2_\uloc}.
\]
Together with \eqref{0708-1}, we get 
\EQ{\label{0708-2}
\norm{e^{t\De}f}_{U^{\infty,2}_T}\lec \norm{f}_{L^2_\uloc}.
}
(This also follows from Lemma \ref{lemma23}.)
By the heat kernel estimates,
\EQN{
\norm{\na e^{t\De}f_2}_{L^2((0,T)\times B(x_0,1))}
&\lec T^{\frac 12}\norm{\na e^{t\De}f_2}_{L^\infty((0,T)\times B(x_0,1))} \\
&\lec T^{\frac 12}\int_{B(x_0,2)^c}\frac 1{|x_0-y|^4} |f_2(y)|dy\\
&\le T^{\frac 12}\sum_{k=1}^\infty \int_{B(x_0,2^{k+1})\setminus B(x_0,2^{k})}\frac 1{|x_0-y|^4} |f(y)|dy\\
&\lec T^{\frac 12}\sum_{k=1}^\infty \frac 1{2^{4k}} \int_{B(x_0,2^{k+1})} |f(y)|dy.
}
We may cover $B(x_0,2^{k+1})$ by $\bigcup_{j=1}^{J_k} B(x_j^k,1)$ with 
$J_k$ bounded by $C_0 2^{3k}$ for some constant $C_0>0$. Then
\[
\norm{\na e^{t\De}f_2}_{L^2((0,T)\times B(x_0,1))}\lec  
T^{\frac 12}\sum_{k=1}^\infty \frac 1{2^{4k}} \sum_{j=1}^{J_k} \int_{B(x_j^k,1)} |f(y)|dy\\
\lec T^{\frac 12} \norm{f}_{L^2_\uloc} .
\] 
Together with \eqref{0708-1}, we get
\[
\norm{\na e^{t\De}f}_{L^2((0,T)\times B(x_0,1))}\lec (1+T^{\frac 12}) \norm{f}_{L^2_\uloc}.
\]
Taking supremum in $x_0$, we obtain
\[
\norm{\na e^{t\De}f}_{U^{2,2}_T} \lec (1+T^{\frac 12}) \norm{f}_{L^2_\uloc}.
\]
This and \eqref{0708-2} show the first bound of the lemma,
$\norm{e^{t\De}f}_{\cE_T}\lec (1+T^\frac 12)\norm{f}_{L^2_\uloc}$.

Denote $\Psi F (t) = \int_0^t e^{(t-s)\De}\mathbb{P}\na \cdot F(s) ds$. By the pointwise estimates \eqref{pt.est.oseen} for Oseen tensor, we have
\EQN{
\norm{\Psi F_2}_{L^\infty(0,T;L^2( B(x_0,1)))}
&\lec \int_0^t\! \int_{B(x_0,2)^c} \frac 1{|x_0-y|^4} |F_2(y,s)|dy ds\\
&\leq \sum_{k=1}^\infty \frac 1{2^{4k}}\int_0^t\! \int_{B(x_0,2^{k+1})} |F(y,s)|dy ds\\
&\leq \sum_{k=1}^\infty \frac 1{2^{4k}}\sum_{j=1}^{J_k}\int_0^t\! \int_{B(x_j^k,1)} |F(y,s)|dy ds\\
&\lec \norm{F}_{U^{1,1}_T} \lec T^\frac 12\norm{F}_{U^{2,2}_T}
}
and 
\EQN{
\norm{\na \Psi F_2}_{L^2((0,T)\times B(x_0,1))}
&\lec  T^\frac 12 \norm{\na \Psi F_2}_{{L^\infty((0,T)\times B(x_0,1))}} \\
&\lec T^\frac 12\int_0^t\! \int_{B(x_0,2)^c} \frac 1{|x_0-y|^5} |F_2(y,s)|dy ds\\
&\leq T^\frac 12\sum_{k=1}^\infty \frac 1{2^{5k}}\int_0^t\! \int_{B(x_0,2^{k+1})} |F(y,s)|dy ds\\
&\lec T\norm{F}_{U^{2,2}_T}.
}
Combined with \eqref{0709-1}, we have
\[
\norm{\Psi F}_{L^\infty(0,T;L^2( B(x_0,1)))}\lec (1+T^\frac 12)\norm{F}_{U^{2,2}_T}
\]
and
\[
\norm{\na \Psi F}_{L^2((0,T)\times B(x_0,1))}
\lec (1+T)\norm{F}_{U^{2,2}_T}.
\]
Finally, we take suprema in $x_0$ to get
\[
\norm{\int_0^t e^{(t-s)\De}\mathbb{P}\na \cdot F(s) ds}_{\cE_T}
\lec (1+T) \norm{F}_{U^{2,2}_T}. 
\]
This is the second bound of the lemma.          
\end{proof}

\subsection{Heat kernel on $L^1_\uloc$ with decaying oscillation}

In this subsection, we investigate how the decaying oscillation assumption \eqref{ini.decay} on initial data affects the heat flow. Recall
\[
(u)_{Q_r(x)} = \fint_{Q_r(x)} u(y)\, dy = \frac 1{|Q_r(x)|} \int_{Q_r(x)} u(y)\, dy.  
\]

\begin{lemma}\label{from.ini}
Suppose that $u\in L^1_{\uloc}(\R^3)$ satisfies
\EQ{\label{0505-1}
\lim_{|x_0|\to \infty} \int_{Q_1(x_0)} |u - (u)_{Q_1(x_0)}|dx =0.
}
Then, for any $r>0$, we have
\EQ{\label{0505-2}
\lim_{|x_0|\to \infty} \int_{Q_r(x_0)} |u - (u)_{Q_r(x_0)}| dx=0,
}
and
\EQ{\label{0505-3}
\lim_{|x_0|\to \infty} \sup_{y\in \overline{Q_{2r}(x_0)}} |(u)_{Q_r(y)} - (u)_{Q_r(x_0)}|  =0.
}
\end{lemma}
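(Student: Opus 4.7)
The plan is to first establish \eqref{0505-2} for all $r>0$ and then derive \eqref{0505-3} as an easy consequence. Write $\om(x) := \int_{Q_1(x)}|u-(u)_{Q_1(x)}|\,dy$ so the hypothesis reads $\om(x)\to 0$ as $|x|\to\infty$, and set $M_y := (u)_{Q_1(y)}$.

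The crux is an elementary overlap bound: if $|y_1-y_2|_\infty\le 1$, then $E:=Q_1(y_1)\cap Q_1(y_2)$ is a rectangular box of volume $\prod_j(2-|y_{1,j}-y_{2,j}|)\ge 1$. From the identity $M_{y_i}-(u)_E=\fint_E(M_{y_i}-u)$ one gets
\[
|M_{y_i}-(u)_E|\le \frac{1}{|E|}\int_{Q_1(y_i)}|u-M_{y_i}|\,dx \le \om(y_i),\quad i=1,2,
\]
and hence $|M_{y_1}-M_{y_2}|\le \om(y_1)+\om(y_2)$. Concatenating finitely many (i.e.\ $O(R)$) such one-step moves along a chain from $y_1$ to $y_2$, I obtain, for any fixed $R>0$, that $|M_{y_1}-M_{y_2}|\to 0$ uniformly as $|y_1|,|y_2|\to\infty$ subject to $|y_1-y_2|_\infty\le R$ (since every chain point is at distance $\ge|x_0|-R$ from the origin, so $\om$ vanishes at each one).

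For \eqref{0505-2} with a given $r>0$, set $N=\lceil r\rceil$ and tile $Q_N(x_0)=\bigsqcup_{s\in S}Q_1(x_0+s)$ into $|S|=N^3$ unit cubes. Using the standard two-constant bound $\int_{Q_r(x_0)}|u-(u)_{Q_r(x_0)}|\,dx\le 2\int_{Q_r(x_0)}|u-M_{x_0}|\,dx$ and enlarging the domain to $Q_N(x_0)$, I split the resulting integral tile by tile:
\[
\int_{Q_N(x_0)}|u-M_{x_0}|\,dx \le \sum_{s\in S}\Bigl(\om(x_0+s) + |Q_1|\,|M_{x_0+s}-M_{x_0}|\Bigr).
\]
This is a finite sum (with only $N^3$ terms, depending on $r$); each $\om(x_0+s)\to 0$ by hypothesis, and each $|M_{x_0+s}-M_{x_0}|\to 0$ by the previous step applied with $R=N$.

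Finally, \eqref{0505-3} is immediate from \eqref{0505-2}: whenever $y\in\overline{Q_{2r}(x_0)}$, both $Q_r(y)$ and $Q_r(x_0)$ lie inside $Q_{3r}(x_0)$, so each of $(u)_{Q_r(y)}$ and $(u)_{Q_r(x_0)}$ differs from $(u)_{Q_{3r}(x_0)}$ by at most $\frac{|Q_{3r}|}{|Q_r|}\fint_{Q_{3r}(x_0)}|u-(u)_{Q_{3r}(x_0)}|\,dx = 27\,\fint_{Q_{3r}(x_0)}|u-(u)_{Q_{3r}(x_0)}|\,dx$, and this bound is uniform in $y$ and tends to zero by \eqref{0505-2} applied at scale $3r$. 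The only real step is the overlap-plus-chain argument in the second paragraph; the rest is routine bookkeeping.
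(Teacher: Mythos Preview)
Your proof is correct and rests on the same two ingredients as the paper's --- a chain argument controlling differences of cube averages, and a tiling of large cubes by unit ones --- but your organization is tighter. The paper bootstraps: it first gets \eqref{0505-2} for $r<1$ by containment in $Q_1$, then \eqref{0505-3} for $r\le 1/2$ via a containing cube (its (0505-7)), then tiles $Q_r$ by sub-unit cubes and chains those small-scale average differences to reach \eqref{0505-2} for $r>1$, and finally returns to \eqref{0505-3} for large $r$. You instead isolate the overlap bound $|M_{y_1}-M_{y_2}|\le\om(y_1)+\om(y_2)$ for $|y_1-y_2|_\infty\le 1$ up front (using the intersection $Q_1(y_1)\cap Q_1(y_2)$ rather than a containing cube), chain it once, and then handle \eqref{0505-2} for all $r$ simultaneously by tiling $Q_{\lceil r\rceil}(x_0)$ with unit cubes; \eqref{0505-3} then drops out of \eqref{0505-2} in one line. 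The overlap device is a nice touch that avoids the paper's case split between small and large $r$.
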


\begin{proof}
First note that $(u)_{Q_r(x)}$ is finite for any $x\in \R^3$ and $r>0$. Indeed, 
\[
|(u)_{Q_r(x)}| \le C_r\norm{u}_{L^1_\uloc}
\]
for a constant $C_r$ independent of $x$, $C_r < C$ for $r>1$, and $C_r \sim r^{-3}$ for $r \ll 1$.

Fix $x_0\in \R^3$ and $r>0$. For any constant $c\in \R$, we get
\EQN{
\fint_{Q_r(x_0)} |u - (u)_{Q_r(x_0)}| dx 
& \le \fint_{Q_r(x_0)} |u - c|  + |(u)_{Q_{r}(x_0)} - c| dx
\\
& = \fint_{Q_r(x_0)} |u - c| dx +  \abs{ \fint_{Q_r(x_0)} \bke{u - c} dx}
\\
& \le 2\fint_{Q_r(x_0)} |u - c| dx .
}

Then, for $Q_r =Q_r(x_1)\subset  Q_R(x_0)$, $R>r$, we get
\EQ{\label{0505-4}
\fint_{Q_r} |u - (u)_{Q_r}| dx \le 2\fint_{Q_r} |u - (u)_{Q_R(x_0)}|dx  \le  \frac{2R^3}{r^3} \fint_{Q_R(x_0)} |u - (u)_{Q_R(x_0)}| dx .
}

With $x_0=x_1$ and $R=1$ in \eqref{0505-4}, \eqref{0505-1} implies \eqref{0505-2} for all $r \in (0,1)$.

If $y \in \overline{Q_{2r}(x_0)}$, then
\[
Q_r(x_0) \cup Q_r(y) \subset Q_R(x_1), \quad x_1 = \frac 12(x_0+y), \quad R \ge2r.
\]
Thus, 
\EQ{\label{0505-7}
\abs{(u)_{Q_r(x_0)} - (u)_{Q_r(y)}} 
&\le \abs{ \fint_{Q_r(x_0)} u  -(u)_{Q_R(x_1)} dx} +\abs{ \fint_{Q_r(y)} u  -(u)_{Q_R(x_1)} dx}
\\
&\le  \fint_{Q_r(x_0)} |u - (u)_{Q_R(x_1)}| dx +  \fint_{Q_r(y)} |u - (u)_{Q_R(x_1)}| dx
\\
& \le  \frac{2R^3}{r^3} \fint_{Q_R(x_1)} |u - (u)_{Q_R(x_1)}| dx .
}
With $R=1$, this and \eqref{0505-1} imply \eqref{0505-3} for all $r \in (0,\frac12]$.

Now, for any $Q_r(x_0)$ with $r>1$, choose the smallest integer $N>2r$ and let $\rho = r/N< \frac 12$. We can find a set $S=S_{x_0,r}$ of $N^3$ points such that $\{ Q_\rho(z): z \in S \}$ are disjoint and
\[
\overline{  Q_r(x_0)} = \bigcup_{z \in S} \overline{Q_\rho(z)}.
\]
For any $z,z' \in S$, we can connect them by points $z_j$ in $S$, $j=0,1,\dots, N$, such that
$z_0=z$, $z_N=z'$, and
$z_{j} \in \overline{Q_{2\rho}(z_{j-1})}$, $j=1, \ldots , N$. We allow $z_{j+1}=z_j$ for some $j$. Thus 
\[
|(u)_{Q_\rho(z)} - (u)_{Q_\rho(z')}| \le \sum_{j=1}^{N}  |(u)_{Q_\rho(z_{j})} - (u)_{Q_\rho(z_{j-1})}|,  
\]
and hence
\EQ{\label{0505-5}
\max_{z,z'\in S_{x_0,r}}  |(u)_{Q_\rho(z)} - (u)_{Q_\rho(z')}| =o(1)\quad \text{as } |x_0| \to \infty
}
by \eqref{0505-3} as $\rho \in (0,\frac12)$.
We have
\EQN{
\fint_{Q_r(x_0)}& |u - (u)_{Q_r(x_0)}| dx 
\\
&= \sum_{z \in S} N^{-3} \fint_{Q_\rho(z)} |u - (u)_{Q_r(x_0)}| dx
\\
&\le \sum_{z \in S} N^{-3}\bke{ \fint_{Q_\rho(z)} |u - (u)_{Q_\rho(z)}| + |(u)_{Q_r(x_0)} - (u)_{Q_\rho(z)}| dx }
\\
&\le \bke{\sum_{z \in S} N^{-3} \fint_{Q_\rho(z)} |u - (u)_{Q_\rho(z)}| dx} + \max_{z,z'\in S}  |(u)_{Q_\rho(z)} - (u)_{Q_\rho(z')}| 
\\
&=o(1)\qquad \text{as } |x_0| \to \infty
}
by \eqref{0505-2} and \eqref{0505-5} for $\rho \in (0,\frac12)$. This shows 
\eqref{0505-2} for all $r>1$.

Finally, \eqref{0505-3} for $r>1/2$ follows from \eqref{0505-2} and \eqref{0505-7}.
\end{proof}

The following lemma says that decaying oscillation over \emph{cubes} is equivalent with decaying oscillation over \emph{balls}.
\begin{lemma}
Suppose $u \in L^1_\uloc$. Then $u$ satisfies \eqref{0505-1} if and only if
\EQ{\label{0607-1}
\lim_{|x_0|\to \infty} \int_{B_1(x_0)} |u - (u)_{B_1(x_0)}|dx =0.
}
\end{lemma}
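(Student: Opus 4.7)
The plan is to establish both implications via the set inclusions $B_1(x_0) \subset Q_1(x_0) \subset B_{\sqrt{3}}(x_0)$ combined with the elementary fact already used in the proof of Lemma~2.5: for any bounded measurable sets $D \subset D'$,
\[
\fint_D |u - (u)_D|\,dx \le \frac{2|D'|}{|D|}\fint_{D'} |u - (u)_{D'}|\,dx,
\]
which follows from $\fint_D |u - (u)_D|\,dx \le 2\fint_D |u - c|\,dx$ with the choice $c = (u)_{D'}$.

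For $\eqref{0505-1}\Rightarrow\eqref{0607-1}$ I would apply this estimate with $D = B_1(x_0)$ and $D' = Q_1(x_0)$; the ratio $|Q_1|/|B_1|$ is a universal constant, so the right-hand side tends to zero by \eqref{0505-1}, and after multiplication by $|B_1|$ we recover \eqref{0607-1}.

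The reverse direction is slightly more delicate, because $Q_1(x_0)$ is contained only in $B_{\sqrt{3}}(x_0)$, not in $B_1(x_0)$. Applying the estimate with $D = Q_1(x_0)$ and $D' = B_{\sqrt{3}}(x_0)$ reduces the task to upgrading \eqref{0607-1} from unit balls to balls of radius $\sqrt{3}$. I would do this by a covering-and-chaining argument parallel to that of Lemma~2.5: cover $B_{\sqrt{3}}(x_0)$ by a universal number $N$ of unit balls $B_1(x_0+w_j)$ with $|w_j|\le\sqrt{3}$, and chain the averages $(u)_{B_1(x_0+w_j)}$ through intermediate unit balls whose consecutive centers are at distance $\le 1$. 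For two such balls $B_1(y), B_1(y')$, the intersection $A := B_1(y)\cap B_1(y')$ satisfies $|A|\ge c_0|B_1|$ for a universal $c_0>0$, and the identity
\[
(u)_{B_1(y)} - (u)_A = \fint_A \bigl(u - (u)_{B_1(y)}\bigr)\,dx
\]
together with its counterpart at $y'$ yields
\[
|(u)_{B_1(y)} - (u)_{B_1(y')}| \le c_0^{-1}\Bigl(\fint_{B_1(y)}|u-(u)_{B_1(y)}|\,dx + \fint_{B_1(y')}|u-(u)_{B_1(y')}|\,dx\Bigr),
\]
both terms of which tend to zero as $|x_0|\to\infty$ by \eqref{0607-1}. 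Summing along the chain and over the $N$ covering balls gives the required decay over $B_{\sqrt{3}}(x_0)$.

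The only step requiring genuine work is this chaining of averages over overlapping unit balls, which plays the role of the cube-tiling step and the chain $z_j \in \overline{Q_{2\rho}(z_{j-1})}$ used in the proof of Lemma~2.5; everything else is elementary.
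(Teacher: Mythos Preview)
Your argument is correct. The forward implication is handled identically to the paper (via $B_1(x_0)\subset Q_1(x_0)$ and the estimate \eqref{0505-4}). For the reverse implication the two approaches diverge. You upgrade the ball-oscillation decay from radius $1$ to radius $\sqrt{3}$ by covering $B_{\sqrt{3}}(x_0)$ with finitely many unit balls and chaining the averages through overlapping unit balls, effectively reproving the mechanism of Lemma~\ref{from.ini} in the ball setting. The paper instead observes that $Q_\rho(x_0)\subset B_1(x_0)$ for $\rho=3^{-1/2}$, so \eqref{0607-1} immediately gives \eqref{0505-2} at the single scale $r=\rho$; rescaling $v(x)=u(\rho x)$ then makes $v$ satisfy \eqref{0505-1}, and one invokes Lemma~\ref{from.ini} for $v$ to recover \eqref{0505-2} for $u$ at the scale $r=1$. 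The paper's route is shorter because it recycles Lemma~\ref{from.ini} via a rescaling trick rather than repeating its proof, while your route is more self-contained and shows directly that the ball version of \eqref{0505-3} holds --- which is a natural byproduct one might want anyway.
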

\begin{proof}
Let $\rho = 3^{-1/2}$. We have $Q_\rho(x_0) \subset B_1(x_0) \subset Q_1(x_0)$. Similar to the proof of \eqref{0505-4}, we have
\[
\int_{B_1(x_0)} |u - (u)_{B_1(x_0)}|dx \le C\int_{Q_1(x_0)} |u - (u)_{Q_1(x_0)}|dx
\]
and hence \eqref{0607-1} follows from \eqref{0505-1}. Similarly, we also have
\[
\int_{Q_\rho(x_0)} |u - (u)_{Q_\rho(x_0)}|dx \le C
\int_{B_1(x_0)} |u - (u)_{B_1(x_0)}|dx 
\]
and hence \eqref{0505-2} for $r = \rho$ follows from \eqref{0607-1}. 
Then $v(x)=u(\rho x)$ satisfies  \eqref{0505-1}.
By Lemma \ref{from.ini}, $v$ satisfies \eqref{0505-2} for any $r>0$, and we get  \eqref{0505-1} for $u$.
\end{proof}

\begin{lemma}\label{decay.na.U}
Suppose $v_0\in L^1_\uloc$ and
\[
\int_{Q(x_0,1)}|v_0-(v_0)_{Q(x_0,1)}|\to 0, \quad\text{ as } |x_0|\to \infty.
\]
Let $V=e^{t\De}v_0$. 
Then $(\na V)(t_0)\in C_0(\R^3)$ for every $t_0>0$. Furthermore, for any $t_0>0$, we have
\EQ{\label{decay.naV}
\sup_{t>t_0}\norm{\na V(\cdot, t)}_{L^\infty(B(x_0,1))} \to 0, \quad\text{ as } |x_0|\to \infty. 
}
\end{lemma}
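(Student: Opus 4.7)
The plan is to exploit the fact that $\int_{\R^3}\nabla H_t\,dx=0$ to inject the oscillation decay of $v_0$ into the convolution defining $\nabla V$. Fix $t\ge t_0>0$ and $R\ge 1$ to be chosen, and set $c_x:=(v_0)_{B(x,R)}$. Then
\[
\nabla V(x,t)=\int_{\R^3}\nabla H_t(x-y)\,\bigl[v_0(y)-c_x\bigr]\,dy,
\]
and I split the integral at $|x-y|=R$ so that the near part is controlled by the oscillation of $v_0$ and the far part by the pointwise decay of $\nabla H_t$.

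For the near part, I use the uniform bound $\|\nabla H_t\|_{L^\infty}\lec t^{-2}\le t_0^{-2}$ (valid because we stay in $t\ge t_0$), obtaining
\[
\Bigl|\int_{|x-y|<R}\nabla H_t(x-y)\bigl[v_0(y)-c_x\bigr]\,dy\Bigr|\lec t_0^{-2}\int_{B(x,R)}|v_0(y)-(v_0)_{B(x,R)}|\,dy.
\]
By the ball-versus-cube equivalence established just before this lemma, together with \eqref{0505-2} of Lemma \ref{from.ini} applied at scale $R$, this quantity tends to $0$ as $|x|\to\infty$ for every fixed $R$, uniformly in $t\ge t_0$. For the far part, I use the pointwise estimate $|\nabla H_t(z)|\lec(|z|+\sqrt t)^{-4}\le|z|^{-4}$, which is uniform in $t>0$, and decompose $\{|z|\ge R\}$ into dyadic annuli $\{2^kR\le|z|<2^{k+1}R\}$, covering each enlarged ball by $O(2^{3k}R^3)$ unit balls to invoke the $L^1_\uloc$ control on $v_0$. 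A geometric summation then gives
\[
\int_{|x-y|\ge R}|\nabla H_t(x-y)|\bigl(|v_0(y)|+|c_x|\bigr)\,dy\lec R^{-1}\|v_0\|_{L^1_\uloc},
\]
where I also used $|c_x|\lec\|v_0\|_{L^1_\uloc}$, valid for $R\ge 1$.

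Given $\e>0$, I first pick $R$ so that the far-part bound is $<\e/2$; then for $x\in B(x_0,1)$ I note $B(x,R)\subset B(x_0,R+1)$, so the near-part bound is dominated by a quantity depending only on $x_0$ and $R$ that vanishes as $|x_0|\to\infty$. Since both estimates are uniform in $t\ge t_0$, this yields \eqref{decay.naV}. Continuity (actually smoothness) of $\nabla V(\cdot,t_0)$ follows routinely from the smoothness and polynomial decay of $\nabla H_{t_0}$ against $v_0\in L^1_\uloc$ via dominated convergence and differentiation under the integral; combined with the decay at infinity just proved, this gives $(\nabla V)(t_0)\in C_0(\R^3)$.

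The main obstacle is organizational rather than analytic: the oscillation decay \eqref{ini.decay} is qualitative without an explicit rate, so the cutoff scale $R$ must be fixed first from $\|v_0\|_{L^1_\uloc}$ and $\e$, and only afterwards may one send $|x_0|\to\infty$ to dispose of the near-field oscillation. The uniformity in $t\ge t_0$ hinges on the far-field bound being independent of $t$ and on the near-field constant $t_0^{-2}$ absorbing the only $t$-dependence once we stay away from $t=0$.
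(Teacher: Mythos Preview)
Your proof is correct and takes a genuinely simpler route than the paper's. The paper constructs an auxiliary function $v_1(x)=\sum_{k\in\ZZ^3}(v_0)_{Q_1(k)}\phi_k(x)$ by block-averaging $v_0$ against a partition of unity, shows that $\|v_0-v_1\|_{L^1(Q_1(k))}$ and $\sup_{Q_1(k)}|\nabla v_1|$ vanish as $|k|\to\infty$, and then splits $\nabla V$ into four pieces: a far-field part, a near-field part involving $v_0-v_1$ (controlled via Lemma~\ref{lemma23}), and a near-field part involving $v_1$ that is integrated by parts to land on $\nabla v_1$, plus a boundary term from the cutoff. You bypass all of this by using the cancellation $\int\nabla H_t=0$ to subtract the single constant $c_x=(v_0)_{B(x,R)}$, which immediately reduces the near-field contribution to the oscillation of $v_0$ over $B(x,R)$---exactly the quantity that Lemma~\ref{from.ini} (at scale $R$, via the ball/cube equivalence) says vanishes at infinity. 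Your approach is more elementary and avoids the partition-of-unity construction; the paper's approach, on the other hand, yields a smooth global approximation $v_1$ with decaying gradient, a structure that could in principle be reused elsewhere but is not needed for this lemma. One small point worth making explicit: your step ``$B(x,R)\subset B(x_0,R+1)$, so the near-part bound is dominated by a quantity depending only on $x_0$ and $R$'' is implicitly invoking $\int_{B(x,R)}|v_0-(v_0)_{B(x,R)}|\le 2\int_{B(x_0,R+1)}|v_0-(v_0)_{B(x_0,R+1)}|$, which is the same trick as in \eqref{0505-4}.
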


\begin{proof}
For $k \in \ZZ^3$, let $\Si_k$ denote the set of its neighbor integer points,
\[
\Si_k = \ZZ^3 \cap Q(k,1.01)\setminus \{ k\}.
\]
Let
\[
a_k = (v_0)_{Q_1(k)}, \quad b_k = \max _{ k' \in \Si_k} |a_{k'}-a_k|,
\quad
c_k =  \int_{ Q_1(k)} |v_0(x)-a_k| dx.
\]
By the assumption, $c_k\to 0$ as $|k|\to \infty$ and by Lemma \ref{from.ini}, $b_k\to 0$ as $|k|\to \infty$. 

Choose a nonnegative $\phi \in C^\infty_c(\R^3)$ with $\supp \phi \subset Q_1(0)$ and 
\[
\sum_{k \in \ZZ^3}  \phi_k (x)=1 \quad \forall x \in \R^3, \quad \phi_k(x) = \phi(x-k).
\]
Define 
\[
v_1(x) = \sum_{k \in \ZZ^3} a_k \phi_k(x).
\]
Since $|a_k|\lec \norm{v_0}_{L^1_\uloc}$, $v_1$ is in $L^\infty(\R^3)$. For $x\in Q_1(k)$, it can be written as
\[
v_1(x)  = a_k +  \sum_{k' \in \Si_k} (a_{k'} -a_k)\phi_{k'}(x).
\]
Thus
\EQ{\label{diff.v01}
\int_{ Q_1(k)} |v_0(x)-v_1(x)| dx &\le \int_{ Q_1(k)} |v_0(x)-a_k| dx + \sum_{k' \in \Si_k}  \int_{ Q_1(k)} |a_k  -a_{k'}|\phi_{k'}(x) dx 
\\
& \le c_k +C b_k ,
}
and
\EQ{\label{nb.v1}
\sup_{x \in Q_1(k)} |\nb v_1(x)| 
\le \sup_{x \in Q_1(k)}  \sum_{k' \in \Si_k} |a_{k'} -a_k| \cdot |\nb \phi_{k'}(x)|
\le Cb_{k} .
}

Let $\psi_R(x) = \Phi\left(\frac{x}{R}\right)$. %
We decompose
\EQN{
\nb V(x,t) &= \int \nb H_t(x-y) v_0(y) (1-\psi_R(x-y))dy 
\\
&\quad+ \int \nb H_t(x-y) [v_0(y)-v_1(y)] \psi_R(x-y) dy
\\
&\quad+ \int \nb H_t(x-y) v_1(y) \psi_R(x-y) dy = I_1+ I_2 +I_3.
}
By integration by parts, we can rewrite $I_3$,
\[
I_3 = \int  H_t(x-y) \nb v_1(y) \psi_R(x-y) dy
-\int  H_t(x-y) v_1(y) (\nb\psi_R)(x-y) dy = I_{31} + I_{32}.
\]

Fix $\ep>0$ and consider $t>t_0>0$. 
Since for any $t>0$ and $x\in \R^3$, we have
\EQN{
|I_1| &\lec \int_{B(x,R)^c} \frac {|x-y|^5}{t^{\frac 52}}e^{-\frac {|x-y|^2}{4t}} \frac 1{|x-y|^4}|v_0(y)| dy \\
&\lec \int_{B(x,R)^c} \frac 1{|x-y|^4}|v_0(y)| dy \lec \frac 1R\norm{v_0}_{L^1_\uloc},
}
and
\EQN{
|I_{32}| \lec \norm{H_t}_1\norm{v_1}_\infty \norm{\na \psi_R}_\infty 
\lec \frac 1R\norm{v_0}_{L^1_\uloc},
}
we can choose sufficiently large $R>0$ such that
\[
|I_1, I_{32}|<\ep.
\]

The integrands of both $I_2$ and $I_{31}$ are supported in  $|y-x|\leq 2R$. If $|x|>2\rho$ with $\rho >2R$ and 
$|y-x|\leq 2R$,  then 
$|y|\geq |x|-|x-y| > \rho$. Let 
\[
1_{>\rho}(y) = 1\quad \text{for} \quad |y|>\rho,\quad \text{and}\quad 1_{>\rho}(y) = 0\quad \text{for} \quad |y|\leq \rho.
\]
We have
\EQN{
|I_2| \leq \norm{ |\na H_t|\ast |v_0-v_1|1_{>\rho}}_{L^\infty(\R^3)}
\lec {t_0^{-\frac 12}}\left(1+{t_0^{-\frac 32}}\right)\norm{|v_0-v_1|1_{>\rho}}_{L^1_\uloc}
}
by Lemma \ref{lemma23}, and 
\[
|I_{31}| \leq \norm{e^{t\De}(|\na v_1|1_{>\rho})}_{L^\infty(\R^3)}
\lec \norm{|\na v_1|1_{>\rho}}_{L^\infty(\R^3)} .  
\]
If we take $\rho$ sufficiently large, 
by \eqref{diff.v01} and \eqref{nb.v1}, we have $|I_2|+ |I_{31}| \leq 2 \ep$.

Since for any $t>t_0$ and $\ep>0$, we can choose $\rho>0$ such that
\[
\sup_{t>t_0}\norm{\na V(\cdot,t)}_{L^\infty(B(0, 2\rho)^c)}<4\ep, 
\]
we get \eqref{decay.naV}.
\end{proof}

\section{Local existence}\label{loc.ex.sec}
In this section, we recall the definition of local energy solutions and prove their \emph{time-local} existence using a revised approximation scheme. Note that we do not assume spatial decay of initial data for the time-local existence.

As mentioned in the introduction, we follow the definition in Kikuchi-Seregin \cite{KS}. 

\begin{defn}[local energy solution]\label{les}
Let $v_0 \in L^2_\uloc$ with $\div v_0=0$.
A pair $(v,p)$ of functions is a local energy solution to the Navier-Stokes equations \eqref{NS}  with initial data $v_0$
in $\R^3\times (0,T)$, $0 <T< \infty$, if it satisfies the followings.
\begin{enumerate}[(i)]
\item $v\in \cE_{T}$, defined in \eqref{cE.def}, and $p \in L^\frac 32_\loc([0,T)\times \R^3)$.
\item $(v,p)$ solves the Navier-Stokes equations \eqref{NS} in the distributional sense.
\item For any compactly supported function $\ph\in L^2(\R^3)$, the function $\int_{\R^3} v(x,t)\cdot \ph(x)\, dx$ of time is continuous on $[0,T]$. Furthermore, for any compact set $K\subset \R^3$, 
\[
\norm{v(\cdot,t)-v_0}_{L^2(K)} \rightarrow 0, \quad \text{as }t\to 0^+.
\]

\item $(v,p)$ satisfies the local energy inequality (LEI) for any $t\in(0,T)$:
\EQ{\label{LEI}
\int_{\R^3} & |v|^2\xi (x,t)dx + 2\int_0^t\! \int_{\R^3}|\na v|^2\xi \,dxds\\
&\quad \leq \int_0^t\! \int_{\R^3} |v|^2(\pa_s\xi + \De \xi) + (|v|^2+2p)(v\cdot \na)\xi \,dxds,  
}
for all non-negative smooth functions $\xi\in C^\infty_c((0,T)\times\R^3)$. 

\item For each $x_0\in \R^3$, we can find $c_{x_0}\in L^\frac 32(0,T)$ such that
\EQ{\label{pressure.decomp}
p(x,t)=\hp_{x_0}(x,t)+c_{x_0}(t), 
\qquad\text{in } L^{\frac 32}(B(x_0, \tfrac32)\times(0,T)),
}
where
\EQ{\label{hp.def}
\hp_{x_0}(x,t) =& -\frac 13 |v(x,t)|^2 + \pv \int_{B(x_0,2)} K_{ij}(x-y)v_iv_j(y,t)dy\\
&+\int_{B(x_0,2)^c} (K_{ij}(x-y)-K_{ij}(x_0-y))v_iv_j(y,t)dy
}
for $K(x)=\frac 1{4\pi|x|}$ and $K_{ij} = \pa_{ij}K$. 
\end{enumerate}
We say the pair $(v,p)$ is a local energy solution to \eqref{NS} in $\R^3\times (0,\infty)$ if it is a local energy solution to \eqref{NS} in $\R^3\times (0,T)$ for all $0 <T< \infty$.\qed 
\end{defn}

For an initial data $v_0\in L^2_\uloc$ whose local kinetic energy is uniformly bounded, we reprove the local existence of a local energy solution of \cite[Chapt 32]{LR}.

\begin{theorem}[Local existence]\label{loc.ex}
Let $v_0\in L^2_\uloc$ with $\div v_0 =0$. If
\[
T\le \frac{\e_1}{1+\norm{v_0}_{L^2_\uloc}^4}
\]
for some small constant $\e_1>0$,
we can find a local energy solution $(v,p)$ on $\R^3\times (0,T)$ to Navier-Stokes equations \eqref{NS} for the initial data $v_0$, satisfying $\norm{v}_{\cE_T} \le C \norm{v_0}_{L^2_\uloc}$.
\end{theorem}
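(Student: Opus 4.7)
The plan is to construct, for each $\ep>0$, a smooth global-in-time solution $v^\ep$ of a regularized version of \eqref{NS}, derive local-energy bounds on a short interval $[0,T]$ that are uniform in $\ep$, and pass to the limit. The regularized system is
\EQN{
\pa_t v^\ep - \De v^\ep + (\cJ(v^\ep)\cdot\nb)(v^\ep\Phi_\ep) + \nb p^\ep = 0, \quad \div v^\ep = 0, \quad v^\ep|_{t=0}=v_0,
}
where $\Phi_\ep$ is the radial bump supported in $B(0,2\ep^{-1})$ mentioned in the introduction. Because $v^\ep\Phi_\ep$ has compact spatial support while $\cJ(v^\ep)$ is smooth and divergence-free, the pressure is globally determined by $p^\ep = \pa_i\pa_j(-\De)^{-1}[\cJ(v^\ep)_i(v^\ep\Phi_\ep)_j]\in L^2(\R^3)$. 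For fixed $\ep$, I solve the Duhamel form by Picard iteration in $\cE_{T_*}$ using Lemma \ref{lemma24}; the cut-off keeps the tensor in an $L^2$-based space, making the iteration contractive for $T_*(\ep)$ small, and continuation via an $\ep$-dependent global energy identity extends $v^\ep$ to $[0,\infty)$.

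For the uniform local-energy estimate, fix $x_0\in\R^3$ and let $\phi_{x_0}(x)=\Phi(x-x_0)$. Testing the regularized equation with $v^\ep\phi_{x_0}^2$ and integrating by parts (using $\div\cJ(v^\ep)=0$) yields
\EQN{
\pa_t \int |v^\ep|^2\phi_{x_0}^2 + 2\int|\nb v^\ep|^2\phi_{x_0}^2 \lec \int_{B_{3/2}(x_0)}\bke{1 + |\cJ(v^\ep)|}|v^\ep|^2 + \int p^\ep\, v^\ep\cdot\nb\phi_{x_0}^2.
}
Writing $p^\ep = \hp^\ep_{x_0}+c^\ep_{x_0}(t)$ as in Definition \ref{les}(v) applied to the modified tensor $\cJ(v^\ep)\otimes(v^\ep\Phi_\ep)$, the constant $c^\ep_{x_0}$ integrates to zero against $v^\ep\cdot\nb\phi_{x_0}^2$ by $\div v^\ep=0$ and the compact support of $\phi_{x_0}^2$. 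A Calderón-Zygmund estimate on $B(x_0,2)$ together with the summable far-field tail gives $\norm{\hp^\ep_{x_0}}_{L^{3/2}(B_{3/2}(x_0))}\lec \norm{v^\ep}_{L^3_\uloc}^2$. Interpolating $\norm{v^\ep}_{L^3(B)}\lec \norm{v^\ep}_{L^2(B)}^{1/2}\norm{v^\ep}_{H^1(B)}^{1/2}$, absorbing the $H^1$-piece by Young, and taking $\sup_{x_0}$ produces a differential inequality $\pa_t A^\ep + B^\ep \lec A^\ep + (A^\ep)^3$ with $A^\ep(t) = \norm{v^\ep(t)}_{L^2_\uloc}^2$ and $B^\ep(t) = \norm{\nb v^\ep(t)}_{U^{2,2}}^2$. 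For $T\lec (1+\norm{v_0}_{L^2_\uloc}^2)^{-1}$, this gives the uniform bound $\norm{v^\ep}_{\cE_T}\lec \norm{v_0}_{L^2_\uloc}$.

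The uniform $\cE_T$ bound together with the regularized equation (which places $\pa_t v^\ep$ in $H^{-1}_\loc$) and Aubin-Lions yields strong convergence $v^\ep\to v$ in $L^2_\loc(\R^3\times(0,T))$ along a subsequence. This allows passing to the limit in the weak formulation of \eqref{NS}, in the pressure decomposition defining $\hp_{x_0}$, and in the local energy inequality \eqref{LEI}; the initial condition and weak continuity in $L^2_\loc$ at $t=0$ are standard consequences of the $\cE_T$ bound and the weak form. The main obstacle is the uniform-in-$\ep$ treatment of the local pressure $\hp^\ep_{x_0}$: the relevant tensor is $\cJ(v^\ep)\otimes(v^\ep\Phi_\ep)$ rather than $v^\ep\otimes v^\ep$, and one must verify that both the Calderón-Zygmund bound on $B(x_0,2)$ and the summable far-field tail survive this change uniformly in $\ep$. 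This is exactly the point of the revised scheme: keeping one smooth factor $\cJ(v^\ep)$ and one compactly supported factor $v^\ep\Phi_\ep$ preserves the form of the decomposition, so the limit of $\hp^\ep_{x_0}$ is precisely $\hp_{x_0}$ of Definition \ref{les}(v).
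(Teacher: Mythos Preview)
Your outline follows the paper's strategy closely: the same regularized system with the localization factor $\Phi_\ep$, Picard iteration in $\cE_{T_*}$ via Lemma~\ref{lemma24}, the local energy estimate with the pressure decomposition $p^\ep=\hp^\ep_{x_0}+c^\ep_{x_0}(t)$, and compactness to pass to the limit. There is, however, one genuine gap.

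You assert that for fixed $\ep$ ``continuation via an $\ep$-dependent global energy identity extends $v^\ep$ to $[0,\infty)$''. No such identity is available. The data $v_0$ lies only in $L^2_\uloc$, not in $L^2(\R^3)$ or even $E^2$, so $\norm{v^\ep(t)}_{L^2(\R^3)}=\infty$ for all $t$; and working instead with the finite-energy Duhamel part $v^\ep-e^{t\De}v_0$ does not help, since the cutoff $\Phi_\ep$ destroys the usual cancellation in the nonlinearity and leaves an uncontrolled cubic term in the $L^2$ balance. You therefore cannot first extend $v^\ep$ globally and only afterwards prove the uniform local-energy bound; the two steps must be interleaved. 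The paper's route is: (i) Picard gives existence on $[0,T_*(\ep)]$; (ii) the local energy computation in your second paragraph is valid on \emph{any} interval $[0,T]$ with $T\le T_0=\e_1(1+\norm{v_0}_{L^2_\uloc}^2)^{-1}$ on which $v^\ep$ already exists, and yields the $\ep$-independent a priori bound $\norm{v^\ep}_{\cE_T}\le B=C\norm{v_0}_{L^2_\uloc}$; (iii) since $B$ depends only on $\norm{v_0}_{L^2_\uloc}$, one restarts the Picard iteration from some $\tau\in(\tfrac34 T_*,T_*)$ with $\norm{v^\ep(\tau)}_{L^2_\uloc}\le B$, gaining a fixed increment $\ge\tfrac34 S(\ep,B)$ each time, and reaches $[0,T_0]$ after finitely many steps. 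Replace your global-energy sentence with this bootstrap and the remainder of your outline goes through essentially as written.
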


Note that we do not assume $v_0 \in E^2$, i.e., we do not assume spatial decay of $v_0$. Although the local existence theorem is proved in \cite[Chapt 32]{LR}, a few details are missing there, in particular those related to the pressure. These details are given in \cite{KS} for the case $v_0 \in E^2$. Here we treat the general case $v_0 \in L^2_\uloc$.

Recall the definitions of $\cJ(\cdot)$ and $\Phi$ in Section \ref{pre} and let $\Phi_\ep(x)= \Phi (\ep x )$, $\ep>0$. To prove Theorem \ref{loc.ex}, we consider approximate solutions $(v^\ep, p^\ep)$ to the localized-mollified Navier-Stokes equations
\EQ{\label{reg.NS}
\begin{cases}
\pa_t v^\ep - \De v^\ep + (\cJ(v^\ep)\cdot\na) (v^\ep\Phi_\ep) +\na p^\ep =0\\
\div v^\ep =0 \\
v^\ep|_{t=0} = v_0 
\end{cases}
}
in $\R^3 \times (0,T)$.

Since $v_0\in L^2_\uloc$ has no decay, it cannot be approximated by $L^2$-functions, as was done in \cite{KS} when $v_0 \in E^2$. Hence
the approximation solution $v^\ep$ cannot be constructed in the energy class $L^\infty(0,T; L^2(\R^3)) \cap L^2(0,T; \dot H^1(\R^3))$, and has to be constructed in $\cE_T$ directly.

Compared to \cite{LR,KS}, our mollified nonlinearity has an additional localization factor $\Phi_\ep$. It makes the decay of the Duhamel term apparent when the approximation solutions have no decay.

We first construct a mild solution $v^\ep$ of \eqref{reg.NS} in $\cE_T$. %

\begin{lemma}\label{mild.sol}
For each $0<\ep<1$ and $v_0$ with $\norm{v_0}_{L^2_\uloc}\leq B$, if $0<T< \min(1,c\ep^3B^{-2})$, we can find a unique solution $v=v^\ep$ to the integral form of \eqref{reg.NS}
\EQ{\label{op.rNS}
v(t) = e^{t\De}v_0 - \int_0^t e^{(t-s)\De} \mathbb{P} \na \cdot (\cJ(v) \otimes v \Phi_\ep)(s) ds
}
satisfying
\[
\norm{v}_{\cE_T} \leq 2C_0B,
\]
where $c>0$ and $C_0>1$ are absolute constants and $(a\otimes b)_{jk} = a_jb_k$. 
\end{lemma}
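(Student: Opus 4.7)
The plan is to solve \eqref{op.rNS} by the Banach contraction principle, taking as the underlying complete metric space the closed ball
\[
X_T = \bket{u \in \cE_T : \norm{u}_{\cE_T} \le 2C_0 B}
\]
equipped with the $\cE_T$ norm, where $C_0$ is chosen to absorb the linear bound. Define the map
\[
\Psi(v)(t) = e^{t\De} v_0 - \int_0^t e^{(t-s)\De}\mathbb{P} \na \cdot (\cJ(v) \otimes v \Phi_\ep)(s)\, ds.
\]
Since $\mathbb{P}$ projects onto divergence-free fields and $v_0$ is divergence-free, $\Psi(v)$ automatically lies in the divergence-free part of $\cE_T$, so only norm bounds need to be verified.

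For the linear term, Lemma \ref{lemma24} with $T \le 1$ gives $\norm{e^{t\De}v_0}_{\cE_T} \le C_0 B$ for an absolute constant $C_0$. For the Duhamel term, I need to bound $F := \cJ(v)\otimes v\Phi_\ep$ in $U^{2,2}_T$. The mollification gives the key gain: by Young/Cauchy-Schwarz in the definition of $\cJ$ and $\ep <1$,
\[
\norm{\cJ(v)(\cdot,t)}_{L^\infty} \le \norm{\eta_\ep}_{L^2} \norm{v(\cdot,t)}_{L^2_\uloc} \lec \ep^{-3/2} \norm{v(\cdot,t)}_{L^2_\uloc}.
\]
Hence, using $\abs{\Phi_\ep}\le 1$ and $\norm{v}_{U^{2,2}_T} \le T^{1/2} \norm{v}_{U^{\infty,2}_T}$,
\[
\norm{F}_{U^{2,2}_T} \le \norm{\cJ(v)}_{L^\infty_{x,t}} \norm{v}_{U^{2,2}_T} \lec \ep^{-3/2} T^{1/2} \norm{v}_{\cE_T}^2.
\]
Applying the second bound of Lemma \ref{lemma24} and $T\le 1$,
\[
\norm{\Psi(v)}_{\cE_T} \le C_0 B + C \ep^{-3/2} T^{1/2} (2C_0 B)^2.
\]
Requiring $4 C C_0 \ep^{-3/2} T^{1/2} B \le 1$, i.e.\ $T \le c \ep^3 B^{-2}$ for a suitable absolute $c>0$, yields $\norm{\Psi(v)}_{\cE_T}\le 2C_0 B$, so $\Psi : X_T \to X_T$.

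For contraction, write
\[
\cJ(v_1)\otimes v_1 - \cJ(v_2)\otimes v_2 = \cJ(v_1-v_2)\otimes v_1 + \cJ(v_2) \otimes (v_1-v_2),
\]
and apply the same two estimates to get
\[
\norm{\Psi(v_1)-\Psi(v_2)}_{\cE_T} \lec \ep^{-3/2} T^{1/2} \bke{\norm{v_1}_{\cE_T}+\norm{v_2}_{\cE_T}} \norm{v_1-v_2}_{\cE_T}.
\]
Shrinking $c$ further if necessary, this factor is $\le \frac12$, so $\Psi$ is a strict contraction on $X_T$ and the unique fixed point provides the desired solution. Uniqueness in the ball $X_T$ is automatic; uniqueness in all of $\cE_T$ is not claimed.

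I don't expect any real obstacle: the only non-routine observation is the $\ep^{-3/2}$ gain from the mollifier, which is what allows $L^2_\uloc$ data (with no $L^\infty$ control) to be handled. The cutoff $\Phi_\ep$ is not actually used to close the fixed-point argument here (it is bounded by $1$); its role is to guarantee compact spatial support of the nonlinearity, which matters later for the pressure decomposition and Duhamel decay, not for this lemma.
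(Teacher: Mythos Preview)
Your proof is correct and follows essentially the same route as the paper's: both apply the Banach contraction principle in $\cE_T$, using Lemma~\ref{lemma24} for the linear and Duhamel terms and the key mollifier bound $\norm{\cJ(v)}_{L^\infty}\lec \ep^{-3/2}\norm{v}_{L^2_\uloc}$ to estimate the nonlinearity in $U^{2,2}_T$. Your remark that $\Phi_\ep$ plays no role in closing the fixed-point argument here, and your care in stating uniqueness only within the ball, are both accurate refinements of the paper's presentation.
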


\begin{proof} %
Let $\Psi (v)$ be the map defined by the right side of \eqref{op.rNS} for $v \in \cE_T$. By  Lemma \ref{lemma24} and $T\le 1$,
\EQN{
\norm{\Psi (v)}_{\cE_T} 
&\lec \norm{v_0}_{L^2_\uloc}+ \norm{\cJ(v)\otimes v\Phi_\ep}_{U^{2,2}_T}
\\
&\lec \norm{v_0}_{L^2_\uloc}+ \norm{\cJ(v)}_{L^\infty(0,T;L^\infty(\R^3))}\norm{v}_{U^{2,2}_T}
\\
&\lec \norm{v_0}_{L^2_\uloc}+ \ep^{-\frac 32} \sqrt{T}\norm{v}_{U^{\I,2}_T}^2.
}
Thus
\[
\norm{\Psi (v)}_{\cE_T} \leq C_0\norm{v_0}_{L^2_\uloc} + C_1\ep^{-\frac 32}\sqrt{T} \norm{v}_{\cE_T}^2,
\]
for some constants $C_0,C_1>0$.
Similarly, for $v,u \in \cE_T$,
\[
\norm{\Psi (v)-\Psi (u)}_{\cE_T} \le 
C_1\ep^{-\frac 32}\sqrt{T} \bke{\norm{v}_{\cE_T}+ \norm{u}_{\cE_T}}
\norm{v-u}_{\cE_T}.
\]

By the Picard contraction theorem, if $T$ satisfies
\[
T< \frac {\ep^3}{64(C_0C_1B)^2} = c\ep^3B^{-2},
\]
then we can always find a unique fixed point $v\in \cE_T$ of $v = \Psi(v)$, i.e., \eqref{op.rNS},
satisfying
\[
\norm{v}_{\cE_T} \leq 2C_0B.\qedhere
\]
\end{proof}

\begin{lemma}\label{sol.rNS} Let $v_0\in L^2_\uloc$ with $\div v_0 =0$. 
For each $\ep\in (0,1)$,
we can find $v^\ep$ in $\cE_T$ and $p^\ep$ in $L^\infty(0,T;L^2(\R^3))$ for some positive $T=T(\ep, \norm{v_0}_{L^2_\uloc})$ which solves the localized-mollified Navier-Stokes equations \eqref{reg.NS} in the sense of distributions, and $\lim _{t \to 0_+} \norm{v^\ep(t)-v_0}_{L^2(E)}=0$ for any compact subset $E$ of $\R^3$.
\end{lemma}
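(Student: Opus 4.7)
The plan is to upgrade the mild solution furnished by Lemma \ref{mild.sol} to a distributional solution with an explicit pressure, exploiting the compact spatial support of the localized nonlinearity. Fix $\ep\in(0,1)$ and choose $T=T(\ep,\norm{v_0}_{L^2_\uloc})$ satisfying the smallness required by Lemma \ref{mild.sol}, and let $v^\ep\in\cE_T$ be the unique fixed point of \eqref{op.rNS}. Because $v_0$ is divergence free, so is $e^{t\Delta}v_0$, and the Duhamel integrand is killed by divergence thanks to $\mathbb{P}$; hence $\div v^\ep=0$ distributionally on $\R^3\times(0,T)$.

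Next I define the pressure as $p^\ep := R_iR_j\bigl[\cJ(v^\ep)_i(v^\ep\Phi_\ep)_j\bigr]$, where $R_k$ denote the Riesz transforms on $\R^3$. The bilinear argument is supported in $B(0,2\ep^{-1})$ thanks to $\Phi_\ep$; on that set $\cJ(v^\ep)\in L^\infty(0,T;L^\infty(\R^3))$ by Young's inequality (with constant depending on $\ep$) and $v^\ep\Phi_\ep\in L^\infty(0,T;L^2(\R^3))$. Therefore the product lies in $L^\infty(0,T;L^2(\R^3))$ with compact spatial support, and $L^2$-boundedness of the Riesz transforms yields $p^\ep\in L^\infty(0,T;L^2(\R^3))$. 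To verify the PDE distributionally, I will differentiate \eqref{op.rNS} in time against a test function in $C^\infty_c(\R^3\times(0,T))$, invoke the Helmholtz identity $\mathbb{P}\,\div F = \div F + \nabla R_iR_jF_{ij}$ with $F=\cJ(v^\ep)\otimes v^\ep\Phi_\ep$, and use $\div\cJ(v^\ep)=\cJ(\div v^\ep)=0$ to collapse $\div F$ to $(\cJ(v^\ep)\cdot\nabla)(v^\ep\Phi_\ep)$, yielding \eqref{reg.NS}.

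For the initial condition on a compact $E\subset\R^3$, I will use the triangle inequality
\[
\norm{v^\ep(t)-v_0}_{L^2(E)}\le \norm{e^{t\Delta}v_0-v_0}_{L^2(E)} + \norm{\int_0^t e^{(t-s)\Delta}\mathbb{P}\,\div(\cJ(v^\ep)\otimes v^\ep\Phi_\ep)(s)\,ds}_{L^2(E)}.
\]
The first term vanishes as $t\to 0^+$ by a standard $L^2_\uloc$ approximation: decompose $v_0=v_0\chi+v_0(1-\chi)$ with $\chi\in C^\infty_c(\R^3)$ equal to $1$ on a neighborhood of $E$; the first piece converges to itself in $L^2(\R^3)$ under $e^{t\Delta}$, and the second contributes only Gaussian-tail decay on $E$ by heat kernel estimates for $|x-y|$ bounded below. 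The second term is $O(\sqrt t)$ by Lemma \ref{lemma24} applied on $[0,t]$, combined with $\norm{\cJ(v^\ep)\otimes v^\ep\Phi_\ep}_{U^{2,2}_t}\lesssim \sqrt t\,\ep^{-3/2}\norm{v^\ep}_{\cE_T}^2$.

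The main technical subtlety is justifying the Helmholtz identity at the level of distributions given that $v^\ep$ does not decay at infinity; the compact support forced by $\Phi_\ep$ is precisely what makes $p^\ep$ a bona fide $L^2(\R^3)$ function and what reduces the passage from the integral equation to the PDE to a routine algebraic manipulation. Beyond this, the proof amounts to a direct upgrade of the Picard fixed point to a distributional solution with an explicitly defined pressure.
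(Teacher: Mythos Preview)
Your proposal is correct and follows essentially the same route as the paper: take the mild solution from Lemma \ref{mild.sol}, use the compact support of $\Phi_\ep$ to place the nonlinearity in $L^\infty(0,T;L^2(\R^3))$, define $p^\ep$ as the double Riesz transform of that nonlinearity (the paper writes it as $(-\De)^{-1}\pa_i\pa_j(\cJ(v_i^\ep)v_j^\ep\Phi_\ep)$), and handle the initial data by splitting into the linear heat flow and an $O(\sqrt t)$ Duhamel remainder. The only cosmetic differences are that the paper decomposes $v^\ep=e^{t\De}v_0+w^\ep$ and invokes the classical Stokes theory for $w^\ep$ rather than differentiating the Duhamel formula directly, and it proves $\norm{e^{t\De}v_0-v_0}_{L^2(E)}\to 0$ via the change of variables $y\mapsto x-\sqrt t\,z$ and dominated convergence instead of your near/far cutoff; both variants are standard and equivalent.
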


\begin{proof} By Lemma \ref{mild.sol}, there is a mild solution $v^\ep \in \cE_T$ of \eqref{op.rNS} for some $T=T(\ep, \norm{v_0}_{L^2_\uloc})$. Apparently,  
\EQN{
\norm{v^\ep-e^{t\De}v_0}_{U^{\infty,2}_t} 
&= \norm{\int_0^t e^{(t-s)\De} \mathbb{P} \na \cdot (\cJ(v) \otimes v \Phi_\ep)(s) ds}_{U^{\infty,2}_t} \\
&\lec \norm{\cJ(v) \otimes v \Phi_\ep}_{U^{2,2}_t} 
\lec \ep^{-\frac 32}\sqrt{t}\norm{v}_{U^{\infty,2}_T}^2.
}
Also, for any compact subset $E$ of $\R^3$, we have $\norm{e^{t\De}v_0-v_0}_{L^2(E)}\to 0$ as $t$ goes to $0$; by Lebesgue's convergence theorem
\[
\norm{e^{t\De}v_0-v_0}_{L^2(E)} \leq \frac 1{(4\pi)^\frac32}\int e^{-\frac{|z|^2}{4}} \norm{v_0(\cdot -\sqrt{t}z) - v_0}_{_{L^2(E)}} dz \to 0, 
\]
as $t \to 0+$. Then, it follows that
$\lim _{t \to 0_+} \norm{v^\ep(t)-v_0}_{L^2(E)}=0$ for any compact subset $E$ of $\R^3$.

Note that $ e^{t\De}v_0$ with $v_0\in L^2_\uloc$ solves the heat equation in the distributional sense. Also, using $\div v_0 =0$, we can easily see that $\div e^{t\De}v_0 =0$.

On the other hand, $\cJ(v^\ep)\in L^\infty(\R^3\times [0,T])$ and $v^\ep\in \cE_{T}$ imply 
\[
\cJ(v^\ep) \otimes v^\ep \Phi_\ep\in L^\infty(0,T;L^2(\R^3))
\]
 and hence by the classical theory, $w^\ep = v^\ep-V$ and $p^\ep$ defined by
\EQ{\label{pep.def}
p^\ep = (-\De)^{-1}\pa_i\pa_j (\cJ(v_i^\ep) v_j^\ep \Phi_\ep) \in L^\infty(0,T;L^2(\R^3)).
}
 solves Stokes system with the source term $\na \cdot (\cJ(v^\ep) \otimes v^\ep \Phi_\ep)$ in the distribution sense.  

By adding the heat equation for $V$ with $\div V =0$ and the Stokes system for $(w^\ep,p^\ep)$, $v^\ep = V+w^\ep $ satisfies
\[
\pa_t v^\ep - \De v^\ep + (\cJ(v^\ep)\cdot \na) (v^\ep \Phi_\ep) + \na p^\ep =0
\]
in the sense of distribution. 
\end{proof}

To extract a limit solution from the family $(v^\ep, p^\ep)$ of approximation solutions, we need a uniform bound of $(v^\ep, p^\ep)$ on a uniform time interval $[0,T]$, $T>0$.  

\begin{lemma}\label{uni.est.ep} 
For each $\ep \in (0,1)$, let  $(v^\ep, p^\ep)$ be the solution on $\R^3 \times [0,T_\ep]$, for some $T_\ep>0$, to the localized-mollified Navier-Stokes equations \eqref{reg.NS} constructed in Lemma \ref{sol.rNS}. There is a small constant  $\e_1>0$, independent of $\e$ and $\norm{v_0}_{L^2_\uloc}^2$, such that, if  $T_\ep \leq T_0= {\e_1}(1+\norm{v_0}_{L^2_\uloc}^4)^{-1}$, then $v^\ep$ is uniformly bounded
\EQ{\label{uni.est.ep.li}
\norm{v^\ep}_{\cE_{T_\ep}} \leq C\norm{v_0}_{L^2_\uloc}, 
}
where the constant $C$ on the right hand side is independent of $\ep$ and $T_\ep$. 
\end{lemma}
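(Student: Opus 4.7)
For each fixed $\ep \in (0,1)$, standard parabolic regularity applied to \eqref{reg.NS} (with the smooth, compactly supported source $\cJ(v^\ep) \otimes v^\ep \Phi_\ep$) and to the explicit formula \eqref{pep.def} shows that $v^\ep$ and $p^\ep$ are smooth in $x$ for $t>0$, so all manipulations below are justified. Fix a smooth cut-off $\phi_{x_0} \in C_c^\infty(\R^3)$ with $\phi_{x_0} \equiv 1$ on $B(x_0,1)$ and $\supp \phi_{x_0} \subset B(x_0, 2)$, chosen as a rescaled translate of $\Phi$. Multiplying \eqref{reg.NS} by $v^\ep \phi_{x_0}^2$, integrating, and using $\div \cJ(v^\ep) = \div v^\ep = 0$ to symmetrize the nonlinear term produces a localized energy identity
\EQN{
\int |v^\ep(t)|^2 \phi_{x_0}^2\, dx &+ 2\int_0^t\!\!\int |\nabla v^\ep|^2 \phi_{x_0}^2\, dx\, ds = \int |v_0|^2 \phi_{x_0}^2\, dx \\
& + \int_0^t\!\!\int |v^\ep|^2 \Delta(\phi_{x_0}^2)\, dx\, ds + N + P,
}
where $N$ is a sum of terms of the form $\int |v^\ep|^2 \cJ(v^\ep) \cdot \nabla(\Phi_\ep \phi_{x_0}^2)$ (plus harmless contributions from $\nabla \Phi_\ep$ which are supported far from $x_0$ and carry an extra factor of $\ep$), and $P = -2\int_0^t\!\!\int p^\ep\, \cJ(v^\ep) \cdot \nabla(\phi_{x_0}^2)$.

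\textbf{Nonlinear and pressure estimates.} The integrand of $N$ is essentially supported in $B(x_0, 2)$, so H\"older plus the uniform bound $\norm{\cJ(v^\ep)}_{L^3(B(x_0,2))} \lec \norm{v^\ep}_{L^3(B(x_0,3))}$ yield $|N| \lec \int_0^t \norm{v^\ep}_{L^3(B(x_0,3))}^3\, ds$. Interpolating $L^3 \subset [L^2, L^6]$ and using $H^1(B_1)\hookrightarrow L^6(B_1)$ on each of the $O(1)$ unit balls covering $B(x_0,3)$ gives
\[
\norm{v^\ep}_{L^3(B(x_0,3))}^3 \lec \norm{v^\ep}_{L^2_\uloc}^{3/2}\norm{\nabla v^\ep}_{L^2_\uloc}^{3/2} + \norm{v^\ep}_{L^2_\uloc}^3.
\]
For the pressure, decompose $p^\ep = p^\ep_{\mathrm{near}} + p^\ep_{\mathrm{far}}$ near $x_0$ by splitting the source in \eqref{pep.def} into its restriction to $B(x_0, 4)$ and the complement. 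The $L^{3/2}$ boundedness of $\partial_i\partial_j(-\Delta)^{-1}$ yields $\norm{p^\ep_{\mathrm{near}}}_{L^{3/2}(B(x_0,2))} \lec \norm{v^\ep}_{L^3(B(x_0,4))}^2$. For $p^\ep_{\mathrm{far}}$, replace the kernel $K_{ij}(x-y)$ by $K_{ij}(x-y) - K_{ij}(x_0-y)$, since any $x$-independent constant is annihilated by $\int \nabla(\phi_{x_0}^2)\,dx = 0$; the resulting kernel decays like $|y-x_0|^{-4}$, and a dyadic annulus sum over $B(x_0, 2^{k+1})\setminus B(x_0, 2^k)$ gives $\norm{p^\ep_{\mathrm{far}}(\cdot,t) - p^\ep_{\mathrm{far}}(x_0,t)}_{L^\infty(B(x_0,2))} \lec \norm{v^\ep}_{L^2_\uloc}^2$. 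Combining, $|P|$ admits the same type of bound as $|N|$.

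\textbf{Closure and main obstacle.} Taking the supremum over $x_0$ and using Young's inequality to absorb the $\norm{\nabla v^\ep}_{L^2_\uloc}^{3/2}$ factor into the dissipative left-hand side, the quantity
\[
A(t) := \norm{v^\ep}_{U^{\infty,2}_t}^2 + \norm{\nabla v^\ep}_{U^{2,2}_t}^2
\]
satisfies $A(t) \lec \norm{v_0}_{L^2_\uloc}^2 + \int_0^t \bke{1 + A(s)}^3\, ds$ with absolute implicit constant. A standard continuity (bootstrap) argument then shows that for a sufficiently small absolute $\e_1 > 0$, the assumption $T_\ep \le \e_1/(1 + \norm{v_0}_{L^2_\uloc}^2)$ forces $A(t) \le C\norm{v_0}_{L^2_\uloc}^2$ on $[0, T_\ep]$, proving \eqref{uni.est.ep.li}. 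The hard part is the pressure estimate uniform in $\ep$: \eqref{pep.def} only gives $p^\ep \in L^\infty(0,T;L^2(\R^3))$ with a norm that blows up as $\ep\to 0$ when $v_0$ is non-decaying, so the global bound is useless. Everything hinges on the near/far decomposition with the subtracted $x$-independent constant, which is precisely the mechanism underlying \eqref{pressure.decomp}--\eqref{hp.def} in Definition~\ref{les} and is what reduces the pressure control to purely local $L^{3/2}$ and $L^\infty$ quantities depending only on $\norm{v^\ep}_{L^2_\uloc}$.
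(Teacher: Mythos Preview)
Your approach is essentially the same as the paper's: localized energy identity, near/far pressure decomposition with a subtracted $x$-independent constant, interpolation to close in $U^{3,3}$, and a Gr\"onwall/bootstrap to conclude. Two points deserve correction, though.

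First, the pressure term is $P = 2\int_0^t\!\int p^\ep\, v^\ep\cdot\nabla(\phi_{x_0}^2)$, with $v^\ep$ rather than $\cJ(v^\ep)$; this comes from testing \eqref{reg.NS} with $2v^\ep\phi_{x_0}^2$ and using $\div v^\ep=0$. More importantly, your stated reason for why the subtracted constant $c_{x_0}(t)$ drops out --- ``$\int\nabla(\phi_{x_0}^2)\,dx=0$'' --- is not the relevant mechanism. What you actually need is $\int c_{x_0}(t)\, v^\ep\cdot\nabla(\phi_{x_0}^2)\,dx = -c_{x_0}(t)\int(\div v^\ep)\,\phi_{x_0}^2\,dx = 0$, which uses $\div v^\ep=0$. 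This is exactly why the pressure decomposition \eqref{pressure.decomp}--\eqref{hp.def} works for \emph{divergence-free} solutions and is the conceptual point you flagged as ``the hard part''.

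Second, your closing inequality $A(t)\lesssim\norm{v_0}_{L^2_\uloc}^2+\int_0^t(1+A(s))^3\,ds$ is too crude for small data: if $\norm{v_0}_{L^2_\uloc}\ll 1$, the bootstrap $A\le 2C\norm{v_0}_{L^2_\uloc}^2$ forces $t\lesssim\norm{v_0}_{L^2_\uloc}^2\to 0$, contradicting $T_0\sim\e_1$. The correct inequality (which your own interpolation step actually gives) is $A(t)\lesssim\norm{v_0}_{L^2_\uloc}^2+\int_0^t\bke{A(s)+A(s)^3}\,ds$; the linear term dominates for small data and gives $O(1)$ existence time, while the cubic term governs the large-data regime. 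With this fix your bootstrap yields the stated $T_0$, matching the paper's Gr\"onwall argument.
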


\begin{proof}
Let $\phi_{x_0} = \Phi(\cdot-x_0)$ be a smooth cut-off function supported around $x_0$. For the convenience, we drop the index $x_0$. Starting from $v^\ep \in \cE_{T_\ep}$ and $p^\ep \in L^\I_{T_\ep} L^2$, and using the interior regularity theory for perturbed Stokes system with smooth coefficients, we have
\[
\norm{v^\ep, \pa_t v^\ep, \na v^\ep, \De v^\ep }_{L^\infty((\de,T_\ep)\times \R^3)}  <+\infty
\]
for any $\de\in (0,T_\ep)$. Using $2v^\ep \psi$ with $\psi \in C^\infty_c((0,T_\ep)\times \R^3)$ as a test function in \eqref{reg.NS}, we get
\EQN{%
2\int_0^T\!\! \int |\na v^\ep|^2\psi dxds
=& %
\int_0^T\!\!\int |v^\ep|^2(\pa_s \psi +\De \psi) dxds 
+\int_0^T\!\!\int |v^\ep|^2\Phi_\ep ({\cal J}_\ep(v^\ep) \cdot \na)\psi dxds\\
&+ 2\int_0^T\!\! \int p^\ep v^\ep \cdot \na \psi dxds
-\int_0^T\!\!\int |v^\ep|^2\psi ({\cal J}_\ep(v^\ep)\cdot \na)\Phi_\ep dxds.
}
Using $\lim_{t\to 0_+}\norm{v^\ep(t)-v_0}_{L^2(B_n)} =0$ for any $n\in\NN$ (Lemma \ref{sol.rNS}), we can show
\EQ{\label{LEI.vep00}
\int |v^\ep|^2 & \psi(x,t) dx+ 2\int_0^t\!\! \int  |\na v^\ep|^2\psi dxds
= \int |v_0|^2 \psi(\cdot,0) dx
\\
&+\int_0^t\!\!\int |v^\ep|^2(\pa_s \psi +\De \psi) dxds 
+\int_0^t\!\!\int |v^\ep|^2\Phi_\ep ({\cal J}_\ep(v^\ep) \cdot \na)\psi dxds\\
&+ 2\int_0^t\!\! \int p^\ep v^\ep \cdot \na \psi dxds
-\int_0^t\!\!\int |v^\ep|^2\psi ({\cal J}_\ep(v^\ep)\cdot \na)\Phi_\ep dxds
}
for any  $\psi \in C^\infty_c([0,T_\ep)\times  \R^3)$ and $0<t<T_\ep$.

We suppress the index $\ep$ in $v^\ep$ and $p^\ep$, and take $\psi(x,s)= \phi(x) \th(s)$ where $\th(s) \in C^\infty_c([0,T_\ep))$ and $\th(s)=1$ on $[0,t]$ to get
\EQ{\label{pre.lei.locex}
\norm{v(t) \phi}_2^2 &+ 2\norm{|\na v|\phi}_{L^2([0,t]\times \R^3)}^2\\ 
\lec& \norm{v_0}_{L^2_\uloc}^2 
+ \left|\int_0^t\!\int |v|^2|\De \phi^2|dxds \right|
+\left|\int_0^t\!\int |v|^2\phi^2 (\cJ(v)\cdot \na)\Phi_\ep dxds\right|\\
&+\left|\int_0^t\!\int |v|^2\Phi_\ep (\cJ(v)\cdot \na)\phi^2 dxds\right|
+\left|\int_0^t\!\int 2\hp (v\cdot \na)\phi^2  dxds\right|  \\
=& \norm{v_0}_{L^2_\uloc}^2 + I_1 +I_2+ I_3 +I_4,
}
where $\hp = \hp^\ep_{x_0}$ will be defined later in \eqref{phat.def} as a function satisfying $\na(p - \hp)=0 $ on $B(x_0, \frac 32)\times (0,T)$.   

The bounds of $I_1$, $I_2$ and $I_3$ can be easily obtained by H\"{o}lder inequalities,
\EQ{\label{I.123}
I_1 \lec \norm{v}_{U^{2,2}_t}^2, \quad \text{and} \quad 
I_2, I_3 \lec \norm{v}_{U^{3,3}_t}^3.
}
Here we have used $|\na \Phi_\ep|\lec \ep \leq 1$. 

On the other hand, $I_4$ can be estimated as
\[
I_4 \lec \norm{\hp}_{L^\frac32([0,t]\times B(x_0,\frac 32))}\norm{v}_{U^{3,3}_t}.
\]

Now, we define $\hp^\ep$ on $B(x_0,\frac 32)\times [0,T]$ by
\EQ{\label{phat.def}
\hp^\ep(x,t)
=&-\frac 13 \cJ(v^\ep)\cdot v^\ep\Phi_\ep(x,t) 
+ \pv \int_{B(x_0,2)}  K_{ij}(x-y) \cJ(v^\ep_i) v^\ep_j(y,t) \Phi_\ep(y) dy\\
&+  \int_{B(x_0,2)^c}  (K_{ij}(x-y)-K_{ij}(x_0-y)) \cJ(v^\ep_i) v^\ep_j(y,t) \Phi_\ep(y) dy \\
=& \ \hp^1 + \hp^2 + \hp^3.
}
Comparing the above with \eqref{pep.def} for $p^\ep$, which has the singular integral form 
\EQN{
p^\ep(x,t) = -\frac 13 \cJ(v^\ep)\cdot v^\ep(x,t)\Phi_\ep(x) + \pv \int  K_{ij}(x-y) \cJ(v_i^\ep) v_j^\ep(y,t) \Phi_\ep(y) dy,
}
we see that $p-\hp$ depends only on $t$, and hence $\na \hp =\na p$  on $B(x_0,\frac 32)\times [0,T]$.

Then, we take $L^\frac32([0,t]\times B(x_0,\frac 32))$-norm for each term to get
\[
\norm{\hp^1}_{L^\frac32([0,t]\times B(x_0,\frac 32))}
\lec \norm{v}_{U^{3,3}_t}^2,
\]
and 
\[
\norm{\hp^2}_{L^\frac32([0,t]\times B(x_0,\frac 32))}
\leq \norm{\hp^2}_{L^{\frac 32}([0,t]\times \R^3)}
\lec \norm{\cJ(v_i) v_j \Phi_\ep}_{L^\frac32([0,t]\times B(x_0,2))}
\lec \norm{v}_{U^{3,3}_t}^2. 
\]
The second inequality for $\hp^2$ follows from Calderon-Zygmund theorem. Finally, using 
\[
|K_{ij}(x-y)-K_{ij}(x_0-y)| \lec \frac {|x-x_0|}{|x_0-y|^4} 
\]
for $x\in B(x_0,\frac 32)$ and $y\in B(x_0,2)^c$, we have
\EQN{
\norm{\hp^3}_{L^\frac32([0,t]\times B(x_0,\frac 32))}
&\lec \norm{\int_{B(x_0,2)^c} \frac 1{|x_0-y|^4}\cJ(v_i) v_j(y,s) \Phi_\ep(y)dy}_{L^\frac32(0,t)}\\
&\lec \norm{\sum_{k=1}^\infty \frac {1}{2^{4k}} \int_{B(x_0,2^{k+1})}  |\cJ(v_i) v_j| (y,s) dy}_{L^\frac32(0,t)}\\
&\le \sum_{k=1}^\infty  \frac {1}{2^{4k}} \norm{\sum_{j=1}^{J_k} \int_{B(x^k_j,1)} |\cJ(v_i) v_j| (y,s) dy}_{L^\frac32(0,t)}\\
&\lec \sum_{k=1}^\infty \frac {J_k}{2^{4k}}\norm{\cJ(v_i) v_j}_{U^{\frac 32,\frac 32}_t} 
\lec \norm{v}_{U^{3,3}_t}^2.
}
Above we have taken $B(x_0,2^{k+1}) \subset \cup_{j=1}^{J_k} B(x^k_j,1)$ with $J_k \lec 2^{3k}$.

Therefore, we get
\EQ{\label{est.hp}
\norm{\hp}_{L^\frac32([0,t]\times B(x_0,\frac 32))} \lec \norm{v}_{U^{3,3}_t}^2
}
and 
\[
I_4 \lec \norm{v}_{U^{3,3}_t}^3. 
\]

Combining this with \eqref{I.123} and taking supremum on \eqref{pre.lei.locex} over $\{x_0\in \R^3\}$, we have
\[
\norm{v(t)}_{L^2_\uloc}^2 + 2\norm{\na v}_{U^{2,2}_t}^2 
\lec \norm{v_0}_{L^2_\uloc}^2 + \int_0^t \norm{v(s)}_{L^2_\uloc}^2 ds
+ \norm{v}_{U^{3.3}_t}^3.
\]
Then, using the interpolation inequality and Young's inequality,
\EQN{%
\norm{v}_{U^{3,3}_t}^3
&\lec \norm{v}_{U^{6,2}_t}^{3/2} \norm{v}_{U^{2,6}_t}^{3/2}
\\
& \lec \norm{v}_{L^6(0,t; L^2_\uloc )}^6 + \norm{v}_{L^2(0,t; L^2_\uloc )}^2 +\norm{\na v}_{U^{2,2}_t}^2,
}
we get
\EQ{\label{pre.uni.bdd}
\norm{v(t)}_{L^2_\uloc}^2 + \norm{\na v}_{U^{2,2}_t}^2 
\lec  \norm{v_0}_{L^2_\uloc}^2 + \int_0^t \norm{v(s)}_{L^2_\uloc}^2 ds
+ \int_0^t \norm{v(s)}_{L^2_\uloc}^6 ds.
}

Finally, we apply the Gr\"{o}nwall inequality, so that there is a small $\e_1>0$ such that, if $v^\ep$ exists on $[0,T]$ for
$T\le T_0$, $T_0= {\e_1}\bke{1+\norm{v_0}_{L^2_\uloc}^4}^{-1}$,
then we have
\[
\sup_{0<t<T} \norm{v^\ep(t)}_{L^2_\uloc} \lec \norm{v_0}_{L^2_\uloc}\left(1- \frac{Ct \norm{v_0}_{L^2_\uloc}^4}{\min(1,\norm{v_0}_{L^2_\uloc})^4}\right)^{-\frac 14}
\le \norm{v_0}_{L^2_\uloc}(1- {C\e_1} )^{-\frac 14}.
\]
Together with \eqref{pre.uni.bdd}, this completes the proof.
\end{proof}

\begin{lemma} \label{vep.uniform.interval}
The distributional solutions $\{(v^\ep,p^\ep)\}_{0<\ep<1}$ of \eqref{reg.NS} constructed in Lemma \ref{sol.rNS} can be extended 
to the uniform time interval $[0,T_0]$, where $T_0$ is as in Lemma \ref{uni.est.ep}.
\end{lemma}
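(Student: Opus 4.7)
The plan is to argue by iteration that the maximal existence time of $(v^\ep, p^\ep)$ within $[0, T_0]$ must equal $T_0$ itself. Although the Picard time step in Lemma~\ref{mild.sol} shrinks with the $L^2_\uloc$-size of the initial data, Lemma~\ref{uni.est.ep} ensures that $\norm{v^\ep(t)}_{L^2_\uloc}$ stays bounded by $C\norm{v_0}_{L^2_\uloc}$ on any subinterval of $[0, T_0]$ on which the solution already exists. Consequently every restart produces a new step of the same size $\tau_\ep := c\ep^3 (C\norm{v_0}_{L^2_\uloc})^{-2}$, independent of where the restart takes place, and finitely many such steps cover $[0, T_0]$.

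Concretely, I would define
\[
T^* = \sup\{T \in (0, T_0] : \text{a distributional solution of \eqref{reg.NS} as in Lemma~\ref{sol.rNS} exists on } [0, T]\},
\]
and assume for contradiction $T^* < T_0$. For any $T < T^*$, Lemma~\ref{uni.est.ep} gives $\norm{v^\ep}_{\cE_T} \le C\norm{v_0}_{L^2_\uloc}$; in particular $\cJ(v^\ep) \otimes v^\ep \Phi_\ep \in U^{2,2}_{T^*}$, and so by Lemma~\ref{lemma24} the right-hand side of \eqref{op.rNS} extends continuously in $t$ up to $t = T^*$ as a map into $L^2_\uloc$. Hence $v^\ep(T^*) \in L^2_\uloc$ is well defined with norm $\le C\norm{v_0}_{L^2_\uloc}$. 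Next, I would apply Lemma~\ref{mild.sol} with initial datum $v^\ep(T^*)$ to produce a mild solution $\tilde v$ of the time-shifted analogue of \eqref{op.rNS} on $[T^*, T^* + \tau_\ep]$ (shrinking $\tau_\ep$ if necessary so that $T^* + \tau_\ep \le T_0$). Using the semigroup property of $e^{t\De}$ together with \eqref{op.rNS} evaluated at $t = T^*$, a direct calculation shows that the concatenation of $v^\ep$ on $[0, T^*]$ and $\tilde v$ on $[T^*, T^* + \tau_\ep]$ again satisfies \eqref{op.rNS} on $[0, T^* + \tau_\ep]$ with initial data $v_0$. Lemma~\ref{sol.rNS} then promotes this concatenation to a distributional solution on $[0, T^* + \tau_\ep]$, with pressure defined by \eqref{pep.def}; because \eqref{pep.def} is pointwise in $t$ and determined entirely by the extended velocity, the new pressure automatically agrees with $p^\ep$ on $[0, T^*]$. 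This contradicts the definition of $T^*$, so $T^* = T_0$.

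The main obstacle I foresee is establishing that $v^\ep(T^*) \in L^2_\uloc$ is well defined and that $v^\ep$ is continuous in $t$ into $L^2_\uloc$ up to $t = T^*$, which is what allows Lemma~\ref{mild.sol} to be restarted legitimately at $T^*$. This reduces to verifying that the Duhamel integral in \eqref{op.rNS}, as a function of its upper time limit, is continuous into $L^2_\uloc$ on $[0, T^*]$; this follows by approximating $T^*$ from below and invoking Lemma~\ref{lemma24} on the nonlinear term, which is uniformly in $U^{2,2}_{T^*}$ by Lemma~\ref{uni.est.ep}. Once this continuity is in hand, the remaining argument is the finite iteration outlined above, since $\tau_\ep$ depends only on $\ep$ and $\norm{v_0}_{L^2_\uloc}$.
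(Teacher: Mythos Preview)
Your overall strategy matches the paper's: iterate the local construction, using the a priori bound from Lemma~\ref{uni.est.ep} to ensure each restart produces a step of the same length $S=S(\ep,B)$, so finitely many steps cover $[0,T_0]$. The paper also verifies explicitly, via the semigroup identity, that the glued function again solves \eqref{op.rNS}, exactly as you outline.

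The one technical difference is where the restart takes place. The paper restarts at a time $\tau$ strictly inside the current existence interval: since $\norm{v}_{\cE_S}\le B$ gives $\norm{v(\tau)}_{L^2_\uloc}\le B$ for a.e.\ $\tau\in(0,S)$, one simply picks such a $\tau\in(\tfrac34 S,S)$ and restarts there. This sidesteps any continuity question at an endpoint. You instead restart at the supremum $T^*$, which forces you to prove that $v^\ep(t)$ is continuous into $L^2_\uloc$ up to $t=T^*$. Your justification for this, ``invoking Lemma~\ref{lemma24} on the nonlinear term,'' is not quite enough: Lemma~\ref{lemma24} only gives membership in $\cE_{T^*}=L^\infty_tL^2_\uloc\cap L^2_t\dot H^1_\uloc$, not continuity in $t$. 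The continuity does hold, but the reason is the localization factor $\Phi_\ep$: it makes $\cJ(v^\ep)\otimes v^\ep\Phi_\ep\in L^\infty(0,T^*;L^2(\R^3))$ (not merely $U^{2,2}$), and then the Duhamel term lies in $C([0,T^*];L^2(\R^3))$ by the standard $L^2$ theory for the Stokes system in $\R^3$. Once you add this observation your argument goes through; alternatively, restarting at an interior good time as the paper does avoids the issue entirely.
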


\begin{proof}
We will prove it by iteration. For the convenience, we fix $0<\ep<1$ and drop the index $\ep$ in $v^\ep$ and $p^\ep$. Denote the uniform bound in Lemma \ref{uni.est.ep} by
\[
B=C(\norm{v_0}_{L^2_\uloc}), \quad B\geq \norm{v_0}_{L^2_\uloc}.
\]
If an initial data $v(t_0)$ satisfies $\norm{v(\cdot,t_0)}_{L^2_\uloc}\leq B$, by Lemma \ref{sol.rNS}, we get $S=S(\ep,B)>0$ and a unique solution $v(x,t+t_0)$ on $\R^3\times [0,S]$ to \eqref{op.rNS} satisfying 
\[
\norm{v(t+t_0)}_{\cE_{S}} \leq 2C_0 B.
\]

Now, we start the iteration scheme. Since $\norm{v_0}_{L^2_\uloc}\leq B$, a unique solution $v$ exists in $ \cE_{S}$ to \eqref{op.rNS}. By Lemma \ref{sol.rNS} and Lemma \ref{uni.est.ep}, $v$ satisfies
\[
\norm{v}_{\cE_{S}} \leq B.
\]
Then, we choose $\tau \in (\frac 34S,S)$, so that $\norm{v(\tau)}_{L^2_\uloc}\leq B$, and hence we obtain a solution $\td {v} \in \cE(\tau,\tau+S)$ to
\[
\td {v}(t) = e^{(t-\tau)\De}v|_{t=\tau} + \int_{\tau}^{t} e^{(t-s)\De}\mathbb{P}\na \cdot N^\ep(\td v) (s) ds,
\]
where we denote $N^\ep(v) = \cJ(v) \otimes v \Phi_\ep$.

Denote the glued solution by $u(x,t) = v(x,t) 1_{[0,\tau]}(t) + \td{v}(x,t) 1_{(\tau,\tau+S]}(t)$, where $1_E$ is a characteristic function of a set $E\subset [0,\infty)$. We claim that it solves \eqref{op.rNS} in $(0,\tau+S)$; it is obvious for $t\in (0,\tau]$, and for $t\in (\tau,\tau+S]$,
\EQN{
u(t) =& \ \td v(t)
\\
=& e^{(t-\tau)\De}\left(e^{\tau\De}v_0 + \int_0^{\tau} e^{(\tau-s)\De}\mathbb{P}\na \cdot N^\ep(v)(s) ds\right)
+ \int_{\tau}^{t} e^{(t-s)\De}\mathbb{P}\na \cdot N^\ep(\td v)(s) ds\\
=& \ e^{t\De} v_0 + \int_0^{\tau} e^{(t-s)\De}\mathbb{P}\na \cdot N^\ep(v)(s) ds
+ \int_{\tau}^{t} e^{(t-s)\De}\mathbb{P}\na \cdot N^\ep(\td v)(s) ds\\
=& \ e^{t\De} v_0 + \int_0^t e^{(t-s)\De}\mathbb{P}\na \cdot N^\ep(u)(s) ds.
}
By Lemma \ref{uni.est.ep} again, it satisfies
\[
\norm{u}_{\cE(0,\tau+S)} \leq B.
\]
By uniqueness, we get $u= v$ for $0\le t \le S$. In other words, $u$ is an extension of $v$. 

Repeat this until the extended solution exists on $[0,T_0]$. Since at each iteration, we can extend the time interval by at least $\frac 34 S$, in finite numbers of iterations, we have a distributional solution $(v^\ep,p^\ep)$ of \eqref{reg.NS} on $\R^3\times [0,T_0]$.  
\end{proof}

\begin{proof}[Proof of Theorem \ref{loc.ex}] 
For $0< \ep \ll 1$, let $(v^\ep,\bar{p}^\ep)$ be the distributional solution to the localized-mollified Navier-Stokes equations \eqref{reg.NS} on $\R^3 \times [0,T]$ constructed in Lemmas \ref{sol.rNS} and \ref{vep.uniform.interval}, where $T=T(\norm{v_0}_{L^2_\uloc})$ is independent of $\ep$. By Lemma \ref{uni.est.ep},
\[
\norm{v^\ep}_{\cE_T}\le C(\norm{v_0}_{L^2_\uloc}).
\]
We then define $p^\ep \in L^{\frac 32}_\loc( [0,T]\times \R^3)$ by
\EQ{\label{p.ep}
p^\ep (x,t)
&=-\frac 13 \cJ(v^\ep)\cdot v^\ep(x,t)\Phi_\ep(x) 
+ \pv \int_{B_2}  K_{ij}(x-y) N^\ep_{ij}(y,t) dy\\
&\quad + \pv \int_{B_2^c}  (K_{ij}(x-y)-K_{ij}(-y)) N^\ep_{ij}(y,t) dy,
\\
N^\ep_{ij}(y,t)  & = \cJ(v_i^\ep) v_j^\ep(y,t) \Phi_\ep(y).
}
Because $N^\ep_{ij} \in L^\infty(0,T; L^2(\R^3))$, the right side of \eqref{p.ep} is defined in $L^\infty(0,T; L^2(\R^3)) + L^\infty(0,T)$.
Note that $\na(\bar{p}^\ep -p^\ep) =0$ because
\[
(\bar{p}^\ep -p^\ep)(t) =  \int_{B_2^c}  K_{ij}(-y) \cJ(v_i^\ep) v_j^\ep(y,t) \Phi_\ep(y) dy \in L^{\frac 32}(0,T).
\]
Therefore, $(v^\ep, p^\ep)$ is another distributional solution to the localized-mollified equations \eqref{reg.NS}. We will show that for each $n\in \NN$, $p^\ep$ has a bound independent of $\ep$ in $L^\frac 32([0,T]\times B_{2^n})$. We drop the index $\ep$ in $v^\ep$ and $p^\ep$ for a moment.

For $n\in \NN$, 
we rewrite \eqref{p.ep} for $x\in B_{2^{n}}$ as follows.
\EQN{
p(x,t) =&-\frac 13 \cJ(v)\cdot v(x,t)\Phi_\ep(x) 
+ \pv \int_{B_2}  K_{ij}(x-y) N_{ij}^{\ep}(y,t) dy\\
&+  \left(\pv \int_{B_{2^{n+1}}\setminus B_2}+\pv \int_{B_{2^{n+1}}^c}\right)  (K_{ij}(x-y)-K_{ij}(-y)) N_{ij}^{\ep}(y,t) dy\\
=& \ p_1 + p_2 + p_3 +p_4.
} 
All $p_i$ are defined in $L^\infty(0,T; L^2)+L^\infty(0,T)$.

By Lemma \ref{uni.est.ep}, we have
\EQ{\label{est.Nij}
\norm{N_{ij}^\ep}_{U^{\frac 32,\frac32}_T} \lec \norm{\cJ(v)}_{U^{3,3}_T}\norm{v}_{U^{3,3}_T}
\leq C(\norm{v_0}_{L^2_\uloc}),
}
and
\EQ{\label{est.Nij2}
\norm{{N}_{ij}^{\ep}}_{L^{\frac 32}([0,T]\times B_{2^n})} 
\lec 2^{2n}\norm{\cJ(v)}_{U^{3,3}_T}\norm{v}_{U^{3,3}_T}
\leq C(n, \norm{v_0}_{L^2_\uloc}), \quad\forall n\in \NN.
}

Then, the bound of $p_1$ can be obtained since
\[
\norm{p_1}_{L^{\frac 32}([0,T]\times B_{2^n})}\lec  \sum_{i=1}^3\norm{{N}_{ii}^{\ep}}_{L^{\frac 32}([0,T]\times B_{2^n})}.
\]
Using Calderon-Zygmund theorem, we get
\[
\norm{p_2}_{L^{\frac 32}([0,T]\times B_{2^n})}
\lec \norm{{N}_{ij}^{\ep}}_{L^{\frac 32}([0,T]\times B_{2})},
\]
and 
\[
\norm{p_{31}}_{L^{\frac 32}([0,T]\times B_{2^n})}
\lec \norm{{N}_{ij}^{\ep}}_{L^{\frac 32}([0,T]\times B_{2^{n+1}})}, 
\]
where 
\[
p_{31}(x,t) = \pv \int_{B_{2^{n+1}}\setminus B_2} K_{ij}(x-y) \cJ(v_i)v_j(y,t)\Phi_\ep(y)dy.
\]
On the other hand, $p_{32}= p_3-p_{31}$ satisfies
\EQN{
\norm{p_{32}}_{L^{\frac 32}([0,T]\times B_{2^n})}
&\lec 2^{2n} \norm{\frac 1{|y|^3}}_{L^3(B_{2^{n+1}}\setminus B_2)} 
\norm{{N}_{ij}^{\ep}}_{L^{\frac 32}([0,T]\times B_{2^{n+1}})}
\\
&\lec 2^{2n} \norm{{N}_{ij}^{\ep}}_{L^{\frac 32}([0,T]\times B_{2^{n+1}})}.
}

Since for $x\in B_{2^{n}}$ and $y\in B_{2^{n+1}}^c$, we have
\[
|K_{ij}(x-y)-K_{ij}(-y)|\lec \frac{|x|}{|y|^4} \lec \frac{2^n}{|y|^4},
\]
the bound of $p_4$ can be obtained as
\EQN{
\norm{p_4}_{L^\frac 32([0,T]\times B_{2^n})}
&\lec 2^{2n} \norm{p_4}_{L^\frac 32(0,T; L^\infty( B_{2^n}))}
\lec 2^{3n}\norm{\int_{B_{2^{n+1}}^c} \frac 1{|y|^4} |N_{ij}^\ep|(y,t) dy}_{L^\frac 32(0,T)}\\
&\lec 2^{3n} \sum_{k=n+1}^\infty \frac 1{2^{4k}}\norm{N_{ij}^\ep}_{L^\frac 32(0,T;L^1(B_{2^{k+1}}))}
\lec_n  \norm{N_{ij}^\ep}_{U^{\frac 32,1}_T}.
}

Adding the estimates and using \eqref{est.Nij}-\eqref{est.Nij2}, we get for each $n\in \NN$, 
\EQ{\label{uni.bdd.p}
\norm{p^\ep}_{L^\frac 32([0,T]\times B_{2^n})} \leq C(n,\norm{v_0}_{L^2_\uloc}).
}

Now, we find a limit solution of $(v^\ep, p^\ep)$ up to subsequence on each $[0,T]\times B_{2^n}$, $n\in \NN$. 

First, construct the solution $v$ on the compact set $[0,T]\times B_2$. By uniform bounds on $v^\ep$ and the compactness argument, we can extract a sequence $v^{1,k}$ from $\{v^\ep\}$ satisfying
\EQN{
&v^{1,k} \stackrel{\ast}{\rightharpoonup} v^1 \qquad\qquad \text{in } L^\infty(0,T;L^2(B_2)),\\
&v^{1,k} \rightharpoonup v^1 \qquad\qquad\text{in }L^2(0,T;H^1(B_2)),\\
&v^{1,k} \rightarrow v^1 \qquad\qquad\text{in }L^3(0,T;L^3(B_2)),\\ 
&{\cal J}_{1,k}(v^{1,k}) \rightarrow v^1 \quad\text{ in }L^3(0,T;L^3(B_{2^-})),
}
as $k\to \infty$. Let $v=v^1$ on $[0,T]\times B_2$. 

Then, we extend $v$ to $[0,T]\times B_4$ as follows. In a similar way to getting $v^1$, we can find a subsequence $\{(v^{2,k}, p^{2,k})\}_{k\in \NN}$ of $\{(v^{1,k}, p^{1,k})\}_{k\in \NN}$ which satisfies the following convergence:
\EQN{
&v^{2,k} \stackrel{\ast}{\rightharpoonup} v^2 \qquad\qquad \text{in } L^\infty(0,T;L^2(B_4)),\\
&v^{2,k} \rightharpoonup v^2 \qquad\qquad\text{in }L^2(0,T;H^1(B_4)),\\
&v^{2,k} \rightarrow v^2 \qquad\qquad\text{in }L^3(0,T;L^3(B_4)),\\ 
&{\cal J}_{2,k}(v^{2,k}) \rightarrow v^2\quad\text{    in }L^3(0,T;L^3(B_{4^-})),
}
as $k\to \infty$. Here, we can easily check that $v^2=v^1$ on $[0,T]\times B_2$, so that $v=v^2$ is the desired extension. By repeating this argument, we can construct a sequence $\{v^{n,k}\}$ and its limit $v$. Indeed, by the diagonal argument, $v$ can be approximated by 
\[
v^{(k)}= \begin{cases}
v^{k,k} & [0,T]\times B_{2^k},\\
0& \text{otherwise}
\end{cases}, \quad \forall k\in \NN
\]
More precisely, on each $[0,T]\times B_{2^n}$, $\{v^{(k)}\}_{k=n}^\infty$ enjoys the same convergence properties as above. This follows from that $\{v^{m,j}\}_{j\in \NN}$, $m\geq n$ is a subsequence of $\{v^{n,j}\}_{j\in \NN}$. Indeed, for each $v^{k,k}$, $k\geq n$, we can find $j_k\geq k$ such that 
\[
v^{k,k}= v^{n,j_k}. 
\]
Then, by its construction, for each $n\in \NN$, $\{v^{(k)}\}_{k=n}^\infty$ satisfies 
\begin{align}
&v^{(k)} \stackrel{\ast}{\rightharpoonup} v \qquad\qquad \text{in } L^\infty(0,T;L^2(B_{2^n})), \label{conv.weak.star} \\
&v^{(k)} \rightharpoonup v \qquad\qquad\text{in }L^2(0,T;H^1(B_{2^n})),\label{conv.weak}\\
&v^{(k)} \rightarrow v \qquad\qquad\text{in }L^3(0,T;L^3(B_{2^n})), \label{conv.strong}\\ 
&{\cal J}_{(k)}(v^{(k)}) \rightarrow v \quad\text{    in }L^3(0,T;L^3(B_{{2^n}^-})) \label{conv.moli}
\end{align}
as $k\to \infty$. 
Furthermore, since $v^\ep$ are uniformly bounded in $\cE_T$, we can easily see that $v\in \cE_T$ and $v\in U^{3,3}_T$,
\[
\norm{v}_{\cE_T} + \norm{v}_{U^{3,3}_T} \le C(\norm{v_0}_{L^2_\uloc}).
\]

Now, we construct a pressure $p$ corresponding to $v$. Using \eqref{p.ep}, we define $p^{(k)}$ by  
\EQ{\label{pk.def}
p^{(k)}(x,t)
=&-\frac 13 {\cal J}_{(k)}(v^{(k)})\cdot v^{(k)}(x,t)\Phi_{(k)}(x) \\
&+ \pv \int_{B_2}  K_{ij}(x-y) {\cal J}_{(k)}(v_i^{(k)}) v_j^{(k)}(y,t) \Phi_{(k)}(y) dy\\
&+ \pv \int_{B_2^c}  (K_{ij}(x-y)-K_{ij}(-y)) {\cal J}_{(k)}(v^{(k)}_i) v^{(k)}_j(y,t) \Phi_{(k)}(y) dy.
}
where $\Phi_{(k)} = \Phi_{\ep_k}$ for $\ep_k$ satisfying $v^{k,k} = v^{\ep_k}$. Also define
\EQ{\label{p.def}
p(x,t)  = \lim_{n \to \infty} \bar p^n(x,t)
}
where $\bar p^n(x,t)$ is defined for $|x|<2^{n}$ by
\EQ{\label{p34.dec}
\bar p^n(x,t) 
=&-\frac 13 |v(x,t)|^2 
+\pv \int_{B_2}  K_{ij}(x-y) v_i v_j(y,t) dy + \bar p^n_3+\bar p^n_4,
}
with
\EQN{
\bar p^n_3(x,t) &=\pv \int_{B_{2^{n+1}}\setminus B_2}  (K_{ij}(x-y)-K_{ij}(-y)) v_iv_j(y,t)\,  dy ,
\\
\bar p^n_4(x,t) &=\int_{B_{2^{n+1}}^c}  (K_{ij}(x-y)-K_{ij}(-y)) v_iv_j(y,t) \, dy .
}
The first two terms in $\bar p^n$ are defined in $U^{\frac32, \frac 32}_T$ and independent of $n$. Among the last two terms, $\bar p^n_4$ converges absolutely but $\bar p^n_3$ only in $U^{\frac32, \frac 32}_T$. By estimates similar to those for $p^\ep$, we get $\bar p^n_3, \bar p^n_4 \in L^{3/2}((0,T)\times B_{2^n})$ and
\[
\bar p^n_3+\bar p^n_4 = \bar p^{n+1}_3+\bar p^{n+1}_4 , \quad \text{in}\quad L^{3/2}((0,T)\times B_{2^n})
\]
Thus $\bar p^n(x,t)$ is independent of $n$ for $n> \log_2 |x|$.

Our goal is to show that the strong convergences \eqref{conv.strong}-\eqref{conv.moli} of $\{v^{(k)}\}$ gives
\EQ{\label{conv.p.st}
p^{(k)} \to {p} \quad \text{ in }L^{\frac 32}([0,T]\times B_{2^{n}}), \quad \text{ for each }n\in \NN, 
}
Let $N_{ij}^{(k)} = {\cal J}_{(k)}(v^{(k)}_i) v^{(k)}_j\Phi_{(k)}$ and $N_{ij} = v_iv_j$. For any fixed $R>0$, we have
\EQ{\label{conv.in.pres}
&\norm{N_{ij}^{(k)}-N_{ij}}_{L^{\frac 32}([0,T]\times B_R)}
\\
&\leq \norm{\left({\cal J}_{(k)}(v_i^{(k)})-v_i\right) v_j^{(k)}\Phi_{(k)}}_{L^{\frac 32}([0,T]\times B_R)}
+ \norm{v_i (v_j^{(k)} - v_j)\Phi_{(k)}}_{L^{\frac 32}([0,T]\times B_R)}\\
&\quad +\norm{v_i v_j(1-\Phi_{(k)})}_{L^{\frac 32}([0,T]\times B_R)}\\
&\lec \norm{{\cal J}_{(k)}(v^{(k)})-v}_{L^3([0,T]\times B_R)}\norm{v^{(k)}}_{L^3([0,T]\times B_R)}\\
&\quad+\norm{v^{(k)}-v}_{L^3([0,T]\times B_R)}\norm{v}_{L^3([0,T]\times B_R)}
+\norm{|v|^2(1-\Phi_{(k)})}_{L^{\frac 32}([0,T]\times B_R)}
\longrightarrow 0
}
by \eqref{conv.strong}, \eqref{conv.moli}, and Lebesgue dominated convergence theorem. Then, it provides the convergence of $p^{(k)}$ to $p$: On $[0,T]\times B_{2^{n}}$, for $m >n$,
\EQN{
p^{(k)}-p 
=&-\frac 13 \operatorname{tr}(N^{(k)}-N)
+ \pv \int_{B_2}  K_{ij}(\cdot-y) (N_{ij}^{(k)}-N_{ij})(y) dy\\
&+  \left[\pv \int_{B_{2^{n+1}}\setminus B_2}+\int_{B_{2^{m}}\setminus B_{2^{n+1}}} +\int_{B_{2^{m}}^c}\right]    (K_{ij}(\cdot-y)-K_{ij}(-y)) (N_{ij}^{(k)}-N_{ij})(y) dy\\
=& \ q_1 + q_2 + q_3 + q_4 +q_5.
}

In a similar way to getting \eqref{uni.bdd.p}, we have
\[
\norm{q_1,q_2, q_3}_{L^\frac 32([0,T]\times B_{2^{n}})}\lec_n \norm{N^{(k)}-N }_{L^\frac 32([0,T]\times B_{2^{n+1}})},
\]
and
\[
\norm{q_4}_{L^\frac 32([0,T]\times B_{2^{n}})}\lec \norm{N^{(k)}-N }_{L^\frac 32([0,T]\times B_{2^{m}})},
\]

On the other hand, using 
\[
|K_{ij}(x-y)-K_{ij}(-y)|\lec \frac {|x|}{|y|^4}
\]
we obtain%
\EQN{
\norm{q_5}_{L^\frac 32([0,T]\times B_{2^{n}})}
\lec \frac {2^{3n}}{2^m} \left(\norm{v}_{U^{3,3}_T}^2 +\norm{{\cal J}_{(k)} (v^{(k)}) }_{U^{3,3}_T}\norm{v^{(k)}}_{U^{3,3}_T}\right) \leq C(n,\norm{v_0}_{L^2_\uloc},T)\frac 1{2^m}.
}

Therefore, for fixed $n$, if we choose sufficiently large $m$, we can make $q_5$ very small in $L^\frac 32([0,T]\times B_{2^{n}})$ and then for sufficiently large $k$, $q_1$, $q_2$, $q_3$, and $q_4$ also become very small in $L^\frac 32([0,T]\times B_{2^{n}})$ because of \eqref{conv.in.pres}. This gives the desired convergence \eqref{conv.p.st} of $p^{(k)}$ to $p$.

Now, we check that $(v,p)$ is a local energy solution. It is easy to prove that $(v,p)$ solves the Navier-Stokes equation in distributional sense by using the distributional form %
of \eqref{reg.NS} for $(v^{(k)},p^{(k)})$ and the convergence \eqref{conv.weak.star}-\eqref{conv.moli} and \eqref{conv.p.st}-\eqref{conv.in.pres}. For example, 
for any $\xi \in C_c^\infty((0,T)\times \R^3;\R^3)$,
\[
\left.
\begin{split}
\int_0^T\int v^{(k)}\cdot \pd_t \xi dxdt &\to \int_0^T\int v\cdot \pd_t \xi dxdt
\\
\int_0^T\int {\cal J}_{(k)}(v^{(k)})(v^{(k)}\Phi_{(k)}) : \nb \xi dxdt
&\to \int_0^T\int v \otimes v: \nb \xi dxdt
\end{split}
\right\}
\quad \text{as}\quad k \to \infty.
\]
Since we have 
\EQN{
\int_0^t\!\int &(\De v- (v \cdot \na )v - \na p )\cdot \phi dxdt\\
\leq& \left| \int_0^t\! \int \na v \cdot \na \phi dx dt\right|
+ \left|\int_0^t\! \int v(v\cdot \na ) \phi dxdt\right|
+ \left|\int_0^t\! \int p \div \phi dxdt\right|\\
\lec& \norm{\na v}_{L^2(0,T;L^2(B_{2^n}))} \norm{\na \phi}_{L^2(0,T;L^2(\R^3))}\\
&+ \bke{\norm{v}_{L^3(0,T;L^3(B_{2^n}))}^2+ \norm{p}_{L^\frac 32(0,T;L^{\frac 32}(B_{2^n}))}} \norm{\na \phi}_{L^3(0,T;L^3(\R^3))}\\
\leq &\ C(n,T,\norm{v_0}_{L^2_\uloc})\norm{\na \phi}_{L^3(0,T;L^3(\R^3))},
}
for any $\phi\in C_c^\infty([0,T]\times B_{2^n})$, $n\in \NN$, it follows that
\EQN{
\pa_t v = \De v - (v \cdot \na )v - \na p  \in X_n
}
for any $n\in \NN$, where $X_n$ is the dual space of $L^3(0,T;{W}^{1,3}_0(B_{2^n}))$.

With this bound of $\pd_t v$, for each $n \in \NN$, we may redefine $v(t)$ on a measure-zero subset $\Si_n$ of $[0,T]$ such that the function 
\EQ{\label{weak.conti}
t \longmapsto \int_{\R^3} v(x,t)\cdot \zeta(x)\, dx
}
is continuous for any vector $\zeta\in C_c^\infty(B_{2^n})$. Redefine $v(t)$ recursively for all $n$ so that \eqref{weak.conti} is true for any $\zeta\in C_c^\infty(\R^3)$.  It is then true for any $\zeta\in L^2(\R^3)$  with a compact support using $v\in L^\infty(0,T;L^2_\uloc)$.

Furthermore, consider the local energy equality \eqref{LEI.vep00} for $(v^{(k)},p^{(k)})$ on the time interval $(0,T)$ for a non-negative $\psi\in C_c^\infty([0,T)\times \R^3)$. The first term $\int |v^{(k)}|^2 \psi(x,T) dx$ vanishes.
Taking limit infimum as $k$ goes to infinity, and using the weak convergence \eqref{conv.weak} and
the strong convergence \eqref{conv.strong}-\eqref{conv.moli} and \eqref{conv.p.st}-\eqref{conv.in.pres},
we get
\EQ{\label{LEI.v.nt}
2\int_0^T\!\!\int & |\na v|^2\psi\, dxds
\leq \int|v_0|^2\psi(\cdot,0) dx   \\
&+\int_0^T\!\!\int |v|^2(\pa_s\psi +\De\psi) +(|v|^2+2\hp) (v\cdot \na)\psi\, dxds,
}
for any non-negative $\psi\in C_c^\infty([0,T)\times \R^3)$. 

Then, for any $t\in (0,T)$ and non-negative $\ph\in C_c^\infty([0,T)\times \R^3)$, take  $\psi(x,s) = \ph(x,s) \th_\e(s)$, $\ep \ll 1$, where $\th_\e(s) =\th\left(\frac{s -t}{\ep}\right)$ for some $\th\in C^\infty(\R)$ such that $\th(s)=1$ for $s\le 0$ and $\th(s) = 0$ for $s\ge 1$, and $\th'(s)\le 0$ for all $s$. Note that $\th_\e(s) = 1$ for $s \le t$ and $\th_\e(s) =0$ for $s \ge t+\ep$. Sending $\e \to 0$ and using
\[
\int  |v(t)|^2 \ph\, dx \leq \liminf_{\ep\to 0} \int_0^t \int |v|^2\ph (-\th_\e') \,dx\,ds
\]
due to the weak local $L^2$-continuity \eqref{weak.conti},
we get 
\EQ{\label{pre.lei}
\int & |v(t)|^2 \ph\, dx + 2\int_0^t\! \int |\na v|^2\ph\, dxds\\
&\leq \int |v_0|^2 \ph(\cdot,0) dx + \int_0^t\! \int \bket{ |v|^2(\pd_s \ph+ \De \ph)  
+(|v|^2+2\hp) (v\cdot \na)\ph} dxds
}
for any $t\in (0,T)$ and non-negative $\ph\in C_c^\infty([0,T)\times \R^3)$. The local energy inequality \eqref{LEI} is a special case of 
\eqref{pre.lei} for test functions vanishing at $t=0$.

Sending $t\to 0_+$ in \eqref{pre.lei} we get $\limsup_{t \to 0_+} \int |v(t)|^2 \ph\, dx\le \int |v_0|^2 \ph(\cdot,0) dx$ for any non-negative $\ph\in C_c^\infty$. Together with the weak continuity \eqref{weak.conti}, we get  $\lim_{t \to 0_+} \int_{B_n} |v(x,t)-v_0(x)|^2 dx =0$ for any $n \in \NN$.

Finally, we consider the decomposition of the pressure. Recall that the pressure $p$ is defined recursively by \eqref{p.def}-\eqref{p34.dec}. For any $x_0\in \R^3$ define $\hp_{x_0}\in L^{\frac 32}([0,T]\times B(x_0, \frac 32))$ by \eqref{hp.def}, i.e.,
\EQN{
\hp_{x_0}(x,t)  =& -\frac 13 |v|^2 (x,t)
+\pv  \int_{B(x_0,2)}  K_{ij}(x-y) v_i v_j(y,t) dy\\
&+  \int_{B(x_0,2)^c}  (K_{ij}(x-y)-K_{ij}(x_0-y)) v_iv_j(y,t)  dy.
}
Let $c_{x_0} = p-\hp_{x_0}$. If $ B(x_0,\frac 32)\subset B_{2^n}$, then
\EQ{\label{cx0.def}
c_{x_0}(t)
=&  \int_{B_{2^{n+1}}\setminus B(x_0,2)}  K_{ij}(x_0-y) v_iv_j(y,t)  dy\\
&- \int_{B_{2^{n+1}}\setminus B_2}  K_{ij}(-y) v_iv_j(y,t)  dy\\
&+ \int_{B_{2^{n+1}}^c}  (K_{ij}(x_0-y)-K_{ij}(-y)) v_iv_j(y,t)  dy .%
}
Note that $c_{x_0} \in L^{3/2}(0,T)$, and $c_{x_0}(t)$ is independent of $x\in B(x_0,\frac 32)$, $n$, and $T$. Therefore, we get the desired decomposition \eqref{pressure.decomp} of the pressure. 
\end{proof}

\begin{remark}
Our approach in this section is similar to that in Kikuchi-Seregin \cite{KS}. However, there are two significant differences:
\begin{enumerate}
\item Since we include initial data $v_0$ not in $E^2$, we add an additional localization factor $\Phi_{(k)}$ to the nonlinearity in the localized-mollified equations \eqref{reg.NS}. Our approximation solutions $v^\ep$ live in $L^2_\uloc$ and are no longer in the usual energy class. 

\item The pressure $p$ and $c_{x_0}$ are implicit in \cite{KS}, but  are explicit in this paper. We first specify the formula \eqref{p.def} of the pressure and then justify the strong convergence and decomposition. In particular, our $c_{x_0}(t)$ is given by \eqref{cx0.def} and independent of $T$.
\end{enumerate}
\end{remark}

\begin{remark}
Estimate \eqref{est.hp} and its proof for $\hp^{\,\e}_{x_0}$ are not limited to our approximation solutions. They are in fact also valid for any local energy solution $(v,p)$ in $(0,T)$ with local pressure $\hp_{x_0}$ given by \eqref{hp.def}, that is,
\EQ{\label{est.hp2}
\norm{\hp_{x_0}}_{L^\frac32([0,t]\times B(x_0,\frac 32))} \le C \norm{v}_{U^{3,3}_t}^2, \quad \forall t<T,
}
with a constant $C$ independent of $t,T$.
\end{remark}

\section{Spatial decay estimates}\label{decay.est.sec}

Recall that our initial data $v_0\in E^2_{\si} + L^3_{\uloc,\si}$. In Sections \ref{decay.est.sec} and \ref{global.sec}, we decompose 
\EQ{\label{v0.dec}
v_0 = w_0+u_0, \quad w_0\in E^2_\si,  \quad u_0\in L^3_{\uloc,\si}.
}
Our goal in this section is to show that, although the solution $v$ has no spatial decay, its difference from the linear flow, $w= v-V$, $V(t)=e^{t\De}u_0$, does decay due to the decay of the oscillation of $u_0$. Here, the oscillation decay of $u_0$ follows from that of $v_0$ and $w_0\in E^2$. The main task is to show that the contribution from the nonlinear source term 
\[
(V \cdot \nb) V = \nb \cdot (V \otimes V)
\]
has decay, although $V$ itself does not. 
On the other hand, we also need the decay of the pressure. However, $\hp_{x_0}$ given by \eqref{hp.def} 
does not decay. Thus
we need a different decomposition of the pressure $p$ near each point $x_0\in \R^3$.

\begin{lemma}[New pressure decomposition]\label{new.decomp}
Let $v_0 = w_0+u_0$ with $ w_0\in E^2_\si$ and $u_0\in L^3_{\uloc,\si}$. Let $(v,p)$ be any local energy solution of \eqref{NS} with initial data $v_0$ in $\R^3 \times (0,T)$, $0<T<\infty$. 
Then, for each $x_0\in \R^3$, we can find $q_{x_0}\in L^\frac 32(0,T)$ such that 
\[
p(x,t) =\cp_{x_0}(x,t) +q_{x_0}(t) \quad \text{in }L^\frac 32((0,T)\times B(x_0, \tfrac32))
\]
where
\EQ{\label{cp}
\cp_{x_0}
=&-\frac 13 (|w|^2 + 2w\cdot V) 
+ \pv \int_{B(x_0,2)}  K_{ij}(\cdot-y) (w_iw_j + V_iw_j+ w_iV_j)(y) dy\\
&+ \int_{B(x_0,2)^c}  (K_{ij}(\cdot-y)-K_{ij}(x_0-y)) (w_iw_j + V_iw_j+ w_iV_j)(y) dy \\
&+ \int K_i(\cdot-y)[ (V\cdot \na)V_i]\rho_2 (y) dy \\
&+ \int (K_{ij}(\cdot -y)-K_{ij}(x_0-y)) V_iV_j(1-\rho_2)(y)dy \\
&+ \int (K_{i}(\cdot -y)-K_{i}(x_0-y)) V_iV_j(\pa_j\rho_2)(y)dy.
}
Here, $w=v-V$, $V(t)=e^{t\De}u_0$, $K_i = \pa_i K$, $K_{ij}=\pa_{ij}K$, $K(x)= \frac 1{4\pi|x|}$, and $\rho_2 = \Phi(\frac{\cdot-x_0}{2})$.
\end{lemma}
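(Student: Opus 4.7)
The plan is to combine three ingredients: (1) the known decomposition $p = \hp_{x_0} + c_{x_0}(t)$ from Definition~\ref{les}; (2) the algebraic splitting $v_iv_j = N^w_{ij} + V_iV_j$ with $N^w_{ij}:=w_iw_j + V_iw_j + w_iV_j$; and (3) a careful integration by parts, using $\nabla\cdot V=0$, that converts the $V\otimes V$ pressure source into the mixed one-derivative/two-derivative form appearing in the last three lines of \eqref{cp}. Once $v_iv_j$ is split, the $N^w$ contribution to $\hp_{x_0}$ reproduces exactly the first two lines of $\cp_{x_0}$ (note $\mathrm{tr}(N^w) = |w|^2 + 2 V\cdot w$), so the whole issue reduces to comparing the $V\otimes V$ part of $\hp_{x_0}$, call it $\hat P^V$, with the sum $A+B+C$ of the last three lines of \eqref{cp}.

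The first step is to verify that every integrand in \eqref{cp} is well-defined in $L^{3/2}((0,T);L^{3/2}(B(x_0,\tfrac32)))$. Lemma~\ref{lemma23} applied to $u_0\in L^3_{\uloc}$ gives $\|V(t)\|_\infty + \sqrt t\,\|\nabla V(t)\|_\infty \lec (1+t^{-1/2})\|u_0\|_{L^3_{\uloc}}$, which is $L^3_t$-integrable near $t=0$; together with $w\in \cE_T$ this controls the $N^w$ pieces via the Calder\'on--Zygmund estimate \eqref{est.hp2}. Term $A$ is finite because $|K_i(x-y)|\lec|x-y|^{-2}$ is locally integrable in $\R^3$. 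For $B$ and $C$, the subtractions $K_{ij}(x-y)-K_{ij}(x_0-y)$ and $K_i(x-y)-K_i(x_0-y)$ yield the far-field decays $|x-x_0|/|x_0-y|^4$ and $|x-x_0|/|x_0-y|^3$, which suffice to make the integrals against the merely bounded $V\otimes V$ absolutely convergent.

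The heart of the argument is the integration by parts. Writing $(V\cdot\nabla)V_i = \partial_{y_j}(V_iV_j)$ by incompressibility and integrating by parts in $y$, using $\partial_{y_j}[K_i(x-y)] = -K_{ij}(x-y)$ together with the distributional identity $K_{ij} = \mathrm{p.v.}(K_{ij}) - \tfrac13\delta_{ij}\delta$, yields for $x\in B(x_0,\tfrac32)$ (where $\rho_2(x)=1$)
\[
A = \mathrm{p.v.}\!\int K_{ij}(x-y)\rho_2\, V_iV_j\,dy \;-\; \tfrac13|V(x,t)|^2 \;-\; \int K_i(x-y)\,V_iV_j(\partial_j\rho_2)(y)\,dy.
\]
Adding $B$ and $C$, the $\partial_j\rho_2$ integrals in $A$ and $C$ collapse into the $x$-independent piece $-\int K_i(x_0-y)V_iV_j\,\partial_j\rho_2\,dy$; splitting $\mathrm{p.v.}\!\int K_{ij}\rho_2 V_iV_j\,dy$ as an integral over $B(x_0,2)$ (keeping the p.v.) plus one over the annulus $B(x_0,3)\setminus B(x_0,2)$, and rearranging the latter with the $1-\rho_2$ term of $B$, produces the $\hat P^V$ formula plus a further $x$-independent annular piece $\int_{B(x_0,3)\setminus B(x_0,2)} K_{ij}(x_0-y)\rho_2 V_iV_j\,dy$. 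Thus $A+B+C = \hat P^V + d^V_{x_0}(t)$ for a function $d^V_{x_0}(t)$ depending only on $t$, and combining with the $N^w$ terms gives $\cp_{x_0} = \hp_{x_0} + d^V_{x_0}(t)$ on $B(x_0,\tfrac32)$. Setting $q_{x_0}(t):=c_{x_0}(t)-d^V_{x_0}(t)$ yields the claim; $q_{x_0}\in L^{3/2}(0,T)$ since $c_{x_0}\in L^{3/2}(0,T)$ by Definition~\ref{les} and $d^V_{x_0}$ inherits the same integrability from the $V$-bounds of Step~1.

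The main obstacle is the third step: justifying the integration by parts for the singular kernel $K_i$ and correctly tracking the $-\tfrac13|V|^2$ that arises from the delta contribution in $K_{ij}$. A clean route is to mollify $V$, excise a ball $B_\varepsilon(x)$, integrate by parts cleanly in the regularized setting, and pass to the limit $\varepsilon\to 0$ using the Calder\'on--Zygmund boundedness of the Riesz transform on $L^{3/2}$, following the same pattern employed in the pressure analysis of Section~\ref{loc.ex.sec}; the bookkeeping of the annular cutoff terms, where all of the $\rho_2$-dependence collapses into a $t$-only correction, is the principal algebraic subtlety.
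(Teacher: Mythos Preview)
Your proposal is correct and follows essentially the same approach as the paper. The only cosmetic difference is the direction of the computation: the paper starts from $\hp_{x_0}$, splits off the $G_{ij}=V_iV_j$ contribution, and integrates by parts the principal-value integral $-\tfrac13\tr G+\pv\int K_{ij}(\cdot-y)G_{ij}\rho_2\,dy=\int K_i(\cdot-y)\partial_j[G_{ij}\rho_2]\,dy$ to arrive at $\cp_{x_0}$ plus the same two $t$-only correctors $\td q_{x_0}$ and $\widehat q_{x_0}$ that you call $d^V_{x_0}$; you instead start from the last three lines of $\cp_{x_0}$ and run the identical integration by parts in reverse to recover $\hat P^V$. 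Your explicit invocation of the distributional identity $\partial_i\partial_j K=\pv K_{ij}-\tfrac13\delta_{ij}\delta$ and the $\varepsilon$-excision justification are slightly more detailed than the paper's one-line ``integrating by parts the principle value integral,'' but the underlying argument is the same.
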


\begin{proof} Consider $(x,t)\in B(x_0, \frac 32)\times (0,T)$.
Let $F_{ij} = w_iw_j + V_iw_j+ w_iV_j$ and $G_{ij} = V_iV_j$. 
Substituting $v=V+w$ in \eqref{hp.def}, we get
\begin{align}
\hp_{x_0} &= p_{x_0}^F + p_{x_0}^G \nonumber\\
\begin{split} \label{pFx0}
p_{x_0}^F &= -\frac 13 \tr F 
+ \pv \int_{B(x_0,2)}  K_{ij}(\cdot-y) F_{ij}(y) dy
\\
&\quad + \int_{B(x_0,2)^c}  (K_{ij}(\cdot-y)-K_{ij}(x_0-y)) F_{ij}(y) dy
\end{split}
\end{align}
and
\EQN{
p_{x_0}^G &= -\frac 13 \tr G 
+ \pv \int_{B(x_0,2)}  K_{ij}(\cdot-y) G_{ij}(y) dy
\\
&\quad + \int_{B(x_0,2)^c}  (K_{ij}(\cdot-y)-K_{ij}(x_0-y)) G_{ij}[\rho_2+ (1-\rho_2)](y) dy
\\
 &= -\frac 13 \tr G 
+ \pv \int  K_{ij}(\cdot-y) G_{ij}\rho_2(y) dy + p_{x_0,\mathrm{far}}^G + \td q_{x_0}(t) ,
}
where
\EQN{
p_{x_0,\mathrm{far}}^G &= \int  (K_{ij}(\cdot-y)-K_{ij}(x_0-y)) G_{ij}(1-\rho_2)(y) dy,
\\
\td q_{x_0}(t) &=  -\int_{B(x_0,2)^c}  K_{ij}(x_0-y) G_{ij}\rho_2(y) dy.
}
Integrating by parts the principle value integral, we get
\EQN{
p_{x_0}^G &= \int%
K_{i}(\cdot-y)\pd_j[ G_{ij}\rho_2(y) ]dy
 + p_{x_0,\mathrm{far}}^G + \td q_{x_0}(t).
}
Note $\pd_j[ G_{ij}\rho_2] = (V\cdot \nb V_i)\rho_2 + G_{ij} \pd_j \rho_2$. Denote
\[
\widehat q_{x_0}(t)=\int K_{i}(x_0-y) V_iV_j(\pa_j\rho_2)(y)dy.
\]
We get
\EQ{\label{hp.cp}
\hp_{x_0}(x,t) &= p_{x_0}^F
+ \int K_i(\cdot-y) (V\cdot \na)V_i\rho_2 (y) dy + p^G_{x_0,\text{far}} + \td q_{x_0}(t)
\\
&\quad + \int (K_{i}(\cdot -y)-K_{i}(x_0-y)) V_iV_j(\pa_j\rho_2)(y)dy+  \widehat q_{x_0}(t)
\\
&=\cp_{x_0}(x,t) + \td q_{x_0}(t) +  \widehat q_{x_0}(t). 
}
Thus we have $p(x,t) =\cp_{x_0}(x,t) + q_{x_0}(t)$ with
\[
q_{x_0}(t)  = c_{x_0}(t) + \td q_{x_0}(t) +  \widehat q_{x_0}(t). 
\]
Note that using $\norm{G}_{U^{\infty,1}_T}\le \norm{V}_{U^{\infty,2}_T}^2$ and $|x_0-y|>2$ for $y\in \supp(\pa_j\rho_2)$, we have
\EQ{\label{q.est}
\norm{\td q_{x_0}(t)}_{L^{\infty}(0,T)} 
+\norm{\widehat q_{x_0}(t)}_{L^{\infty}(0,T)}
&\lec \norm{\int_{B(x_0,3)\setminus B(x_0,2)}  |G_{ij}|(y) dy}_{L^{\infty}(0,T)}\\
&\lec \norm{G}_{L^{\infty}(0,T;L^1(B(x_0,3)))} \lec \norm{V}_{U^{\infty,2}_T}^2.
}
Since $\td q_{x_0}(t) +  \widehat q_{x_0}(t)$ is in $L^{3/2}(0,T)$, so is $q_{x_0}(t)$.
\end{proof}

Although $\nb V$ has spatial decay, it is not uniform in $t$. Thus, to show the spatial decay of $w$, we will first show \eqref{eq1.5}, i.e., the smallness of $w$ in $L^2_\uloc$ at far distance for a short time in Lemma \ref{th0730b}. For that we need Lemmas \ref{adm}, \ref{th0803} and \ref{th:LEI.w.0}.
\begin{lemma}\label{adm}
For $u_0 \in L^3(\R^3)$, if $\frac 2s + \frac 3q = 1$ and $3\le q< 9$, then
\[
\norm{ e^{t \De}u_0}_{L^s(0,\infty;L^q(\R^3))} \leq C_q \norm{u_0}_{L^3(\R^3)}.
\]
\end{lemma}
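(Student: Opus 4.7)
The lemma is a Strichartz-type estimate for the heat semigroup at the critical $L^3$ regularity in $\R^3$. My plan is in two steps.

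First I would record the classical pointwise-in-$t$ smoothing bound
\[
\|e^{t\Delta}u_0\|_{L^q(\R^3)} \le C\, t^{-\frac{3}{2}\left(\frac{1}{3}-\frac{1}{q}\right)}\|u_0\|_{L^3}, \qquad 3\le q\le \infty,
\]
and notice that the admissibility relation $\frac{2}{s}+\frac{3}{q}=1$ makes the exponent exactly $-1/s$, so
\[
\|e^{t\Delta}u_0\|_{L^q} \le C\, t^{-1/s}\|u_0\|_{L^3}.
\]
The endpoint $q=3$, $s=\infty$ then follows at once from the $L^3$-contractivity of the heat semigroup (and is actually stronger than the bound above).

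For $3<q<9$ (equivalently $3<s<\infty$) the pointwise bound only places $\|e^{t\Delta}u_0\|_{L^q}$ in the weak space $L^{s,\infty}(0,\infty)$, since $t^{-1/s}\notin L^s(0,\infty)$. The content of the lemma is the upgrade to strong $L^s$ in time, and I would carry this out by vector-valued real interpolation. Choose two auxiliary admissible pairs $(s_j,q_j)$, $j=0,1$, with $3\le q_0<q<q_1<9$. The first step gives weak-type bounds
\[
T\colon L^3(\R^3) \longrightarrow L^{s_j,\infty}\!\bigl(0,\infty;\,L^{q_j}(\R^3)\bigr),\qquad j=0,1,\quad Tu_0:=e^{t\Delta}u_0,
\]
and real interpolation on the target, using the identification
\[
\bigl(L^{s_0,\infty}(L^{q_0}),\; L^{s_1,\infty}(L^{q_1})\bigr)_{\theta,s} = L^s(L^q)
\]
for the $\theta\in(0,1)$ that makes the intermediate pair equal to $(s,q)$, produces the desired bound $T\colon L^3\to L^s(0,\infty;L^q)$.

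The only nontrivial piece is the weak-to-strong passage, and the restriction $q<9$ ultimately comes from it. The same conclusion can be phrased more structurally: via the heat characterization of Besov spaces one has $\|u_0\|_{\dot B^{3/q-1}_{q,s}}\sim \|e^{t\Delta}u_0\|_{L^s(0,\infty;L^q)}$, so the lemma is equivalent to the embedding $L^3\hookrightarrow \dot B^{3/q-1}_{q,s}$, which factors as $L^3\hookrightarrow \dot B^{0}_{3,s}\hookrightarrow \dot B^{3/q-1}_{q,s}$ via $L^p\hookrightarrow \dot B^{0}_{p,s}$ for $s\ge\max(p,2)$ and the Besov--Sobolev inclusion. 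The requirement $s\ge 3$ here is precisely $q\le 9$, and the strict inequality in the hypothesis gives a convenient margin. I expect the interpolation bookkeeping (or equivalently the Besov embedding chain) to be the main obstacle; everything else is the classical smoothing estimate and scaling.
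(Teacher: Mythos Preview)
The paper does not prove this lemma; it simply cites Giga \cite{Gi86}. Your argument is correct and is essentially the one in Giga's paper: the $L^3\to L^q$ smoothing bound gives weak-$L^s$ in time, and real interpolation upgrades this to strong $L^s$, with the constraint $s\ge 3$ (equivalently $q\le 9$) arising exactly as you identify. One minor technical remark: in the form you wrote it, the interpolation identity $\bigl(L^{s_0,\infty}(L^{q_0}),\,L^{s_1,\infty}(L^{q_1})\bigr)_{\theta,s}=L^s(L^q)$ has \emph{different} Banach-space targets on the two ends and so requires a Lions--Peetre-type result to justify; an equivalent and slightly cleaner bookkeeping is to fix the spatial exponent $q$ and interpolate on the \emph{source} side between $L^{p_0}$ and $L^{p_1}$ with $p_0<3<p_1\le q$, which reduces to ordinary Lorentz-space interpolation with a fixed Banach target $L^q$. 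Your Besov-space reformulation is also correct and furnishes an independent, self-contained proof.
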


This is proved in Giga \cite[196--197]{Gi86}. The case $q=9$ is also true according to 
\cite[Acknowledgment]{Gi86},  but 
there is no detailed proof.

\begin{lemma}\label{th0803}
Suppose $u_0 \in L^2_\uloc$ and $u_0 \in L^3(B(x_0,3))$. Then, $V =e^{t\De}u_0$ satisfies
\EQ{\label{th0803-1}
\norm{ V}_{L^8(0,T; L^4(B(x_0,\frac 32)))}
\lec  \norm{ u_0}_{L^3(B(x_0,3))} + T^{\frac 18} \norm{u_0}_{L^2_\uloc} .
}
\end{lemma}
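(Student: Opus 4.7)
The plan is to split $u_0$ into a localized piece (which is in $L^3(\R^3)$) and a far piece (which is in $L^2_\uloc$ and vanishes near $x_0$), and estimate the two resulting heat extensions by different means: the Giga-type mixed-norm estimate of Lemma~\ref{adm} for the near piece, and a crude pointwise heat-kernel bound of the far piece.

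Concretely, fix $x_0 \in \R^3$ and let $\phi_{x_0}\in C_c^\infty(\R^3)$ satisfy $\phi_{x_0}=1$ on $B(x_0,2)$ and $\supp\phi_{x_0}\subset B(x_0,3)$. Write $u_0 = u_{0,1} + u_{0,2}$ with $u_{0,1}=u_0 \phi_{x_0}$ and $u_{0,2}=u_0(1-\phi_{x_0})$, and set $V_i = e^{t\Delta} u_{0,i}$. Then $u_{0,1}\in L^3(\R^3)$ with $\norm{u_{0,1}}_{L^3(\R^3)}\le \norm{u_0}_{L^3(B(x_0,3))}$, while $u_{0,2}$ is supported in $B(x_0,2)^c$ and satisfies $\norm{u_{0,2}}_{L^2_\uloc}\lec \norm{u_0}_{L^2_\uloc}$.

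For $V_1$, I apply Lemma~\ref{adm} with $q=4$, $s=8$ (which satisfies $\tfrac{2}{s}+\tfrac{3}{q}=1$ and $3\le q<9$), obtaining
\[
\norm{V_1}_{L^8(0,T;L^4(B(x_0,\tfrac 32)))}
\le \norm{V_1}_{L^8(0,\infty; L^4(\R^3))}
\lec \norm{u_{0,1}}_{L^3(\R^3)}
\le \norm{u_0}_{L^3(B(x_0,3))}.
\]
For $V_2$, the key point is that for $x\in B(x_0,\tfrac32)$ and $y\in\supp u_{0,2}\subset B(x_0,2)^c$ one has $|x-y|\ge \tfrac12$ and $|x-y|\sim |x_0-y|$ at infinity. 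Using the universal bound $H_t(z)\lec (|z|+\sqrt t)^{-4}\lec |z|^{-4}$ for $|z|\ge \tfrac12$ (which follows from $H_t(z)\le C_N(|z|+\sqrt t)^{-3-N}$ for any $N\ge 0$, obtained by absorbing powers of $|z|^2/t$ into the Gaussian), and repeating the dyadic summation in the proof of Lemma~\ref{lemma24}, I get
\[
\sup_{t>0}\ \norm{V_2(\cdot,t)}_{L^\infty(B(x_0,\frac32))}
\lec \sum_{k=1}^\infty \frac{1}{2^{4k}}\sum_{j=1}^{J_k}\int_{B(x^k_j,1)}|u_0(y)|\,dy
\lec \norm{u_0}_{L^2_\uloc},
\]
uniformly in $t>0$. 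Consequently,
\[
\norm{V_2}_{L^8(0,T;L^4(B(x_0,\frac32)))}
\lec T^{1/8}\,\norm{V_2}_{L^\infty((0,T)\times B(x_0,\frac32))}
\lec T^{1/8}\,\norm{u_0}_{L^2_\uloc}.
\]
Adding the two bounds gives \eqref{th0803-1}.

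The only mildly subtle point is the uniform-in-$t$ pointwise bound on $V_2$: one has to justify that the Gaussian kernel decays like $|x-y|^{-4}$ independently of $t$ when $|x-y|$ is bounded below, which is where the separation $|x-y|\ge \tfrac12$ (built into the choice of cutoff radii $2$ and $\tfrac32$) is essential. Everything else is direct application of Lemma~\ref{adm} and the dyadic summation template already established in Lemma~\ref{lemma24}.
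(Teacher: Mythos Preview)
Your treatment of $V_1$ is fine and matches the paper. The problem is in your treatment of $V_2$: the claimed pointwise bound
\[
H_t(z)\lec (|z|+\sqrt t)^{-4}
\]
is \emph{false}. The correct (and sharp) bound is $H_t(z)\lec (|z|+\sqrt t)^{-3}$, which is exactly the $l=k=0$ case of the Oseen estimate \eqref{pt.est.oseen}. To see sharpness, take $t=|z|^2$: then $H_t(z)\sim |z|^{-3}$, while $(|z|+\sqrt t)^{-4}\sim |z|^{-4}$, so the inequality fails for large $|z|$. Your justification ``absorbing powers of $|z|^2/t$ into the Gaussian'' only yields bounds of the form $H_t(z)\lec C_N\,t^{N-3/2}|z|^{-2N}$, which are not of the shape $(|z|+\sqrt t)^{-3-N}$. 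With the correct decay $|z|^{-3}$, your dyadic summation gives $\sum_k 2^{-3k}\cdot J_k\sim\sum_k 2^{-3k}\cdot 2^{3k}=\infty$, so the argument as written does not close.

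The paper circumvents this by not going through $L^\infty$ at all for $V_2$. Instead it uses the Sobolev embedding on the ball,
\[
\norm{V_2}_{L^4(B(x_0,\frac32))}\lec \norm{\nabla V_2}_{L^2(B(x_0,\frac32))}+\norm{V_2}_{L^2(B(x_0,\frac32))}.
\]
The second term is handled by the $L^2_\uloc\to L^2_\uloc$ boundedness of $e^{t\Delta}$ (Lemma~\ref{lemma23} with $p=q=2$), which carries no time singularity, giving the factor $T^{1/8}$ after integrating in time. The first term uses the \emph{gradient} of the heat kernel, for which one genuinely has $|\nabla H_t(z)|\lec (|z|+\sqrt t)^{-4}\le |z|^{-4}$; this extra power of $|z|^{-1}$ makes the dyadic sum $\sum_k 2^{-4k}\cdot 2^{3k}$ convergent. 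The passage to the gradient is the key point you are missing. (Your asserted conclusion $\sup_{t>0}\norm{V_2(\cdot,t)}_{L^\infty(B(x_0,3/2))}\lec\norm{u_0}_{L^2_\uloc}$ is in fact true, but it requires a different argument---e.g.\ splitting $t\le 1$ and $t\ge 1$ and using the Gaussian tail from $|x-y|\ge\tfrac12$ for small $t$---not the uniform $|z|^{-4}$ bound you invoke.)
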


\begin{proof}
Let $\phi (x)=\Phi(\frac{x-x_0}2)$. Decompose
\[
u_0= u_0\phi + u_0(1-\phi) =: u_1 + u_2. 
\]

By Lemma \ref{adm},
\EQ{\label{est.v1}
\norm{e^{t\De}u_1}_{L^8(0,T; L^4(B(x_0,\frac32)))} 
\leq \norm{ e^{t\De} u_1}_{L^8(0,T;L^4(\R^3))} 
\lec \norm{u_1}_{L^3(\R^3)} 
\le  \norm{u_0}_{L^3(B(x_0,3))}.
}

On the other hand, we have
\EQN{
\norm{e^{t\De}u_2}_{L^8(0,T; L^4(B(x_0,\frac32)))} 
&\lec  \norm{\na e^{t\De}u_2}_{L^8(0,T; L^2(B(x_0,\frac32)))}
+\norm{e^{t\De}u_2}_{L^8(0,T; L^2(B(x_0,\frac32)))}.
}
Obviously, 
\[
\norm{e^{t\De}u_2}_{L^8(0,T; L^2(B(x_0,\frac32)))}
\lec T^\frac 18 \norm{e^{t\De}u_2}_{L^\infty(0,T; L^2_\uloc)}
\lec T^\frac 18 \norm{u_2}_{L^2_\uloc}.
\]
Using $\supp(u_2)\subset B(x_0,2)^c$ and heat kernel estimate, we get
\EQN{
\norm{\na e^{t\De}u_2}_{L^8(0,T;L^2(B(x_0,\frac 32)))}
&\lec T^\frac 18 \norm{\na e^{t\De}u_2}_{L^\infty((0,T)\times B(x_0,\frac32))}
\\
&\lec T^{\frac 18}\int_{B(x_0,2)^c}\frac 1{|x_0-y|^4} |u_0(y)|dy\\
&\lec T^{\frac 18}\sum_{k=1}^\infty \int_{B(x_0,2^{k+1})\setminus B(x_0,2^{k})}\frac 1{{2^{4k}}} |u_0(y)|dy\\
&\lec T^{\frac 18} \norm{u_0}_{L^2_\uloc} .
}
Therefore, we obtain
\[
\norm{e^{t\De}u_2}_{L^8(0,T;L^4(B(x_0,\frac 32)))} \lec T^\frac 18\norm{u_0}_{L^2_\uloc}.
\]
Together with \eqref{est.v1}, we get
\eqref{th0803-1}.
\end{proof}

The perturbation $w=v-V$, $V(t)=e^{t\De}u_0$, satisfies the {\it perturbed Navier-Stokes equations} in the sense of distributions,
\begin{equation}\label{PNS}%
\begin{cases}
\pa_t w -\De w + (V+w)\cdot \na (V+w) + \na p = 0   \\ 
\div w =0 \\
w|_{t=0}=w_0 .
\end{cases}
\end{equation}
It also satisfies the following local energy inequality for test functions supported away from $t=0$.

\begin{lemma}[Local energy inequality for $w$]\label{th:LEI.w.0}
Let $v_0 , u_0 \in L^2_{\uloc,\si}$.
  Let $(v,p)$ be any local energy solution of \eqref{NS} with initial data $v_0$ in $\R^3 \times (0,T)$, $0<T<\infty$.
Then $w(t)=v(t)-e^{t\De}u_0$ satisfies
\EQ{ \label{LEI.w}
\int & |w|^2 \ph(x,t)\, dx
+ 2\int_{0}^t\!\int |\na w|^2 \ph\, dxds \\ 
& \leq %
\int_{0}^t\!\int |w|^2 (\pd_s\ph+\De \ph +v\cdot \na\ph )\,dxds\\
&\quad + \int_{0}^t\!\int 2 p w \cdot \na \ph\, dxds
+ \int_{0}^t\!\int  2V\cdot (v \cdot \na )(w\ph)\, dxds,
}
for any non-negative $\ph \in C_c^\infty((0,T)\times \R^3)$
and any $t\in (0,T)$.%
\end{lemma}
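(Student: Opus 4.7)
My plan is to derive the inequality for $w=v-V$ by combining three ingredients: the LEI \eqref{LEI} for $v$ from Definition~\ref{les}(iv), an \emph{exact} energy identity for $V=e^{t\De}u_0$, and an \emph{exact} cross-term identity obtained by testing the distributional Navier--Stokes equation for $v$ against $2V\ph$. Since $u_0\in L^2_{\uloc,\si}$, Lemma~\ref{lemma23} ensures $V$ is smooth on $[\de,T]\times\R^3$ for any $\de>0$, with all derivatives bounded there, and $\div V=0$. Because $\ph\in C_c^\infty((0,T)\times \R^3)$ is supported away from $t=0$, the product $V\ph$ is a legitimate smooth, compactly supported, divergence-free test field, even though $V$ itself has no decay.

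First, I apply the LEI \eqref{LEI} with $\xi=\ph$. Then I multiply $\pa_s V=\De V$ by $2V\ph$ and integrate by parts in space and time (using $\ph(\cdot,0)=0$) to obtain the exact identity
\[
\int |V|^2 \ph(t)\,dx+2\int_0^t\!\!\int |\na V|^2 \ph\,dxds=\int_0^t\!\!\int |V|^2(\pa_s\ph+\De\ph)\,dxds.
\]
Subtracting this identity from the LEI for $v$ and using $|v|^2=|w|^2+2w\cdot V+|V|^2$ together with $|\na v|^2=|\na w|^2+2\na w:\na V+|\na V|^2$ gives a first bound for $w$ that still carries the cross terms $\int 2w\cdot V\,\ph(t)\,dx$ and $4\int\!\int \na w:\na V\,\ph$.

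To remove those cross terms, I test the distributional equation $\pa_s v-\De v+(v\cdot\na)v+\na p=0$ against $2V\ph$. Integrating by parts in time, using $\pa_s V=\De V$, and using $\div V=\div v=0$, I obtain the exact identity
\[
\int 2v\cdot V\,\ph(t)\,dx+4\int_0^t\!\!\int \na v:\na V\,\ph\,dxds=\int_0^t\!\!\int \bkt{2v\cdot V(\pa_s\ph+\De\ph)+2(v\otimes v):\na(V\ph)-2pV\cdot\na\ph}\,dxds.
\]
Subtracting the previous identity from this one replaces $v$ by $w$ on the left-hand side. All the integrations by parts are rigorously justified by $v\in \cE_T$, $p\in L^{3/2}_\loc$, and $V\ph\in C_c^\infty((0,T)\times\R^3)$.

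Combining the three relations so that every $V$-only term cancels (via the heat energy identity) and every $w\cdot V$ cross term cancels (via the identity from testing with $2V\ph$) yields an inequality for $\int |w|^2 \ph(t)\,dx$ on the left and $2\int_0^t\!\int|\na w|^2\ph$. The remaining pressure contribution is $\int\!\int 2p(v-V)\cdot\na \ph=\int\!\int 2pw\cdot\na\ph$, as required. After substituting $v=w+V$ in the nonlinear contributions and integrating by parts using $\div v=\div V=0$, the leftover cross terms consolidate into $\int\!\int |w|^2 v\cdot\na\ph+2V\cdot(v\cdot\na)(w\ph)$, which produces \eqref{LEI.w}. I expect the main obstacle to be this last algebraic reassembly: one must keep careful track of several integrations by parts and cancellations among $v$--$V$ cross products of the types $(v\cdot\na)V\cdot w$, $(v\cdot\na)w\cdot V$, and $|V|^2 v\cdot\na\ph$; by comparison, the pressure manipulation is immediate once $\div V=0$ is invoked, and the passage of the exact identities to the final $\leq$ comes solely from the inequality sign inherited from the LEI for $v$.
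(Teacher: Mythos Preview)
Your approach is correct and essentially matches the paper's, which simply states that \eqref{LEI} and \eqref{LEI.w} differ by an exact equality obtained by testing the $V$-equation with $2v\ph$ and the $w$-equation \eqref{PNS} with $2V\ph$; your packaging via the heat energy identity for $V$ together with testing \eqref{NS} against $2V\ph$ is an equivalent rearrangement of the same equality. One minor slip: $V\ph$ is \emph{not} divergence-free (indeed $\div(V\ph)=V\cdot\nabla\ph$), but since your cross-term identity correctly retains the pressure contribution $-2pV\cdot\nabla\ph$, the argument is unaffected.
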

Note that $\ph$ vanishes near $t=0$. If $\ph$ does not vanish near $t=0$, the last integral in \eqref{LEI.w} may not be defined.

\begin{proof}
Recall that we have the local energy inequality \eqref{LEI} for $(v,p)$. The equivalent form for $(w,p)$ is
exactly 
\eqref{LEI.w}. 
Indeed, \eqref{LEI} and \eqref{LEI.w} are equivalent because they differ by an equality which is the sum of the weak form of $V$-equation 
with $2v \ph$ as the test function and the weak form of the $w$-equation \eqref{PNS} with $2V \ph$ as the test function, after suitable integration by parts.
This equality can be proved because $V$ and $\nb V$ are in $L^\infty_\loc(0,T; L^\infty(\R^3))$, and $\ph$ has a compact support in space-time.
\end{proof}

For $r>0$, let 
\[
\chi_r(x) = 1 - \Phi\bke{\frac xr},
\]
so that $\chi_r(x)=1$ for $|x|\ge 2r$ and $\chi_r(x)=0$ for $|x|\le r$.

\begin{lemma}\label{th0730b}
Let $v_0 = w_0+u_0$ with $ w_0\in E^2_\si$ and $u_0\in L^3_{\uloc,\si}$.
  Let $(v,p)$ be any local energy solution of \eqref{NS} with initial data $v_0$ in $\R^3 \times (0,T)$, $0<T<\infty$.
Then, there exist $T_0=T_0(\norm{v_0}_{L^2_\uloc}) \in (0,1)$ and 
 $C_0=C_0(\norm{w_0}_{L^2_\uloc}, \norm{u_0}_{L^3_\uloc})>0$ such that
$w(t)=v(t)-e^{t\De} u_0$ satisfies
\EQ{\label{th0730b-2}
 \norm{w(t)\chi_R}_{L^2_\uloc} \leq C_0 (t^\frac 1{20} + \norm{w_0\chi_R}_{L^2_\uloc}),
}
for any $R>0$ and any $t \in (0,T_1)$, $T_1 =\min(T_0,T)$.
\end{lemma}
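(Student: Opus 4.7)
Fix $x_0\in\R^3$ and set $\psi(x) = \phi_{x_0}(x)\chi_R(x)$, where $\phi_{x_0}(x) = \Phi(x-x_0)$ is a standard bump supported in $B(x_0,3/2)$ and equal to $1$ on $B(x_0,1)$. The plan is to apply the local energy inequality for $w$ of Lemma \ref{th:LEI.w.0} with test function $\ph(x,s) = \psi(x)^2 \eta_\e(s)$, where $\eta_\e\in C_c^\infty((0,T))$ smoothly rises from $0$ on $[0,\e/2]$ to $1$ on $[\e,t]$, and then pass to $\e\to 0$. The time-derivative term $\int_0^t\int |w|^2\psi^2 \eta'_\e\,dx\,ds$ concentrates near $s=0$ and converges to $\int |w_0|^2\psi^2\,dx$ by the $L^2_\loc$-continuity $w(s)\to w_0$ at $s=0$, which follows from Definition \ref{les}(iii) for $v$ and the continuity of $e^{s\De}:L^2_\uloc\to L^2_\loc$ applied to $V$. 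This yields
\[
\int |w(t)|^2\psi^2 + 2\int_0^t\!\!\int |\na w|^2\psi^2 \le \int |w_0|^2\psi^2 + I_1+I_2+I_3+I_4,
\]
with $I_1 = \int_0^t\int |w|^2\De\psi^2$, $I_2 = \int_0^t\int |w|^2 v\cdot\na\psi^2$, $I_3 = 2\int_0^t\int p\,w\cdot\na\psi^2$, and $I_4 = 2\int_0^t\int V\cdot(v\cdot\na)(w\psi^2)$.

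The terms $I_1$ and $I_2$ are controlled by H\"older using $\|\na^k\psi^2\|_\infty\lec 1$, the uniform $\cE_T$-bound on $v$ and $w$ from Theorem \ref{loc.ex}, and the parabolic embedding $\cE_T\hookrightarrow L^{10/3}_{t,x,\uloc}$; each is bounded by a positive power of $t$ times a constant depending on $\|v_0\|_{L^2_\uloc}$. For $I_3$, the key input is the new pressure decomposition of Lemma \ref{new.decomp}: writing $p = \cp_{x_0} + q_{x_0}$, the constant part drops out because $\div w=0$ gives $\int w\cdot\na\psi^2\,dx = 0$. The remaining $\int \cp_{x_0}\,w\cdot\na\psi^2$ is estimated by bounding $\cp_{x_0}$ in $L^{3/2}([0,t]\times B(x_0,3/2))$ term by term, exactly as in the derivation of \eqref{est.hp}, with the $V\otimes V$ and $V\otimes w$ contributions handled by the local $L^8_tL^4_x$ bound of Lemma \ref{th0803} for $V$ and H\"older with $w \in L^{8/3}_tL^4_x$. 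This again produces a contribution of the form $Ct^\alpha$ for some $\alpha>0$.

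The main obstacle is $I_4$, since $V$ has no spatial decay. Split $\na(w\psi^2) = (\na w)\psi^2 + w\,\na\psi^2$. For the first piece apply Young's inequality,
\[
\Big|\int V\cdot(v\cdot\na w)\psi^2\Big| \le \tfrac12\int |\na w|^2\psi^2 + C\int |V|^2|v|^2\psi^2,
\]
absorbing the first summand into the left-hand side. The critical quadratic $\int_0^t\int |V|^2|v|^2\psi^2$ is then estimated using $|v|^2\le 2|V|^2+2|w|^2$: for the $V^4$-part, H\"older in time yields $\|V\psi\|_{L^4_tL^4_x}^4 \le t^{1/2}\|V\psi\|_{L^8_tL^4_x}^4$, which is $\lec t^{1/2}$ by Lemma \ref{th0803}; for the $V^2 w^2$-part, spatial H\"older with $V\in L^\infty_tL^3_\uloc$ (via Lemma \ref{lemma23} from $u_0\in L^3_\uloc$) and $w\in L^2_tL^6_\uloc$ (via $\cE_T$) gives $\lec \|u_0\|_{L^3_\uloc}^2\int_0^t\|w\|_{L^6_\uloc}^2\,ds$, controlled by a positive power of $t$ by H\"older interpolation with the $L^{10/3}_{t,x}$ bound on $w$. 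The second piece $\int V\cdot w\,(v\cdot\na\psi^2)$ is handled by a triple H\"older $\|V\psi\|_{L^8_tL^4_x}\,\|w\psi\|_{L^{8/3}_tL^4_x}\,\|v\|_{L^2_t(0,t;L^2_\uloc)}$, whose last factor contributes an extra $t^{1/2}$.

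Combining all estimates yields, for $t<T_0$ sufficiently small depending only on $\|v_0\|_{L^2_\uloc}$,
\[
\int|w(t)|^2\psi^2 \le \int|w_0|^2\psi^2 + C\,t^{1/10},
\]
where $C$ depends on $\|w_0\|_{L^2_\uloc}$ and $\|u_0\|_{L^3_\uloc}$. Taking square roots, using $\|w(t)\chi_R\|_{L^2(B(x_0,1))} \le \|w(t)\psi\|_{L^2}$ (since $\phi_{x_0} \equiv 1$ on $B(x_0,1)$), and taking the supremum over $x_0\in\R^3$ yields \eqref{th0730b-2}. The principal technical hurdle is the $I_4$ estimate, where both the non-decay of $V$ and the roughness of $\na V$ at $t=0$ force us to exploit the precise local $L^8_tL^4_x$-integrability of $V$ from Lemma \ref{th0803} and to extract smallness in $t$ by H\"older in the time variable rather than through pointwise decay estimates on $\na V$.
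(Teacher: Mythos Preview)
Your overall strategy matches the paper's, but there is a genuine gap in your estimate of $I_4$, specifically in the $V^2w^2$ contribution. You claim that
\[
\int_0^t \|w(s)\|_{L^6_\uloc}^2\,ds
\]
is controlled by a positive power of $t$ via H\"older interpolation with the $L^{10/3}_{t,x}$ bound on $w$. This is false: the pair $(s,q)=(2,6)$ lies \emph{on} the critical line $\tfrac{2}{s}+\tfrac{3}{q}=\tfrac{3}{2}$ coming from $w\in\cE_T$, so there is no room to gain any factor of $t$. (Note also that $6>10/3$, so $L^6_x$ cannot be reached by interpolating $L^{10/3}_x$ against $L^2_x$.) Consequently your bound for $\int_0^t\!\int |V|^2|w|^2\psi^2$ is only $O(1)$, not $O(t^\alpha)$, and the argument as written does not close.

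The paper handles this differently. Instead of splitting $\nabla(w\psi^2)$ and using Cauchy--Schwarz on $V\cdot(v\cdot\nabla w)$, it first splits $v=V+w$ to isolate the dangerous term
\[
I_2' = \int_0^t\!\int \psi^2\, V\cdot(w\cdot\nabla)w\,dx\,ds,
\]
and then applies the interpolation $\|w\psi\|_{L^4}\le\|w\psi\|_{L^6}^{3/4}\|w\psi\|_{L^2}^{1/4}$ together with the three-term Young inequality $abc\le \e a^2+\e b^{8/3}+C(\e)c^8$. This produces (after absorbing $\e\|\nabla(w\psi)\|_{L^2}^2$ into the left side) a \emph{Gr\"onwall term}
\[
C\int_0^t \|V(s)\|_{L^4(B(x_0,3/2))}^8\,\|w(s)\psi\|_{L^2}^2\,ds,
\]
with coefficient $\|V\|_{L^4}^8$ that is integrable in time precisely by Lemma~\ref{th0803}. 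Gr\"onwall then yields $\|w(t)\psi\|_{L^2}^2\le C_0^2(\|w_0\chi_R\|_{L^2_\uloc}^2+t^{1/10})$. So the $L^8_tL^4_x$ bound on $V$ is used not to produce a direct power of $t$, but to make the Gr\"onwall coefficient integrable; this step is missing from your outline.

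A minor point: for the pressure term you invoke the new decomposition $\cp_{x_0}$ of Lemma~\ref{new.decomp}, but this is unnecessary here and introduces extra $V\otimes V$ terms involving $\nabla V$ near $t=0$. The paper simply uses the standard $\hp_{x_0}$ and the uniform bound \eqref{est.hp2}, $\|\hp_{x_0}\|_{L^{3/2}([0,t]\times B(x_0,3/2))}\lec\|v\|_{U^{3,3}_t}^2\lec A^2 t^{1/15}$, which immediately gives the $t^{1/10}$ contribution.
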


In this lemma, we do not assume the oscillation decay.

\begin{proof}
By Lemma \ref{lemma24} and similar to \eqref{uni.est.ep.li}, we can find $T_0 = T_0 (\norm{v_0}_{L^2_\uloc}^2)\in (0,1)$ such that, for $T_1=\min(T_0,T)$,
\EQN{
\norm{w}_{\cE_{T_1}} +\norm{V}_{\cE_{T_1}}
\lec \norm{w_0}_{L^2_\uloc}+\norm{u_0}_{L^2_\uloc}. 
}
By interpolation, it follows that for any $2\leq s\leq \infty$, and $2\leq q\leq 6$ satisfying $\frac 2s+\frac 3q = \frac 32$, we have
\EQN{\label{est.103}
\norm{w}_{U^{s,q}_{T_1}} + \norm{V}_{U^{s,q}_{T_1}} 
\lec \norm{w_0}_{L^2_\uloc}+\norm{u_0}_{L^2_\uloc}.
}
On the other hand, by Lemma \ref{th0803}, for any $t\in(0,1)$, %
\EQN{\label{V-strong-est}
\norm{V}_{U^{8,4}_t}\lec \norm{u_0}_{L^3_\uloc}.
}
Let $A = \norm{w_0}_{L^2_\uloc}+\norm{u_0}_{L^3_\uloc}$. Then, both inequalities can be combined for $t\le T_1$ as
\EQ{\label{bdd.A}
\norm{w}_{\cE_{t}} +\norm{V}_{\cE_{t}}
+\norm{w}_{U^{\frac{10}3,\frac{10}3}_{t}} + \norm{V}_{U^{\frac{10}3,\frac{10}3}_{t}}+	\norm{V}_{U^{8,4}_t}\lec A.
}

Fix $x_0\in\R^3$ and $R>0$, and let
\EQ{\label{xi.def}
\phi_{x_0}= \Phi(\cdot - x_0), \quad \xi = \phi_{x_0}^2 \chi_R^2.
}
Fix $\Theta\in C^\infty(\R)$, $\Theta' \ge 0$, $\Theta(t)=1$ for $t>2$, and $\Theta(t)=0$ for $t<1$.
Define  $\th_{\ep}\in C_c^\infty(0,T)$ for sufficiently small $\ep>0$ by
\EQ{\label{th_ep.def}
\th_\ep(s) = \Theta\left(\frac{s}{\ep}\right) - \Theta\left(\frac{s-T+3\ep}{\ep}\right).
}
Thus $\th_{\ep}(s)=1$ in $(2\ep,T-2\ep)$ and $\th_{\ep}(s)=0$ outside of $(\ep,T-\ep)$.
We now consider the local energy inequality \eqref{LEI.w} for $w$ with $\ph(x,s) = \xi(x)\th_\ep(s)$. We may replace $p$ by $\hp_{x_0}$ in \eqref{LEI.w} as supp\,$\xi \subset B(x_0,\frac32)$ and $\iint c_{x_0}(t) w\cdot \nb \xi\, dxdt = 0$.
We now take $\ep \to 0_+$. Since $\norm{v(t)-v_0}_{L^2(B_2(x_0))}\to 0$ and $\norm{V(t)-u_0}_{L^2(B_2(x_0))}\to 0$ as $t\to 0^+$, we get
\EQ{\label{app.w0}
\int_0^{2\ep}\int |w|^2 \xi (\th_\e)' \,dxds \to \int |w_0|^2 \xi dx.
}
The last term in \eqref{LEI.w} converges by Lebesgue dominated convergence theorem using
\EQN{
\int_{0}^t\!\int  |V\cdot (v \cdot \na )(w\xi)|\, dxds 
&\lec 
\norm{V}_{L^8(0,T;L^4(B(x_0,\frac32)))}\norm{v}_{U^{8/3,4}_T}(\norm{\nb w}_{U^{2,2}_T}+\norm{w}_{U^{2,2}_T}),
}
where the right hand side of the inequality is bounded independently of $\ep$.

In the limit $\ep \to 0_+$, for any $t\in (0,T)$, we get
\EQ{ \label{LEI.w.0}
\int & |w|^2(x,t) \xi(x)\, dx
+ 2\int_{0}^t\!\int |\na w|^2 \xi\, dxds \\ 
& \leq \int |w_0|^2 \xi\, dx +\int_{0}^t\!\int |w|^2 (\De \xi +v\cdot \na\xi )\,dxds\\
&\quad + \int_{0}^t\!\int 2 \hp_{x_0} w \cdot \na \xi\, dxds
+ \int_{0}^t\!\int  2V\cdot (v \cdot \na )(w\xi)\, dxds,
}
for $\xi$ given by \eqref{xi.def}. Now, we consider $t\leq T_1$.
Using \eqref{bdd.A}, we have
\[
\int_{0}^t\!\int |w|^2 \De \xi dxds
\lec \norm{w}_{U^{2,2}_t}^2 \lec A^2 t, 
\]
\EQN{
\int_{0}^t\! \int |w|^2(v\cdot \na)\xi dxds
&\lec \norm{v}_{U^{3,3}_t}\norm{w}_{U^{3,3}_t}^2 \lec A^3 t^{\frac 1{10}}.
}
For the convenience, we suppress the indexes $x_0$ and $R$ in $\phi_{x_0}$, $\hp_{x_0}$ and $\chi_R$. 
By additionally using \eqref{est.hp2}, 
\EQN{
\int_{0}^t\! \int \hp w \cdot \na \xi dxds 
&\lec \int_{0}^t\! \int_{B(x_0,\frac 32)} |\hp||w|   dxds
\lec \norm{\hp }_{L^{\frac 32}([0,t]\times B(x_0,\frac 32))}
\norm{w}_{U^{3,3}_t}\\
&\lec \norm{v}_{U^{3,3}_t}^2 \norm{w}_{U^{3,3}_t} \lec A^3 t^{\frac 1{10}}.
}

To estimate the last term in \eqref{LEI.w.0}, %
we decompose it as
\EQN{
&\int_{0}^t\!\int  V\cdot (v \cdot \na )(w\xi)\, dxds
= I_1+I_2+ I_3
\\
&= \int_{0}^t\! \int \xi V\cdot (V \cdot \na )w\, dxds
+\int_{0}^t\! \int \xi V\cdot (w \cdot \na )w\, dxds
+\int_{0}^t\! \int  V\cdot w (v \cdot \na )\xi\,  dxds.
}

We have
\EQN{
|I_1|\lec
\norm{V}_{L^4(0,T;L^4(\supp(\xi)))} ^2 
\norm{\na w}_{U^{2,2}_t}
\lec A^3 t^\frac14 .
}

On the other hand, by Poincar\'{e} inequality, we have
\EQN{
\int_0^t \norm{w\phi\chi}_{L^6}^2 ds
&\lec \int_0^t \norm{\na (w\phi\chi)}_{L^2}^2 ds 
+ \int_0^t \norm{w\phi\chi}_{L^2}^2 ds\\
&\lec \int_0^t \norm{|\na w|\phi\chi}_{L^2}^2 ds
+ \norm{w}_{U^{2,2}_t}^2,
}
which follows that (using Young's inequality $abc \le \e a^2 + \e b^{8/3} + C(\e) c^8$)
\EQN{
|I_2|
\leq& \int_0^t \norm{|\na w|\phi\chi}_{L^2} \norm{w\phi\chi}_{L^4} \norm{V}_{L^4(\supp(\xi))} ds\\
\le& \int_0^t \norm{|\na w|\phi\chi}_{L^2}\norm{w\phi\chi}_{L^6}^\frac34\norm{w\phi\chi}_{L^2}^\frac14  \norm{V}_{L^4(\supp(\xi))} ds\\
\leq& \
\e \int_0^t \bke{\norm{|\na w|\phi\chi}_{L^2}^2 + \norm{w\phi\chi}_{L^6}^2 }ds\\
&+ C(\e) \int_0^t \norm{ V}_{L^4(\supp(\xi))}^8\norm{w\phi\chi}_{L^2}^2ds\\
\le& \ \frac 1{100}\int_0^t \norm{|\na w|\phi\chi}_{L^2}^2 ds + A^2 t 
+ C\int_0^t \norm{V}_{L^4(\supp(\xi))}^8\norm{w\phi\chi}_{L^2}^2ds
}
by choosing suitable $\e$.
It is easy to control $I_3$:
\[
|I_3| \lec t^{\frac 1{10}}\norm{V}_{U^{\frac {10}3,\frac{10}3}_t}\norm{v}_{U^{\frac {10}3,\frac{10}3}_t}\norm{w}_{U^{\frac {10}3,\frac{10}3}_t}\lec{A^3} t^{\frac 1{10}}.
\]

Therefore, we obtain
\EQN{
\abs{\int_{0}^t\!\int  V\cdot (v \cdot \na )(w\xi)\, dxds}
\leq& \norm{|\na w| \phi\chi }_{L^2([0,t]\times \R^3)}^2\\ &+C(1+A^3)\bke{t^\frac 1{10}
+ \int_0^t \norm{V}_{L^4(\supp(\xi))}^8\norm{w\phi\chi}_{L^2}^2ds},
}
for some absolute constant $C$. Finally, we combine all the estimates to get from \eqref{LEI.w.0} that
\EQN{
&\norm{w(t)\phi \chi}_{L^2(\R^3)}^2
+\norm{|\na w| \phi\chi }_{L^2([0,t]\times \R^3)}^2
\\
&\qquad \lec \norm{w_0\chi_R}_{L^2_\uloc}^2+  
(1+A^3)\bke{t^\frac 1{10}
	+ \int_0^t \norm{V}_{L^4(\supp(\xi))}^8\norm{w\phi\chi}_{L^2}^2ds}
}

Note that $\norm{w(t)\phi \chi}_{L^2(\R^3)}^2$ is lower semicontinuous in $t$ as $w\phi$ is weakly $L^2$-continuous in $t$.
By Gr\"{o}nwall's inequality and \eqref{bdd.A}, we have
\EQN{
\norm{w(t)\phi \chi}_{L^2(\R^3)}^2 
\leq C_0^2(\norm{w_0\chi_R}_{L^2_\uloc}^2+ t^\frac1{10}),
}
for some $C_0=C_0(A)>0$. Taking supremum in $x_0$, we get
\[
\norm{w(t)\chi_R}_{L^2_\uloc} \leq C_0 (t^{\frac1{20}} +\norm{w_0\chi_R}_{L^2_\uloc}) .
\]
This finishes the proof of Lemma \ref{th0730b}.
\end{proof}

\begin{lemma}[Strong local energy inequality]\label{th:SLEI}

Let $(v,p)$ be a local energy solution in $\R^3\times (0,T)$ to Navier-Stokes equations \eqref{NS} for the initial data $v_0\in L^2_\uloc$ constructed in Theorem \ref{loc.ex}, as the limit of approximation solutions $(v^{(k)},p^{(k)})$
 of \eqref{reg.NS}. Then there is a subset $\Si \subset (0,T)$ of zero Lebesgue measure such that, for any $t_0 \in (0,T) \setminus \Si$ and any $t \in (t_0,T)$, we have
\EQ{\label{SLEI-v}
\int & |v|^2 \ph(x,t)\, dx
+ 2\int_{t_0}^t\!\int |\na v|^2 \ph\, dxds \\ 
& \leq \int |v|^2 \ph (x,t_0)\,  dx
+\int_{t_0}^t\!\int \bket{ |v|^2 (\pd_s \ph +\De \ph) +(|v|^2+2 p)v\cdot \na\ph } dxds,
}
for any $\ph \in C^\infty_c(\R^3 \times [t_0,T))$. If, furthermore, for some $u_0\in L^2_{\uloc,\si}$, $V(t) = e^{t\De}u_0$ and $w=v-V$, then for any $t_0 \in (0,T) \setminus \Si$ and any $t \in (t_0,T)$, we have
\EQ{\label{pre.decay}
\int & |w|^2 \ph(x,t)\, dx
+ 2\int_{t_0}^t\!\int |\na w|^2 \ph\, dxds \\ 
& \leq \int |w|^2 \ph (x,t_0)\, dx
+\int_{t_0}^t\!\int |w|^2 (\pd_s\ph+\De \ph +v\cdot \na\ph )dxds\\
&\quad + \int_{t_0}^t\!\int 2 p w \cdot \na \ph\, dxds
- \int_{t_0}^t\!\int  2(v \cdot \na )V\cdot w\ph\, dxds,
}
for any $\ph \in C^\infty_c(\R^3 \times [t_0,T))$.
\end{lemma}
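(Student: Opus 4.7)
The plan is to start from an energy \emph{equality} satisfied by the smooth approximations $(v^{(k)}, p^{(k)})$ used to construct $(v,p)$ in the proof of Theorem \ref{loc.ex}, and to pass to the limit on time intervals $[t_0,t]$ for $t_0$ chosen outside a Lebesgue-null set. As noted at the start of the proof of Lemma \ref{uni.est.ep}, each $v^{(k)}$ is smooth on $\R^3 \times (\delta, T)$ for every $\delta > 0$ thanks to the mollification, so testing \eqref{reg.NS} against $2 v^{(k)} \ph$ with $\ph \in C_c^\infty(\R^3 \times [t_0,T))$ produces the clean identity
\EQN{
&\int |v^{(k)}|^2 \ph(\cdot,t)\, dx + 2\int_{t_0}^t\!\!\int |\nabla v^{(k)}|^2 \ph\, dxds
= \int |v^{(k)}|^2 \ph(\cdot, t_0)\, dx
\\
&\quad + \int_{t_0}^t\!\!\int |v^{(k)}|^2 (\pd_s \ph + \De \ph)\, dxds + (\text{cubic and pressure terms from }\eqref{reg.NS}),
}
with the extra $\Phi_{(k)}$-localization terms identical to those appearing in \eqref{LEI.vep00}.

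The key new input is the choice of $t_0$ making the boundary term $\int |v^{(k)}|^2 \ph(\cdot,t_0)\,dx$ pass to the limit. By the strong convergence \eqref{conv.strong} of $v^{(k)} \to v$ in $L^3((0,T); L^3(B_{2^n}))$, combined with Fubini and a diagonal extraction over $n \in \NN$, I obtain a subsequence (still denoted $(v^{(k)})$) and a null set $\Sigma \subset (0,T)$ such that for every $t_0 \notin \Sigma$ and every $n$, $v^{(k)}(\cdot, t_0) \to v(\cdot, t_0)$ in $L^3(B_{2^n})$, and hence in $L^2(B_{2^n})$ by interpolation against the uniform $L^\infty_t L^2_\uloc$ bound. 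The a.e.-in-$t_0$ limit coincides with the weakly $L^2_\loc$-continuous representative of $v$ used in \eqref{weak.conti} because distributional limits are unique. For such $t_0$ and non-negative $\ph$, the boundary term passes with equality; the bulk linear term passes by weak-$*$ convergence \eqref{conv.weak.star}; the cubic and pressure terms pass by \eqref{conv.strong}--\eqref{conv.moli} and \eqref{conv.p.st}; the extra $\Phi_{(k)}$-pieces vanish since $\Phi_{(k)}\to 1$ uniformly on compact sets; and the two left-hand-side quadratic terms pass as $\le$ by lower semicontinuity under weak convergence. This yields \eqref{SLEI-v}.

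The second inequality \eqref{pre.decay} is then obtained by the algebraic manipulation already used in Lemma \ref{th:LEI.w.0}: to \eqref{SLEI-v} I add the weak form of the heat equation for $V$ tested against $2v\ph$ and the weak form of the perturbed equation \eqref{PNS} tested against $2V\ph$. Both test functions are admissible because for any $t_0 > 0$, Lemma \ref{lemma23} gives $V, \nabla V \in L^\infty_\loc([t_0, T); L^\infty(\R^3))$, while $\ph$ is compactly supported in $\R^3 \times [t_0,T)$. After integration by parts, the cross-terms rearrange exactly as in Lemma \ref{th:LEI.w.0} and convert the $|v|^2$ inequality into the $|w|^2$ inequality \eqref{pre.decay}. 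Since this step introduces no additional exceptional times, the same $\Sigma$ works.

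The main obstacle is the slicewise strong $L^2_\loc$-convergence exploited in the second paragraph: one only has $L^3_t L^3_\loc$-convergence of the full space-time integral, but needs $L^2_\loc$-convergence at a generic time slice, and must moreover identify that a.e.\ limit with the weakly continuous representative of $v$ implicit in Definition \ref{les}(iii). Once this technicality is resolved, everything else is a direct reuse of the convergences already established for $(v^{(k)}, p^{(k)})$ in the proof of Theorem \ref{loc.ex}.
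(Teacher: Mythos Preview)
Your proposal is correct and follows essentially the same route as the paper: extract a null set $\Sigma$ of times at which the approximations $v^{(k)}(\cdot,t_0)$ converge strongly to $v(\cdot,t_0)$ in $L^2_{\loc}$, use this to pass the boundary term at $t_0$ in the approximate energy equality \eqref{LEI.vep00} (shifted to start at $t_0$), handle the bulk terms via the convergences \eqref{conv.weak.star}--\eqref{conv.moli} and \eqref{conv.p.st} already established in the proof of Theorem~\ref{loc.ex}, and then deduce \eqref{pre.decay} from \eqref{SLEI-v} by the algebra of Lemma~\ref{th:LEI.w.0}. The paper gets the slicewise $L^2(B_n)$-convergence directly from $L^2((0,T);L^2(B_n))$-convergence (a consequence of \eqref{conv.strong} on bounded domains), while you route through $L^3$; this is cosmetic. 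Your remark about identifying the a.e.\ limit with the weakly continuous representative is a point the paper leaves implicit.

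One small gap: your handling of the upper boundary term $\int |v^{(k)}|^2\ph(\cdot,t)\,dx$ by ``lower semicontinuity under weak convergence'' is not justified as written, since weak-$*$ convergence in $L^\infty_t L^2_{\loc}$ does not yield weak $L^2_{\loc}$-convergence of the time slice at an arbitrary fixed $t$. The paper avoids this by first passing to the limit on the full interval $[t_0,T)$ (where $\ph$ vanishes near $T$, so there is no upper boundary term), and only afterwards reinserting the term at $t$ via the cutoff $\th_\ep$ and the weak continuity \eqref{weak.conti} of the limit $v$, exactly as in the derivation of \eqref{pre.lei}. Alternatively, you could first take $t\notin\Sigma$ (where the upper boundary term passes with equality by strong convergence) and then extend to all $t\in(t_0,T)$ using \eqref{weak.conti}.
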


This lemma is not for general local energy solutions, but only for those constructed by the approximation \eqref{reg.NS}. 
Also note that \eqref{SLEI-v} is true for $t_0=0$ since it becomes \eqref{LEI}, but 
\eqref{pre.decay} is unclear for $t_0=0$ since the last integral in \eqref{pre.decay} may not be defined without further assumptions; Compare it with \eqref{LEI.w.0}.

\begin{proof} For any $n \in \NN$,
the approximation $v^{(k)}$ satisfy
\[
\lim_{k \to \infty} \norm{v^{(k)} -v}_{L^2(0,T; L^2(B_n))} =0.
\]
Thus there is a set $\Si_n \subset (0,T)$ of zero Lebesgue measure such that 
\[
\lim_{k \to \infty} \norm{v^{(k)}(t) -v(t)}_{L^2(B_n)} =0, \quad \forall t \in [0,T)\setminus \Si_n.
\]

Let
\[
\Si = \cup _{n=1}^\infty \Si_n, \quad |\Si|=0.
\]
We get
\EQ{\label{eq4.14}
\lim_{k \to \infty} \norm{v^{(k)}(t) -v(t)}_{L^2(B_n)} = 0, \quad \forall t \in [0,T)\setminus \Si, \ \forall n \in \NN.
}
The local energy equality of $(v^{(k)},p^{(k)})$ in $[t_0,T]$ is derived similarly to \eqref{LEI.vep00}
\EQ{\label{SLEI-vk}
2\int_{t_0}^T\! \int |\na v^{(k)}|^2\psi dxds
=& \int  |v^{(k)}|^2 \psi (x,t_0)\,  dx +\int_{t_0}^T\!\int |v^{(k)}|^2(\pa_s \psi +\De \psi) dxds \\
&+\int_{t_0}^T\!\int |v^{(k)}|^2\Phi_{(k)} ({\cal J}_{(k)}(v^{(k)}) \cdot \na)\psi dxds\\
&+ \int_{t_0}^T\! \int 2 p^{(k)} v^{(k)} \cdot \na \psi \, dxds\\
&-\int_{t_0}^T\!\int |v^{(k)}|^2\psi ({\cal J}_{(k)}(v^{(k)})\cdot \na)\Phi_{(k)}dxds,
}
for any $\psi \in C^\infty_c(\R^3 \times [0,T))$. By \eqref{eq4.14}, 
we have
\[
\lim_{k \to \infty} \int |v^{(k)}|^2 \psi(x,t_0)\, dx = \int |v|^2 \psi (x,t_0)\, dx 
\]
for $t_0 \in [0,T)\setminus \Si$. Taking limit infimum $k \to \infty$ in \eqref{SLEI-vk}, we get 
\EQN{%
&2\int_{t_0}^T\!\int |\na v|^2 \psi\, dxds  
\\
& \leq \int |v|^2 \psi (x,t_0)\,  dx
+\int_{t_0}^T\!\int \bket{ |v|^2 (\pd_s \psi +\De \psi) +(|v|^2+2 p)v\cdot \na\psi } dxds.
}
By the same argument for \eqref{pre.lei}, we get \eqref{SLEI-v} from the above. 

Finally, inequality \eqref{pre.decay}  for $t_0>0$ is equivalent to \eqref{SLEI-v} by the same argument of Lemma \ref{th:LEI.w.0}. We have integrated by parts the last term in \eqref{pre.decay}, which is valid since $\nb V \in L^\infty(\R^3 \times (t_0,t))$.
\end{proof}

\medskip

We now prove the main result of this section.

\begin{proposition}[Decay of $w$ and $\cp$]\label{dot.E.3} 
Let $v_0 = w_0+u_0$ with $ w_0\in E^2_\si$ and $u_0\in L^3_{\uloc,\si}$,
and
\EQN{
\lim_{|x_0|\to \infty}\int_{B(x_0,1)}| v_0- (v_0)_{B(x_0,1)}| dx =0.
}
Let $(v,p)$ be a local energy solution in $\R^3\times (0,T)$ to Navier-Stokes equations \eqref{NS} for the initial data $v_0\in L^2_\uloc$ constructed in Theorem \ref{loc.ex}, as the limit of approximation solutions $(v^{(k)},p^{(k)})$ of \eqref{reg.NS}. 
Let $w= v-V$ for $V(t) =e^{t\De}u_0$.
Then, $w$ and $\cp_{x_0}$, defined in Lemma \ref{new.decomp}, decay at spatial infinity: For any $t_1\in (0,T)$,
\EQ{\label{dot.E.3-decay} 
\lim_{|x_0|\to \infty}\bke{
\norm{w}_{L^\infty_t L^2_x \cap L^3(Q_{x_0})} + \norm{\nb w}_{L^2(Q_{x_0})}
+ \norm{\cp_{x_0}}_{L^{\frac 32}(Q_{x_0})} 
}=0,
}
where $Q_{x_0} = B(x_0,\frac 32)\times(t_1,T) $.

\end{proposition}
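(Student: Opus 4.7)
The plan is to run a localized energy estimate for $w$ on $[t_0,T)$ using the strong local energy inequality \eqref{pre.decay}, paired with a direct decay analysis of the new local pressure $\cp_{x_0}$ from Lemma \ref{new.decomp}. Fix $t_1 \in (0,T)$ and pick $t_0 \in (0,t_1)\setminus\Si$ small (to be chosen below). For $|x_0|$ large, let $\phi_{x_0}(x) = \Phi(x-x_0)$, supported in $B(x_0,3/2)$, and apply \eqref{pre.decay} to $w$ on $[t_0,t]$, $t \in (t_1,T)$, with spatial test function $\phi_{x_0}^2$ (cut off appropriately in time near $T$).

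By Lemma \ref{new.decomp}, $p = \cp_{x_0} + q_{x_0}(t)$ on the support of $\phi_{x_0}$; the spatially constant part $q_{x_0}(t)$ drops out after integration against $w\cdot\na\phi_{x_0}^2$ because $\na\cdot w = 0$. What remains is $\int 2\cp_{x_0}\,w\cdot \na\phi_{x_0}^2\,dxds$, bounded by H\"older by $C\|\cp_{x_0}\|_{L^{3/2}(Q_{x_0})}\|w\|_{L^3(Q_{x_0})}$. The drift term $\int (v\cdot\na)V\cdot w\,\phi_{x_0}^2\,dxds$ is controlled by $\|\na V\|_{L^\infty(B(x_0,3/2)\times(t_0,T))} \to 0$ from Lemma \ref{decay.na.U}. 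The initial term $\int|w(t_0)|^2\phi_{x_0}^2\,dx$ is made arbitrarily small by Lemma \ref{th0730b}: choose $R \le |x_0| - 2$, so $\phi_{x_0}^2 \le \chi_R^2$ on its support, obtaining the bound $C_0^2(t_0^{1/10} + \|w_0\chi_R\|_{L^2_\uloc}^2)$, which is $o(1)$ after taking $t_0$ small and then $|x_0|$ large (using $w_0\in E^2$).

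It remains to prove decay of $\cp_{x_0}$ in $L^{3/2}(Q_{x_0})$. The three terms of \eqref{cp} involving at least one factor of $w$ (lines 1--3) are handled by Calder\'on--Zygmund bounds and tail estimates analogous to \eqref{est.hp2}, and decay with $\|w\|_{L^3}$ in a neighborhood plus $\|V\|_{L^3}$ there. The term $\int K_i[(V\cdot\na)V_i]\rho_2\,dy$ (line 4) decays because $V\in L^\infty$ and $\na V \to 0$ uniformly on $B(x_0,3)$ for $t \ge t_0$. For the two $V_iV_j$ far-field terms (lines 5--6) that carry no explicit $\na V$ factor, the key is to integrate by parts in $y$ using $\na\cdot V=0$ and the compact support of $\rho_2$, combining them into
\[
\pv\!\int [K_{ij}(x-y) - K_{ij}(x_0-y)]V_iV_j(y)\,dy \;-\; \int [K_i(x-y)-K_i(x_0-y)](V\cdot\na V)_i\rho_2\,dy.
\]
The second piece decays by the $\na V$-decay. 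For the first piece, split $y$ into $B(x_0,M)$ and its complement: the outer integral is $O(M^{-1}\|V\|_\infty^2)$ thanks to $|K_{ij}(x-y)-K_{ij}(x_0-y)|\lec|x-x_0|/|x_0-y|^4$, while on the inner region we subtract the constant $V_iV_j(x_0)$ (whose contribution vanishes by translation invariance of Lebesgue measure in the pv limit), leaving $(V_iV_j(y) - V_iV_j(x_0))$ controlled by $|y-x_0|\|\na V\|_{L^\infty(B(x_0,M))}\to 0$ as $|x_0|\to\infty$.

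Combining the two steps, the localized energy inequality takes the form
\[
\int|w(t)|^2\phi_{x_0}^2\,dx + 2\int_{t_0}^t\!\int|\na w|^2\phi_{x_0}^2\,dxds \le o_{|x_0|\to\infty}(1) + C\|w\|_{L^3(Q_{x_0})}^2,
\]
and an interpolation $\|w\|_{L^3}\lec \|w\|_{L^\infty L^2}^{1/2}\|\na w\|_{L^2 L^2}^{1/2}+\|w\|_{L^2 L^2}$ with absorption into the left-hand side yields decay of $\|w\|_{L^\infty_t L^2_x\cap L^3(Q_{x_0})}$ and $\|\na w\|_{L^2(Q_{x_0})}$; the decay of $\cp_{x_0}$ then follows from the analysis above applied with this bound on $w$. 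The main obstacle is precisely the two $V\otimes V$ far-field terms in \eqref{cp}: $V$ does not decay on its own, and extracting a usable $\na V$ requires the integration-by-parts step combined with kernel-level cancellation against the constant $V_iV_j(x_0)$; this is exactly the point where the oscillation-decay hypothesis \eqref{ini.decay} enters, through Lemma \ref{decay.na.U}, and it is the reason the new pressure decomposition \eqref{cp} had to be introduced in place of the original $\hp_{x_0}$.
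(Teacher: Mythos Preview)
Your overall architecture is right, and your treatment of the $V\otimes V$ far-field terms (lines 5--6 of \eqref{cp}) via integration by parts and subtraction of the constant $V_iV_j(x_0)$ is a legitimate alternative to the paper's extra scale $\tau$ and the splitting $p^{G,2}+p^{G,3}$. But the argument has two genuine gaps that prevent it from closing.

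\textbf{The far-field $F$-pressure is not local in $w$.} You say lines 1--3 of $\cp_{x_0}$ ``decay with $\|w\|_{L^3}$ in a neighborhood''. This is false for the third line
\[
p^{F,3}(x,t)=\int_{B(x_0,2)^c}\bigl(K_{ij}(x-y)-K_{ij}(x_0-y)\bigr)F_{ij}(y,t)\,dy,
\qquad F=w\otimes w+V\otimes w+w\otimes V,
\]
which samples $w$ over \emph{all} of $B(x_0,2)^c$. A dyadic sum only gives $\|p^{F,3}\|_{L^{3/2}}\lesssim\|F\|_{U^{3/2,1}}\lesssim A^2$, which is bounded but does not decay. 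To see $p^{F,3}\to 0$ as $|x_0|\to\infty$ you would already need $\sup_{|y|>R}\|w\|_{L^2(B_1(y))}\to 0$, which is exactly the conclusion you are trying to prove. This circularity is the reason the paper does \emph{not} work per-$x_0$: it multiplies everything by the outer cutoff $\chi_R$, proves $\|\cp_{x_0}\chi_R\|_{L^{3/2}}\lesssim \|w\chi_R\|_{U^{3,3}}+o_R(1)$ uniformly in $x_0$ (splitting the $p^{F,3}$ integral at scale $\sqrt R$), and feeds the $\|w\chi_R\|_{U^{3,3}}$ back into a Gr\"onwall loop.

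\textbf{The absorption step does not close.} Even granting the estimate you wrote,
\[
\int|w(t)|^2\phi_{x_0}^2\,dx + 2\int_{t_0}^t\!\!\int|\na w|^2\phi_{x_0}^2\,dxds \le o(1) + C\|w\|_{L^3(Q_{x_0})}^2,
\]
your interpolation $\|w\|_{L^3}\lesssim\|w\|_{L^\infty L^2}^{1/2}\|\na w\|_{L^2}^{1/2}+\|w\|_{L^2L^2}$ followed by Young produces $\epsilon\|\na w\|_{L^2}^2 + C_\epsilon\|w\|_{L^\infty_tL^2(B(x_0,3/2))}^2$ on the right. The constant $C_\epsilon$ is large, the norm on the right is over the \emph{larger} ball $B(x_0,3/2)$ while the left side carries the weight $\phi_{x_0}^2$, and there is no smallness available; the inequality cannot be absorbed per-$x_0$. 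The paper avoids this by interpolating instead through $\|w\chi_R\|_{U^{6,2}}$ (see \eqref{U33.est}), taking the supremum over $x_0$ so that both sides live in $L^2_\uloc$, and then running a Gr\"onwall inequality in time for $t\mapsto\|w(t)\chi_R\|_{L^2_\uloc}^2$: cubing and integrating turns $\|w\chi_R\|_{L^6(t_0,t;L^2_\uloc)}^2$ into a genuine Gr\"onwall right-hand side. The decay of $\cp_{x_0}$ is then read off \emph{a posteriori} from \eqref{est.decay.p} once $\|w\chi_R\|_{U^{3,3}}\to 0$ is established.

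In short: replace $\phi_{x_0}^2$ by $\phi_{x_0}^2\chi_R^2$, take $\sup_{x_0}$ \emph{before} trying to close, and trade the $L^\infty_tL^2$ factor in your interpolation for $L^6_tL^2_\uloc$ so that Gr\"onwall applies.
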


Note that we do not assert uniform decay up to $t_1=0$. We assume the approximation \eqref{reg.NS} only to ensure the conclusion of Lemma \ref{th:SLEI}, the strong local energy inequality.

\begin{proof}
Choose $A=A(\norm{w_0}_{L^2_\uloc},\norm{u_0}_{L^2_\uloc},T)$ such that
\EQN{
\norm{w}_{\cE_T} +\norm{V}_{\cE_T}
+
\norm{w}_{U^{s,q}_T} + \norm{V}_{U^{s,q}_T} 
&\lec A,
}
for any $2\leq s\leq \infty$, and $2\leq q\leq 6$ satisfying $\frac 2s+\frac 3q = \frac 32$.

Fix $x_0\in \ZZ^3$ and $R \in \NN$. Let $\phi_{x_0}=\Phi(\cdot-x_0)$, $\chi_R(x)=1-\Phi\left(\frac {x}R\right)$, and 
\EQ{\label{xi.def}
\xi = \phi_{x_0}^2 \chi_R^2.
}
For the convenience, we suppress the indexes $x_0$ and $R$ in $\phi_{x_0}$, $\cp_{x_0}$ and $\chi_R$. 

Let $\Si$ be the subset of $(0,T)$ defined in Lemma \ref{th:SLEI}. For any $ t_0\in (0,t_1)\setminus \Si$ and $t\in (t_0,T)$, choose $\th(t) \in C^\infty_c(0,T)$ with $\th=1$ on $[t_0,t]$. Let $\ph(x,t) = \xi(x) \th(t)$. By \eqref{pre.decay} of  Lemma \ref{th:SLEI}, using $t_0\not \in \Si$, we have
\EQ{\label{eq4.19}
\int |w(x,t)|^2 \xi (x)\, dx
+&2\int_{t_0}^t\!\int |\na w|^2 \xi\, dxds \\ 
\leq &\int |w(x,t_0)|^2 \xi (x) \,dx
+\int_{t_0}^t\!\int |w|^2 (\De \xi +(v\cdot \na)\xi )\,dxds\\
&+ 2\int_{t_0}^t\!\int \cp_{x_0} w \cdot \na \xi \,dxds
-2\int_{t_0}^t\!\int  (v \cdot \na )V\cdot w\xi \,dxds.
}
Above we have replaced $p$ by $\cp_{x_0}$ using $\iint q_{x_0}(t) w \cdot \na \xi \,dxds=0$.

By the choice of $\xi$, we can easily see that
\[
\int |w(\cdot,t)|^2 \xi  dx
+2\int_{t_0}^t\!\int |\na w|^2 \xi dxds
\geq
 \norm{w(\cdot,t)\chi}_{L^2(B(x_0,1))}^2
+2\norm{|\na w| \chi }_{L^2([t_0,t]\times B(x_0,1) )}^2,
\]
\[
\int |w(\cdot,t_0)|^2 \xi dx\lec \norm{w(\cdot,t_0)\chi}_{L^2_\uloc}^2,
\]
\[
\int_{t_0}^t\!\int |w|^2 \De \xi dxds
\lec \norm{w\chi}_{U^{2,2}(t_0,t)}^2 
+ \frac 1R \norm{w}_{U^{2,2}_T}^2,
\]
and
\EQN{
 \int_{t_0}^t\!\int |w|^2(v\cdot \na)\xi dxds
&\lec \norm{v}_{U^{3,3}_T}\norm{w\chi}_{U^{3,3}(t_0,t)}^2 + \frac 1R\norm{v}_{U^{3,3}_T}\norm{w }_{U^{3,3}_T}^2\\
&\lec_A \norm{w\chi}_{U^{3,3}(t_0,t)}^2 + \frac 1R.
}

The last term can be also estimated by
\EQN{
\left|\int_{t_0}^t\!\int (v \cdot \na )V\cdot w\xi dxds\right|
&\lec \norm{|\na V|\chi}_{U^{\infty,3}(t_0,T)}
\norm{v}_{U^{2,6}_T}
\norm{w\chi}_{U^{2,2}(t_0,t)}\\
&\lec_A \norm{w\chi}_{U^{2,2}(t_0,t)}^2  
+ \norm{|\na V|\chi}_{U^{\infty,3}(t_0,T)}^2.
}

The only remaining term is the one with pressure. Note
\EQN{
\int_{t_0}^t\!\int &\cp w \cdot \na \xi dxds 
\lec \int_{t_0}^t\!\int_{B(x_0,\frac 32)} |\cp||w|  \chi^2 dxds
+\frac 1R \int_{t_0}^t\!\int_{B(x_0,\frac 32)} |\cp||w|\chi  dxds   \\
&\lec \norm{\cp \chi}_{L^{\frac 32}([t_0,t]\times B(x_0,\frac 32))}
\norm{w\chi}_{U^{3,3}(t_0,t)}
+ \frac 1R \norm{\cp}_{L^{\frac 32}([0,T]\times B(x_0,\frac 32))}
\norm{w\chi}_{U^{3,3}_T}.
}
For the second term, we can use a bound uniform in $x_0$ 
\EQN{
\norm{\cp_{x_0}}_{L^\frac32([0,t]\times B(x_0,\frac 32))} \le C \norm{v}_{U^{3,3}_t}^2 + C(T) \norm{V}_{U^{\infty,2}_T}^2, %
}
which follows from \eqref{est.hp2}, \eqref{hp.cp} and \eqref{q.est}. For the first term, although the other factor $\norm{w\chi}_{U^{3,3}(t_0,t)}$ also has decay, it is larger than the left side of \eqref{eq4.19} by itself.
Hence we need to estimate $\norm{\cp\chi}_{L^{\frac 32}([t_0,t]\times B(x_0,\frac 32))}$ and show its decay.

Let $F_{ij}=w_iw_j + w_i V_j + w_jV_i$ and $G_{ij}=V_iV_j$. The local pressure $\cp$ defined in Lemma \ref{new.decomp} can be further decomposed as 
\[
\cp(x,t)
=p^F + p^{G,1}+ p^{G,2} + p^{G,3}
\]
where $p^F=p^F_{x_0}$ is defined as in \eqref{pFx0},
\[
p^{G,1}
=\int K_i(\cdot-y) [\pa_jG_{ij}] \rho_2 (y) dy, 
\]
\EQN{
p^{G,2}
=& \int (K_{ij}(\cdot-y)-K_{ij}(x_0-y)) G_{ij}(\rho_\tau-\rho_2)(y)dy\\
&- \int (K_{i}(\cdot-y)-K_{i}(x_0-y)) G_{ij}\pa_j(\rho_\tau- \rho_2)(y)dy,
}
for $\rho_\tau = \Phi\left(\frac{\cdot -x_0}{\tau}\right)$, $\tau>4$, and 
\EQN{
p^{G,3} =& \int (K_{ij}(\cdot-y)-K_{ij}(x_0-y))G_{ij}(1-\rho_\tau)(y)dy \\
&+ \int (K_{i}(\cdot-y)-K_{i}(x_0-y)) G_{ij}\pa_j\rho_\tau(y)dy.
}

Recall $p^F=p^F_{x_0}$ 
\begin{align*}
p^F &= -\frac 13 \tr F 
+ \pv \int_{B(x_0,2)}  K_{ij}(\cdot-y) F_{ij}(y) dy
\\
&\quad + \int_{B(x_0,2)^c}  (K_{ij}(\cdot-y)-K_{ij}(x_0-y)) F_{ij}(y) dy\\
&= p^{F,1} +p^{F,2} + p^{F,3}.
\end{align*}
We estimate $p^{F,i}\chi$, $i=1,2,3$. Obviously, we have
\EQN{
\norm{p^{F,1}\chi}_{L^{\frac 32}([t_0,t]\times B(x_0,\frac 32))}
\lec \norm{F\chi}_{U^{\frac 32,\frac 32}(t_0,t)}.
}
Using $L^p$-norm preservation of Riesz transfroms and $\norm{\na \chi}_\infty\lec \frac 1R$,
\EQN{
\norm{p^{F,2}\chi}_{L^{\frac 32}([t_0,t]\times B(x_0,\frac 32))}
\le& \norm{\pv \int_{B(x_0,2)}  K_{ij}(\cdot-y) F_{ij}(y)\chi(y) dy }_{L^{\frac 32}([t_0,t]\times B(x_0,\frac 32))}\\
&+ \norm{\pv \int_{B(x_0,2)}  K_{ij}(\cdot-y) F_{ij}(y)(\chi(\cdot)-\chi(y)) dy}_{L^{\frac 32}([t_0,t]\times B(x_0,\frac 32))}\\
\lec& \norm{F\chi}_{U^{\frac 32,\frac 32}(t_0,t)} 
+ \frac 1R \norm{ \int_{B(x_0,2)}  \frac 1{|\cdot-y|^2} |F_{ij}(y)| dy}_{L^{\frac 32}(t_0,t;L^3 (\R^3)))}\\
\lec& \norm{F\chi}_{U^{\frac 32,\frac 32}(t_0,t)} 
+ \frac 1R \norm{F}_{U^{\frac 32,\frac 32}(t_0,t)}.
}
The last inequality follows from the Riesz potential estimate. Since
\[
|\chi(x)-\chi(y)|\leq \norm{\na\chi}_\infty |x-y| \lec \frac 1{\sqrt{R}}
\]
for $x\in B(x_0,\frac 32)$ and $y\in B(x_0,\sqrt R)$, and
\[
|x-y| \geq |x_0-y| - |x-x_0| \geq \frac 14 |x_0-y|
\]
for $x\in B(x_0, \frac 32)$ and $y\in B(x_0,2)^c$, we get
\EQN{
\norm{p^{F,3}\chi}_{L^{\frac 32}([t_0,t]\times B(x_0,\frac 32))}
\le & \ \norm{\int_{B(x_0,\sqrt{R})\setminus B(x_0,2)}  \frac1{|\cdot-y|^4} F_{ij}\chi(y) dy}_{L^{\frac 32}([t_0,t]\times B(x_0,\frac 32))}\\
& + \norm{\int_{B(x_0,\sqrt{R})\setminus B(x_0,2)}  \frac1{|\cdot-y|^4} F_{ij}(y) (\chi(\cdot)-\chi(y))dy}_{L^{\frac 32}([t_0,t]\times B(x_0,\frac 32))}\\
& + \norm{\int_{B(x_0,\sqrt{R})^c} \frac1{|\cdot-y|^4} F_{ij}(y) dy\chi}_{L^{\frac 32}([t_0,t]\times B(x_0,\frac 32))}.
}
Thus
\EQN{
\norm{p^{F,3}\chi}_{L^{\frac 32}([t_0,t]\times B(x_0,\frac 32))}
\lec & \ \sum_{k=1}^\infty \norm{\int_{B(x_0,2^{k+1})\setminus B(x_0,2^k)}  \frac1{|x_0-y|^4} |F_{ij}\chi(y)| dy}_{L^{\frac 32}(t_0,t;L^\infty( B(x_0,\frac 32)))}\\
& + \frac 1{\sqrt R}\sum_{k=1}^{\infty} \norm{\int_{B(x_0,2^{k+1})\setminus B(x_0,2^k)}  \frac1{|x_0-y|^4} |F_{ij}(y)| dy}_{L^{\infty}([t_0,t]\times B(x_0,\frac 32))}\\
& + \sum_{\lfloor \log_2 \sqrt R \rfloor}^\infty\norm{\int_{B(x_0,2^{k+1})\setminus B(x_0,2^k)} \frac1{|x_0-y|^4} |F_{ij}(y)| dy}_{L^{\frac 32}([t_0,t]\times B(x_0,\frac 32))}\\
\lec & \ \norm{ F\chi}_{L^{\frac 32}([t_0,t]\times B(x_0,\frac 32))}
 + \frac 1{\sqrt{R}} \norm{F}_{U^{\infty,1}(t_0,t)}.
}
Combining the estimates for $p^{F,i}\chi$, $i=1,2,3$, we obtain
\EQN{
\norm{p^F\chi}_{L^{\frac 32}([t_0,t]\times B(x_0,\frac 32))}
&\lec_T \norm{F\chi}_{U^{\frac 32,\frac 32}(t_0,t)} + \frac 1R\norm{F}_{U^{\frac 32,\frac 32}(t_0,t)} + \frac 1{\sqrt{R}}\norm{F}_{U^{\infty,1}(t_0,t)}\\
&\lec_{A,T}\norm{w\chi}_{U^{3,3}(t_0,t)} + \frac 1{\sqrt{R}}.
}

Now, we consider $p^{G,i}$'s. Since for $x\in B(x_0,\frac 32)$, $p^{G,1}$ satisfies
\EQN{
|p^{G,1}\chi(x,t)|
\leq&
\int_{|x_0-y|\leq 3} |(\na K)(x-y)||V||\na V|(y,t)(|\chi(y)|+|\chi(x)-\chi(y)|)dy\\
\lec& 
\int_{B_3(x_0)} \frac 1{|x-y|^2}||V||\na V(y,t)|\chi(y)dy+\frac 1R\int_{B_3(x_0)} \frac 1{|x-y|}|V||\na V(y,t)|dy
}
using $|\chi(x)-\chi(y)| \lec \norm{\nb \chi}_\infty |x-y|$,
the estimate for $p^{G,1}\chi$ can be obtained from Young's convolution inequality;
\EQN{
\norm{p^{G,1}\chi}_{L^{\frac 32}([t_0,t]\times B(x_0,\frac 32))}
\lec&_T 
\norm{\int_{|x_0-y|\leq 3} \frac 1{|\cdot-y|^2}||V||\na V|(y,t)\chi(y)dy}_{L^{2}([t_0,t]\times \R^3)}\\
&+\frac 1R\norm{\int_{|x_0-y|\leq 3} \frac 1{|\cdot-y|}|V||\na V|(y,t)dy}_{L^{\frac {20}{13}}(t_0,t;L^{\frac{30}7}(\R^3))}\\
\lec& \norm{\frac 1{|\cdot|^2}}_{\frac 32, \infty}\norm{|\na V|\chi}_{L^\infty_t(t_0,T;L^\frac 32(B(x_0,3)))}\norm{V}_{L^2(0,T; L^6(B(x_0,3)))}\\
&+ \frac 1R \norm{\frac 1{|\cdot|}}_{3,\infty} \norm{V}_{L^{\frac {20}3}(0,T; L^\frac52 (B(x_0,3)) )}\norm{\na V}_{U^{2,2}_T}\\
\lec&_{A,T} \norm{|\na V|\chi}_{U^{\infty,\frac 32}(t_0,T)}
+\frac 1R. 
}

By integration by parts, for $x \in B(x_0, \frac 32)$, $p^{G,2}$ can be rewritten as
\[
p^{G,2} =\int (K_{i}(\cdot-y)-K_{i}(x_0-y)) V_i\pa_jV_j(y,t)(\rho_\tau-\rho_2)(y)dy
\]
and then it satisfies
\EQN{
|p^{G,2}\chi(x,t)|
&\lec \int_{2<|x_0-y|\leq 2\tau} \frac 1{|x_0-y|^3}|V||\na V|(y,t)(|\chi(y)|+|\chi(x)-\chi(y)|) dy\\
&\lec \sum_{i=1}^{m_\tau} \int_{B_{i+1}\setminus B_i} \frac 1{|x_0-y|^3}|V||\na V|(y,t)\left(|\chi(y)|+\frac {\tau}R\right) dy,
} 
where $m_\tau = \lceil \ln (2\tau)/\ln 2 \rceil$ and $B_i = B(x_0,2^i)$. 
Taking $L^2(t_0,t)$ on it, we have
\EQN{
\norm{p^{G,2}\chi}_{L^2(t_0,t;L^\infty(B(x_0,\frac 32)))}
&\lec\norm{
\sum_{i=1}^{m_\tau} \int_{B_{i+1}\setminus B_i} \frac 1{|x_0-y|^3}|V||\na V|(y,t)\left(|\chi(y)|+\frac {\tau}R\right) dy}_{L^2(t_0,t)}\\
&\lec \sum_{i=1}^{m_\tau} \frac 1 {2^{3i}} \left(\norm{V|\na V|\chi}_{L^2(t_0,t;L^1(B_{i+1}))}
+ \frac {\tau}R\norm{V|\na V|}_{L^2(t_0,t;L^1(B_{i+1}))}\right)\\
&\lec \sum_{i=1}^{m_\tau}\bke{ (\norm{|V||\na V|\chi}_{U^{2,1}(t_0,T)}
+ \frac {\tau}R\norm{|V||\na V|}_{U^{2,1}_T} }\\
&\lec_T \ln \tau \norm{V}_{U^{\infty,2}_T}\norm{|\na V|\chi}_{U^{\infty,2}(t_0,T)}+ \frac {\tau\ln \tau}{ R}\norm{V}_{U^{\infty,2}_T}\norm{\na V}_{U^{2,2}_T}.
}

Lastly,
\[
|p^{G,3}(x,t)| \le \int_{|x_0-y|\geq \tau } \frac {|V(y,t)|^2}{|x_0-y|^4}  dy + \frac 1{\tau}\int_{\tau \leq |x_0-y| \leq 2\tau} \frac {|V(y,t)|^2}{|x_0-y|^3} dy
\le  \frac 1{\tau}\norm{V}_{U^{\infty,2}_T}^2 .
\]
Hence
\[
\norm{p^{G,3}\chi}_{L^\frac 32([t_0,t]\times B(x_0, \frac 32))}
\leq \norm{p^{G,3}}_{L^\frac 32([t_0,t]\times B(x_0, \frac 32))}
\lec_{A,T} \frac 1{\tau}
\]

To summarize, we have shown
\[
\sum_{i=1}^3\norm{p^{G,i}\chi}_{L^{\frac 32}([t_0,t]\times B(x_0,\frac 32))}
\lec_{A,T}\ln \tau \norm{|\na V|\chi}_{U^{\infty,2}(t_0,T)} + \frac {\tau \ln \tau}R + \frac 1 \tau,
\]
and therefore
\EQ{\label{est.decay.p}
\norm{\cp\chi}_{L^{\frac 32}([t_0,t]\times B(x_0,\frac 32))}
\lec_{A,T}\norm{w\chi}_{U^{3,3}(t_0,t)} + \frac 1{\sqrt R} +
\ln \tau \norm{|\na V|\chi}_{U^{\infty,2}(t_0,T)} + \frac {\tau \ln \tau}R + \frac 1 \tau.
}

Finally, combining all estimates and then taking supremum on \eqref{eq4.19} over $x_0\in \R^3$, we obtain
\EQ{\label{pre.decay.est00}
 \norm{w(\cdot,t)\chi}_{L^2_\uloc}^2
+&2\norm{|\na w| \chi }_{U^{2,2}(t_0,t)}^2\\
\lec_{A,T}& \norm{w(\cdot,t_0)\chi}_{L^2_\uloc}^2 
+\norm{w\chi}_{U^{2,2}(t_0,t)}^2 
+\norm{w\chi}_{U^{3,3}(t_0,t)}^2\\ 
&+
(\ln \tau)^2 \norm{|\na V|\chi}_{U^{\infty,3}(t_0,T)}^2 + \frac {(\tau \ln \tau)^2}{R^2} + \frac 1 {\tau^2}+ \frac 1{R}.
}
Using the estimates
\EQ{\label{U33.est}
\norm{w \chi}_{U^{3,3}(t_0,t)}^2
\lec \norm{w \chi}_{U^{6,2}(t_0,t)}
\bke{
\norm{w \chi}_{U^{2,2}(t_0,t)} 
+\norm{|\na w| \chi}_{U^{2,2}(t_0,t)}
+\frac 1R\norm{w}_{U^{2,2}_T}
},
}
and Lemma \ref{th0730b},
it becomes
\EQ{\label{pre.decay.est}
\norm{w(\cdot,t)\chi}_{L^2_\uloc}^2
+&\norm{|\na w| \chi }_{U^{2,2}(t_0,t)}^2\\
\lec_{A,T,C_0}&  \
t_0^{\frac 1{10}}+\norm{w_0\chi}_{L^2_\uloc}^2 
+\norm{w\chi}_{L^6(t_0,t;L^2_\uloc)}^2\\ 
&+
(\ln \tau)^2 \norm{|\na V|\chi}_{U^{\infty,3}(t_0,T)}^2 + \frac {(\tau \ln \tau)^2}{R^2} + \frac 1 {\tau^2}+ \frac 1{R},
}
where $C_0$ is defined as in Lemma \ref{th0730b}.

Note that $\norm{w(\cdot,t)\chi}_{L^2_\uloc}^2$ is lower semicontinuous in $t$ as $w$ is weakly $L^2(B_n)$-continuous in $t$ for any $n$.
By Gr\"{o}nwall inequality,
we have 
\EQ{\label{decay.est}
\norm{w\chi}_{L^6(t_0,T;L^2_\uloc)}^2
\lec_{A,T,C_0}&  \ t_0^{\frac 1{10}}+\norm{w_0\chi}_{L^2_\uloc}^2 
\\ 
&+(\ln \tau)^2 \norm{|\na V|\chi}_{U^{\infty,3}(t_0,T)}^2+ \frac {(\tau \ln \tau)^2}{R^2} + \frac 1 {\tau^2}+ \frac 1{R}.
}

We now prove \eqref{dot.E.3-decay}. 
Fix $t_1\in (0,T)$. %
For every $n\in \NN$ we can choose $t_0=t_0(n)\in (0,t_1)\setminus \Si$ satisfying
\[
t_0^{\frac 1{10}}<\tfrac 1n.
\] 
At the same time, we pick $\tau=\tau(n)>4$ satisfying $\tau^{-2} \leq 1/n$. 
After $t_0$ and $\tau$ are fixed, we can make all the remaining terms small by choosing $R=R(n,\norm{v_0}_{L^2_\uloc}, t_0,\tau)$ sufficiently large:
\[
\norm{w_0\chi_R}_{L^2_\uloc}^2+(\ln \tau)^2 \norm{|\na V|\chi_R}_{U^{\infty,3}(t_0,T)}^2+ \frac {(\tau \ln \tau)^2}{R^2} + \frac 1{R}\leq \frac 1n.
\]
Here, the smallness of the second term follows from $\nb V$ decay (Lemma \ref{decay.na.U}), using the oscillation decay of $v_0$. 
In conclusion, by \eqref{decay.est}, for each $n\in \NN$, we can find $t_0$, $\tau$ and $R\gg 1$ so that
\[
\norm{w\chi_R}_{L^6(t_0,T;L^2_\uloc)}^2
\lec_{A,T,C_0}\frac 1n. 
\]
By \eqref{pre.decay.est},
\[
\norm{w\chi_R}_{L^\infty(t_0,T;L^2_\uloc)}^2
+\norm{|\na w|\chi_R}_{U^{2,2}(t_0,T)}\lec_{A,T,C_0} \frac 1n.
\]
By \eqref{U33.est},
\[
\norm{w\chi_R}_{U^{3,3}(t_0,T)}^2 
\lec_{A,T,C_0} \frac 1n.
\]

Restricted to the original time interval $(t_1,T)$, the perturbation $w$ satisfies
\[
\lim_{R\to \infty}\norm{w\chi_R}_{U^{3,3}(t_1,T)} = 0,
\]
\[
\lim_{R\to \infty}\norm{w\chi_R}_{L^\infty(t_1,T;L^2_\uloc)}^2
+\norm{|\na w|\chi_R}_{U^{2,2}(t_1,T)} =0.
\]
Using \eqref{est.decay.p}, we also have
\[
\lim_{R\to \infty}\sup_{x_0\in \R^3} \norm{\cp_{x_0}\chi_R}_{L^{\frac32}(B(x_0,\frac32)\times(t_1,T))} = 0.
\]
This completes the proof of Proposition \ref{dot.E.3}.
\end{proof}

\begin{corollary}\label{w.E4}
Under the same assumptions of Proposition \ref{dot.E.3}, the perturbed Navier-Stokes flow $w= v-e^{t\De}u_0$ satisfies $w(t)\in E^p(\R^3)$ for almost all $t\in (0,T]$ for any $3\leq p\leq  6.$
\end{corollary}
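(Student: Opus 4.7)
\textbf{Proof plan for Corollary~\ref{w.E4}.} The strategy is to reduce the full range $p \in [3,6]$ to the endpoint $p = 6$, and then to upgrade the spatial decay estimates of Proposition~\ref{dot.E.3} to pointwise-in-time $L^6$ decay via Sobolev embedding. The reduction is by H\"older's inequality on the unit ball: for $p \in [3,6]$,
\[
\norm{w(t)}_{L^p(B(x_0,1))} \le |B(x_0,1)|^{1/p - 1/6}\norm{w(t)}_{L^6(B(x_0,1))},
\]
so $E^6 \subset E^p$ on balls of fixed radius, and it suffices to show $w(t) \in E^6$ for a.e.~$t \in (0,T]$.

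For the $E^6$ decay, I would first extract from the proof of Proposition~\ref{dot.E.3} (cf.\ the estimates following \eqref{decay.est}) that for every $t_0 \in (0,T)\setminus \Sigma$,
\[
\norm{w\chi_R}_{L^\infty(t_0,T; L^2_\uloc)} + \norm{|\nabla w|\chi_R}_{U^{2,2}(t_0,T)} \longrightarrow 0 \quad\text{as } R \to \infty.
\]
Then I would apply the Sobolev embedding $H^1(B(x_0,3/2)) \hookrightarrow L^6(B(x_0,1))$ to $w(t)\chi_R\eta_{x_0}$ for a smooth cutoff with $\mathbf{1}_{B(x_0,1)} \le \eta_{x_0} \le \mathbf{1}_{B(x_0,3/2)}$. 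Using $|\nabla \chi_R| \lesssim R^{-1}$, this yields the pointwise-in-$t$ estimate
\[
\norm{w(t)\chi_R}_{L^6(B(x_0,1))}^2 \lesssim \norm{|\nabla w(t)|\chi_R}_{L^2(B(x_0,3/2))}^2 + \norm{w(t)\chi_R}_{L^2(B(x_0,3/2))}^2 + R^{-2}\norm{w(t)}_{L^2(B(x_0,3/2))}^2,
\]
so that, integrating over $(t_0,T)$ and taking $\sup_{x_0}$,
\[
\sup_{x_0 \in \R^3} \int_{t_0}^T \norm{w(t)\chi_R}_{L^6(B(x_0,1))}^2 \, dt \longrightarrow 0 \quad\text{as } R \to \infty.
\]

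Since $\chi_R \equiv 1$ on $B(x_0,1)$ whenever $|x_0| > 3R/2 + 1$, the last display implies $\int_{t_0}^T \norm{w(t)}_{L^6(B(x_0,1))}^2 \, dt \to 0$ as $|x_0| \to \infty$. To convert this integrated decay to the pointwise-in-$t$ statement $w(t) \in E^6$, I would pick a countable dense collection of sequences $\{x_0^{(n)}\}$ going to infinity in $\R^3$, apply Fatou and Chebyshev to extract a.e.-$t$ subsequential convergence along each, intersect the countably many resulting null sets, and use the continuity of $x_0 \mapsto \norm{w(t)}_{L^6(B(x_0,1))}$ together with the $L^\infty_t L^2_\uloc$ decay (which already gives $w(t) \in E^2$ for a.e.~$t$) to pass to the full limit $|x_0|\to \infty$. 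Letting $t_0 \to 0$ through $(0,T)\setminus \Sigma$ then covers a.e.~$t \in (0,T]$. The main obstacle is precisely the exchange of the spatial supremum with the time integral: the decay estimates supply $\sup_{x_0}$ \emph{outside} the time integral, whereas the $E^6$ condition requires a pointwise-in-$t$ decay (i.e., the supremum effectively \emph{inside}). This is handled by the Borel--Cantelli/density argument together with the monotonicity of $R \mapsto \norm{w(t)\chi_R}$.
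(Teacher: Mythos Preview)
Your approach is genuinely different from the paper's, and the step you flag as the ``main obstacle'' is in fact a real gap that your sketch does not close. The decay estimates of Proposition~\ref{dot.E.3} give $\norm{|\nabla w|\chi_R}_{U^{2,2}(t_0,T)}\to 0$, i.e.\ $\sup_{x_0}\int_{t_0}^T\norm{\nabla w(t)\chi_R}_{L^2(B(x_0,1))}^2\,dt\to 0$, with the supremum \emph{outside} the time integral. To conclude $w(t)\in E^6$ for a.e.\ $t$ you would need $\int_{t_0}^T\norm{w(t)\chi_R}_{L^6_\uloc}^2\,dt\to 0$, i.e.\ the supremum \emph{inside}. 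The paper itself (see \eqref{Usp.le.LsUp} and the Example immediately following it) shows that $U^{s,p}$ and $L^s(t_0,T;L^p_\uloc)$ are genuinely different and that the reverse inequality fails. Your Borel--Cantelli/density argument does not bridge this: from $\int_{t_0}^T\norm{w(t)}_{L^6(B(y_k,1))}^2\,dt\to 0$ along $y_k\in\Z^3$ you cannot deduce a.e.-$t$ convergence along the \emph{full} lattice (only along a summable, hence sparse, subsequence), and continuity in $x_0$ cannot recover the missing points. The monotonicity in $R$ does not help either, since you never control $\int G_R(t)\,dt$ for $G_R(t)=\norm{w(t)\chi_R}_{L^6_\uloc}$.

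The paper avoids this difficulty entirely by invoking the Caffarelli--Kohn--Nirenberg $\epsilon$-regularity criterion: once $\norm{w}_{L^3(Q_{x_0})}+\norm{\cp_{x_0}}_{L^{3/2}(Q_{x_0})}$ is small (which Proposition~\ref{dot.E.3} provides for $|x_0|$ large), $w$ is bounded on $[t_2,T]\times B_1(x_0)$ with $\norm{w}_{L^\infty([t_2,T]\times B_1(x_0))}\to 0$ as $|x_0|\to\infty$. This yields decay in $L^\infty$, uniform in $t\in[t_2,T]$, on the exterior region $\{|x_0|>R_0\}$; the bounded region $B_{R_0}$ is then handled by $w\in\cE_T\subset L^s(0,T;L^p(B_{R_0}))$ for a single fixed ball, where no uniformity in $x_0$ is needed. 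The key point is that $\epsilon$-regularity trades a space-time smallness assumption for a pointwise-in-time (indeed $L^\infty_{t,x}$) conclusion, which Sobolev embedding alone cannot do.
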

\begin{proof}
By Proposition \ref{dot.E.3}, for any fixed $x_0\in \R^3$ and $t_1\in (0,T)$, 
the perturbed local energy solution $w$ to Navier-Stokes equations satisfies 
\[
\norm{w}_{L^3(B_{3/2}(x_0) \times (t_1,T))}
+\norm{\cp_{x_0}}_{L^{3/2}(B_{3/2}(x_0) \times (t_1,T))} \to 0 \quad\text{as }|x_0|\to \infty.
\]
Recall that $V\in C^1([\de,\infty)\times \R^3)$ for any $\de>0$. 
Then, by Caffarelli-Kohn-Nirenberg criteria \cite{CKN},
for any $t_2\in (t_1,T]$, we can find $R_0>0$ such that if $|x_0|\geq R_0$, 
 \[
 \norm{w}_{L^\infty([t_2,T] \times B_1(x_0))}
 \lec  \norm{w}_{L^3(B_{3/2}(x_0) \times (t_1,T))}
 +\norm{\cp_{x_0}}_{L^{3/2}(B_{3/2}(x_0) \times (t_1,T))}^{1/2},
 \]
and the constant in the inequality is independent of $x_0$. Moreover, $ \norm{w}_{L^\infty([t_2,T] \times B_1(x_0))} \to 0$ as $|x_0|\to \infty$.  Although the system \eqref{PNS} satisfied by $w$ is not the original \eqref{NS}, similar proof works since $V \in C^1$. See \cite[Theorem 2.1]{JiaSverak} for more singular $V\in L^m$, $m>1$, but without the source term $V \cdot \nb V$.

 On the other hand, $w\in \cE_T$ implies that \[
 w\in L^ s(0,T;L^p(B_{R_0}))
 \]
 for any $s\in [2,\infty]$ and $p\in [2,6]$ with $\frac 2s + \frac 3p = \frac 32$, 
 and therefore $w(t)\in E^p$ for a.e.~$t\in (0,T]$.
\end{proof}

\section{Global existence}\label{global.sec}

In this section, we prove Theorem \ref{global.ex}. We first give the following decay estimates.  

\begin{lemma}\label{dot.E.3.2} 
Let $(v,p)$ be a local energy solution in $\R^3\times [t_0,T]$, $0<t_0<T<\infty$, to the Naiver-Stokes equations \eqref{NS} for the initial data
\[
v|_{t=t_0}=w_* + e^{t_0\De}u_0
\] 
where $w_*\in E^2_\si$ and $u_0 \in L^3_{\uloc,\si}$ satisfies the oscillation decay \eqref{ini.decay}.
Let $V(t)=e^{t\De}u_0$. Then, the perturbation $w= v-V $ also decays at infinity:
\[
\norm{w}_{L^3([t_0,T]\times B(x_0,1) )}+\norm{\cp_{x_0}}_{L^{\frac 32}([t_0,T]\times B(x_0,1) )} \to 0, %
\]
and
\[
\norm{w}_{L^\infty(t_0,T;L^2(B(x_0,1)))}
+\norm{\na w}_{L^2(t_0,T;L^2(B(x_0,1)))} \to 0 ,\quad\text{as }|x_0|\to \infty.
\]
\end{lemma}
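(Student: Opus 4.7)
The plan is to repeat the local-energy argument of Proposition \ref{dot.E.3}, taking advantage of two simplifications that come from the shifted initial time $t_0>0$. First, the perturbation at the initial time is already in $E^2$, $w(t_0)=w_*$, so $\norm{w_*\chi_R}_{L^2_\uloc}\to 0$ as $R\to\infty$ automatically, and the initial contribution $\int |w_*|^2\xi\,dx$ in the energy inequality is small uniformly. Second, by Lemma \ref{decay.na.U}, the gradient $\nb V$ enjoys the uniform-in-$t$ spatial decay $\sup_{s>t_0}\norm{\nb V(\cdot,s)}_{L^\infty(B_1(x_0))}\to 0$ as $|x_0|\to\infty$; in particular, neither the short-time lemma (Lemma \ref{th0730b}) nor the strong local energy inequality (Lemma \ref{th:SLEI}) is needed, because the initial time $t_0$ is already bounded away from $0$ and $(v,p)$ is given directly as a local energy solution on $[t_0,T]$.

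First I would write the local energy inequality for $w=v-V$ on $[t_0,T]$ with test function $\ph(x,s)=\xi(x)\th_\ep(s)$, where $\xi=\phi_{x_0}^2\chi_R^2$ with $\phi_{x_0}=\Phi(\cdot-x_0)$, and $\th_\ep$ is a temporal cutoff compactly supported in $(t_0,T)$ which converges to $\mathbf{1}_{[t_0,t]}$. The equivalence between the local energy inequality for $(v,p)$ and that for $(w,p)$, exactly as in Lemma \ref{th:LEI.w.0}, is valid here because $V$ and $\nb V$ lie in $L^\infty_\loc([t_0,T]\times\R^3)$. Sending $\ep\to 0$ and using the strong $L^2_\loc$-continuity at $s=t_0$ coming from the definition of a local energy solution with initial time $t_0$ yields
\EQN{
\int|w(t)|^2\xi\,dx+2\int_{t_0}^t\!\!\int|\nb w|^2\xi\,dxds\le \int|w_*|^2\xi\,dx+\text{(lower-order terms)},
}
which is the exact analogue of \eqref{eq4.19} with $w(t_0)=w_*$.

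Next I would substitute the new pressure decomposition $p=\cp_{x_0}+q_{x_0}(s)$ from Lemma \ref{new.decomp}, noting that the purely time-dependent piece $q_{x_0}(s)$ drops out against $\int w\cdot\nb\xi\,dx=0$. Each remaining term is then estimated verbatim as in the proof of Proposition \ref{dot.E.3}: the quadratic-in-$w$ terms use the cutoff $\chi_R$ together with the energy norm of $w$ on $[t_0,T]$; the cross terms with $V$ use H\"older and Young combined with the $U^{8,4}$ bound on $V$ from Lemma \ref{th0803}; the drift term $(v\cdot\nb)V\cdot w\xi$ uses the $\nb V$-decay of Lemma \ref{decay.na.U} on $[t_0,T]$; and the pressure term is bounded by splitting $\cp_{x_0}=p^F+p^{G,1}+p^{G,2}+p^{G,3}$ and applying Calder\'on--Zygmund, Riesz-potential and elementary pointwise kernel estimates to obtain the analogue of \eqref{est.decay.p}.

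The main obstacle is, as before, closing a Gr\"onwall estimate while simultaneously extracting the pressure decay in $L^{3/2}$. This is where we need the freedom to pick the auxiliary radius $\tau$ first (killing the tail $p^{G,3}$), then $R\gg\tau$ (killing the various $R^{-1}$ remainders and the $\nb V$ contribution via Lemma \ref{decay.na.U} on $[t_0,T]$), and finally using the smallness of $\norm{w_*\chi_R}_{L^2_\uloc}$ from $w_*\in E^2$. Gr\"onwall's inequality then gives $\norm{w\chi_R}_{L^6(t_0,T;L^2_\uloc)}\to 0$ as $R\to\infty$; fed back into the pointwise-in-$t$ version of the energy inequality this yields $\norm{w\chi_R}_{L^\infty(t_0,T;L^2_\uloc)}+\norm{|\nb w|\chi_R}_{U^{2,2}(t_0,T)}\to 0$, the interpolation \eqref{U33.est} upgrades this to $\norm{w\chi_R}_{U^{3,3}(t_0,T)}\to 0$, and substituting back into the analogue of \eqref{est.decay.p} yields the claimed decay of $\cp_{x_0}$ in $L^{3/2}((t_0,T)\times B(x_0,1))$. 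Taking supremum-in-$x_0$ at each step produces the uniform statement in the lemma.
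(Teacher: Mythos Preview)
Your proposal is correct and follows essentially the same route as the paper: reproduce the argument of Proposition \ref{dot.E.3}, replacing the use of Lemma \ref{th0730b} and Lemma \ref{th:SLEI} by the direct facts that $w(t_0)=w_*\in E^2$ and $V\in C^1(\R^3\times[t_0,T])$. The only minor slip is your reference to the $U^{8,4}$ bound on $V$ from Lemma \ref{th0803} for the cross terms; in Proposition \ref{dot.E.3} the term $\int(v\cdot\nb)V\cdot w\xi$ is handled via $\norm{|\nb V|\chi}_{U^{\infty,3}(t_0,T)}$, and that is what you need here as well.
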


{\it Remark.} This $T$ is arbitrarily large, unlike the existence time given in the local existence theorem, Theorem \ref{loc.ex}. We assume $w_*\in E^2$, and we have $V \in C^1(\R^3 \times [t_0,T])$. We no longer need Lemma \ref{th0730b} nor the strong local energy inequality.
\begin{proof}
The proof is almost the same as that of Proposition \ref{dot.E.3} except for the way to estimate $\norm{w(\cdot, t_0)\chi_R}_{L^2_\uloc}$ in \eqref{pre.decay.est00}. Indeed, $\lim_{R \to \infty}\norm{w(\cdot, t_0)\chi_R}_{L^2_\uloc} = 0$ by the assumption $w(\cdot, t_0)=w_*\in E^2$.
\end{proof}

\medskip

Now, we prove the main theorem. 
\begin{proof}[Proof of Theorem \ref{global.ex}.]
Let $(v,p)$ be a local energy solution to the Naiver-Stokes equations in $\R^3\times[0,T_0]$, $0<T_0<\infty$, for the initial data $v|_{t=0}=v_0$, constructed in Theorem \ref{loc.ex}. By Corollary \ref{w.E4}, there exists $t_0\in (0,T_0)$, arbitrarily close to $T_0$, with  $w(t_0)=v(t_0)-e^{t_0\De}u_0\in E^4$. Then, by Lemma \ref{decomp.Ep}, for any small $\de>0$, we can decompose
\[
w(t_0) = W_0 + h_0, 
\]
where $W_0\in C_{c,\si}^\infty(\R^3)$ and $h_0\in E^4(\R^3)$ with $\norm{h_0}_{L^4_\uloc}<\de$.

To construct a local energy solution $(\td v, \td p)$ to \eqref{NS} for $t \ge t_0$ with initial data $\td v|_{t=t_0} = v(t_0)$, we decompose $(\td v, \td p)$ as 
\[
\td v = V + h+ W, \quad \td p = p_h + p_W
\]
where $V(t)=e^{t\De} v_0$, $(h,p_h)$ satisfies 
\EQ{\label{eqn.h}
\begin{cases}
\pa_t h -\De h + \na p_h = -H \cdot \nb H, \quad H = V+h,\\
\div h =0,\quad
h|_{t=t_0} = h_0,
\end{cases}
}
so that $H$ solves \eqref{NS} with $H(t_0)=e^{t_0\De}u_0+h_0$, 
and $(W,p_W)$ satisfies 
\EQ{\label{eqn.td.w}
\begin{cases}
\pa_t W -\De W  + \na p_W 
= -[(H+W)\cdot \na]W -(W \cdot \na)H,  \\
\div W = 0,\quad
W|_{t=t_0} = W_0.  
\end{cases}
}

Our strategy is 
to first find, for each $\ep>0$,
 a distributional solution $(h^\ep,p_h^\ep)$ and a Leray-Hopf weak solution $(W^\ep,p_W^\ep)$ to
$\ep$-approximations of \eqref{eqn.h} and \eqref{eqn.td.w} for $t\in I$ for some $S=S(\de, V)>0$ uniform in $\ep$. Then, we prove that they have a limit $(\td v, \td p)$ which is a desired local energy solution to \eqref{NS} on $I$. By gluing two solutions $v$ and $\td v$ at $t=t_0$, we can get the extended local energy solution on the time interval $[0, t_0+S]$. Repeating this process, we get a time-global local energy solution. The detailed proof is given below.

\bigskip

\noindent\texttt{Step 1.} Construction of approximation solutions

\medskip

Let $I=(t_0,t_0+S)$ for some small $S\in (0,1)$ to be decided.
For $0 < \ep <1$, we first consider the fixed point problem for
\EQ{\label{op.eqn.h}
 \Psi (h)&=e^{(t-t_0)\De}h_0
-\int_{t_0}^t e^{(t-s)\De}\mathbb{P} \na \cdot(\cJ H \otimes H \Phi_\ep)(s) ds,
\quad H = V + h,
}
where $\cJ(H) = H\ast \eta_\ep$ is mollification of scale $\e$ and $\Phi_\ep(x) = \Phi(\ep x)$ is a localization factor of scale $\ep^{-1}$. We will solve for a fixed point $h=h^\ep$ 
in the Banach space
\[
\cF =\cF_{t_0,S} :=\{h \in U^{\infty,4}(I): (t-t_0)^{\frac 38}h(\cdot,t) \in L^\infty(I\times \R^3) \}
\]
for some small $S >0$
with
\[
\norm{h}_{\cF} := \norm{h}_{U^{\infty,4}(I)} + \norm{(t-t_0)^\frac 38h(t)}_{L^\infty(I\times \R^3)}.
\]
Denote $M= \norm{V}_{L^{\infty}(I\times \R^3)}\lec (1+t_0^{-4/3})\norm{v_0}_{L^2_\uloc}$.
By Lemma \ref{lemma23}, we have
\EQN{
\norm{\Psi h}_{U^{\infty,4}(I)}
&\lec \norm{h_0}_{L^4_\uloc} 
+S^\frac 18\norm{h}_{U^{\infty,4}}^2
+ S^\frac 12M\norm{h}_{U^{\infty,4}}
+ S^\frac 12\norm{V}_{L^\infty(I;L^8_\uloc)}^2 
\\
&\lec \norm{h_0}_{L^4_\uloc} 
+S^\frac 18\norm{h}_{\cF}^2
+ S^\frac 12M\norm{h}_{\cF}
+ S^\frac 12M^2 ,
}
and for $t \in I$,
\EQN{
\norm{\Psi h(t)}_{L^\infty(\R^3)}
&\lec (t-t_0)^{-\frac 38}\norm{h_0}_{L^4_\uloc} 
+\int _{t_0}^t |t-s|^{-1/2} \bke{ \norm{h(s)}_{L^\infty}^2 + M^2} ds
\\
& \lec (t-t_0)^{-\frac 38}\norm{h_0}_{L^4_\uloc}+(t-t_0)^{-1/4} \norm{h}_{\cF}^2 + (t-t_0)^{1/2} M^2.
}

Therefore, we get
\EQN{
\norm{\Phi h}_{\cF}&\lec
\norm{h_0}_{L^4_\uloc} 
+S^\frac 18\norm{h}_{\cF}^2
+ S^\frac 12M\norm{h}_{\cF}
+ S^\frac 12M^2 .
}
Similarly we can show
\EQN{
\norm{\Phi h_1-\Phi h_2}_{\cF}&\lec
\bket{ S^\frac 18(\norm{h_1}_{\cF}+\norm{h_2}_{\cF}) + S^\frac 12M} \norm{h_1-h_2}_{\cF} .
}
By the Picard contraction theorem, we can find $S=S(\de,\norm{V}_{L^\infty(I\times\R^3)} )\in (0,1)$ such that
a unique fixed point (mild solution) $h^\ep$ to \eqref{op.eqn.h} exists in $\cF_{t_0,S}$ with
\EQ{\label{est1.hep}
\norm{h^\ep}_{\cF} \leq C\de, \qquad\forall 0<\ep<1.
}
We also have the uniform bound
\EQ{\label{est2.hep}
\norm{h^\ep}_{\cE(I)} \lec \norm{\cJ H^\ep \otimes H^\ep \Phi_\ep}_{U^{2,2}(I)}\lec \norm{h^\ep}_{\cF}^2 + \norm{V}_{U^{4,4}(I)}^2 \lec \de^2+M^2.
}

Now, we define $H^\ep = V+ h^\ep$ and the pressure $p_h^\ep$ by
\EQN{
p_h^\ep &= -\frac 13 \cJ H^\ep \cdot H^\ep \Phi_\ep
+\pv \int_{B_2} K_{ij}(\cdot-y)(\cJ H^\ep )_i H^\ep _j\Phi_\ep(y,t) dy\\
&+\pv \int_{B_2^c} (K_{ij}(\cdot-y)-K_{ij}(-y))(\cJ H^\ep) _i H^\ep _j\Phi_\ep(y,t) dy.
}
It is well defined thanks to the localization factor $\Phi_\ep$.
For each $R>0$, we have a uniform bound
\EQ{\label{est.phep}
\norm{p_h^\ep}_{L^\frac 32(I\times B_R)}
\le C(R)
}
in a similar way to getting \eqref{uni.bdd.p}. The pair $(h^\ep, p^\ep_h)$ solves, with $H^\ep = V + h^\ep$,
\EQ{\label{eqn.hep}
\begin{cases}
\pa_t h^\ep -\De h^\ep + \na p_h^\ep = 
-(\cJ H^\ep \cdot \na)(H^\ep \Phi_{\ep}) ,\\
\div h^\ep =0,\quad
h^\ep|_{t=t_0} = h_0\in L^4_\uloc
\end{cases}
}
in $\R^3 \times I$ in the distributional sense.

We next consider the equation for $W=W^\ep$,
\EQ{\label{eqn.Wep}
\begin{cases}
\pa_t W -\De W  + \na p_W 
= f_W^\ep
\\
f_W^\ep:=- \cJ (H^\ep+ W)\cdot \nb W - \cJ W \cdot \nb H^\ep,  
\\
\div W = 0,\quad
W|_{t=t_0} = W_0 \in C^\infty_{c,\si}.
\end{cases}
}
Note that \eqref{eqn.Wep} is a mollified and perturbed \eqref{NS}, and has no localization factor $\Phi_\e$.

Using $W^\ep$ itself as a test function, we can get an a priori estimate: for $t\in I$, 
\[
\norm{W(t)}_{L^2(\R^3)}^2 + 2\norm{\na W}_{L^2([t_0,t]\times \R^3)}^2
\le \norm{W_0}_{L^2(\R^3)}^2 + \iint f_W^\ep \cdot W.
\]
Note that $\iint  \cJ (H+ W)\cdot \nb W \cdot W=0$ and $-  \iint  (\cJ W \cdot \nb) h^\ep \cdot W=\iint(\cJ W \cdot \nb) W \cdot h^\ep $. Also recall that
\[
\norm{h^\ep W}_{L^2(Q)}
\lec \norm{h^\ep}_{L^\infty(I;L^3_\uloc)} (\norm{\na W}_{L^2(Q)}
+\norm{ W}_{L^2(Q)})
\]
for $Q=[t_0,t]\times \R^3$. Its proof can be found in \cite[page 162]{KS}. Thus
\EQN{
 \iint f_W^\ep \cdot W 
&= \iint  (\cJ W \cdot \nb) W \cdot h^\ep -  \iint  (\cJ W \cdot \nb) V \cdot W
\\
&\le C\norm{\na W}_{L^2(Q)} \de (\norm{\na W}_{L^2(Q)}
+\norm{ W}_{L^2(Q)})
 + M_1 \norm{W}_{L^2(Q)}^2.
}
where $M_1= \norm{\nb V}_{L^{\infty}(I\times \R^3)}$.
By choosing $\de$ sufficiently small, we conclude
\[
\norm{W(t)}_{L^2(\R^3)}^2 + \norm{\na W}_{L^2([t_0,t]\times \R^3)}^2
\le \norm{W_0}_{L^2(\R^3)}^2 + C(1+M_1)  \norm{W}_{L^2(Q)}^2.
\]
By Gr\"{o}nwall inequality (using that $\norm{W(t)}_{L^2(\R^3)}^2$ is lower semicontinuous), we obtain
\EQ{\label{est.Wep}
\norm{W^\ep}_{L^\infty(I;L^2(\R^3))}^2 &+ \norm{\na W^\ep}_{L^2(I\times \R^3)}^2
\leq C(M_1) \norm{W_0}_{L^2(\R^3)}^2.
}
With this uniform a priori bound, for each $0<\ep<1$, we can use Galerkin method to construct a Leray-Hopf weak solution $W^\ep$ on $I \times \R^3$ to \eqref{eqn.Wep}.

Define $F^\ep_{ij} = \cJ( W^\ep+  H^\ep) _iW^\ep_j + (\cJ W^\ep)_iH^\ep _j $. We have the uniform bound
\[
\norm{F^\ep_{ij}}_{U^{3/2,3/2}(I)} \le C \norm{|V|+|h^\ep|+|W^\ep|}_{U^{3,3}(I)}^2 \le C(M,M_1,\norm{W_0}_{L^2(\R^3)}).
\]
Define
$p^\ep_W (x,t) = \lim_{n \to \infty} p^{\ep,n}_W(x,t) $, and $p^{\ep,n}_W(x,t) $ is defined for $|x|<2^n$ by
\EQN{
p^{\ep,n}_W(x,t) 
=& -\frac 13 \tr F^\ep_{ij}(x,t) +\pv \int_{B_2(0)} K_{ij}(x-y)F^\ep_{ij} (y,t) dy\\
&+\bke{\pv \int_{B_{2^{n+1}}\setminus B_2} + \int_{B_{2^{n+1}}^c}}  (K_{ij}(x-y)-K_{ij}(-y))F^\ep_{ij}(y,t) dy.
}
For each $R>0$, we have a uniform bound
\EQ{\label{est.pWep}
\norm{p_W^\ep}_{L^\frac 32(I\times B_R)}
\le C(R,M,M_1,\norm{W_0}_{L^2(\R^3)}). 
}
By the usual theory for the nonhomogeneous Stokes system in $\R^3$, the pair $(W^\ep,p^\ep_W)$ solves \eqref{eqn.Wep}  in distributional sense.

We now define 
\[
 v^\ep = H^\ep + W^\ep = V + h^\ep + W^\ep,\quad
  p^\ep = p_h^\ep + p_W^\ep. 
\]
Summing \eqref{eqn.hep} and  \eqref{eqn.Wep},
the pair $(v^\ep,p^\ep)$ solves in distributional sense
\EQ{\label{eqn.vep}
\begin{cases}
\pa_t  v^\ep-\De v^\ep  + \na p^\ep
=- \cJ v^\ep \cdot \nb v^\ep  + E^\ep,  
\\
\hspace{26.5mm}
E^\ep = \cJ H^\ep \cdot \nb (H^\ep(1-\Phi_\ep)),
\\
\div v^\ep = 0,\quad
v^\ep|_{t=t_0} = v(t_0).
\end{cases}
}
Thanks to the mollification, $h^\ep$ and $W^\ep$ have higher local integrability by the usual regularity theory. Thus we can test \eqref{eqn.vep} by $2v^\ep\xi$, %
$\xi \in C^\infty_c([t_0,t_0+S) \times \R^3)$, and integrate by parts to get the identity
\EQ{\label{LEI.vep}
& 2\int_{I} \!\int |\na v^\ep |^2\xi \,dxds = \int |v|^2 \xi (x,t_0)\,dx \\
&\quad + \int_{I} \!\int |v^\ep |^2(\pa_s\xi + \De \xi) + (|v^\ep |^2\cJ v^\ep +2p^\ep  v^\ep )\cdot \na \xi +E^\ep \cdot 2v^\ep \xi\,dxds .
}
Note that $v$ in $\int |v|^2 \xi (x,t_0)\,dx$ is the original solution in $[0,T)$.

\bigskip

\noindent\texttt{Step 2.} A local energy solution on $I=(t_0,t_0+S)$

\medskip

We now show that $(v^\ep,p^\ep)$ has a weak limit $(\td v, \td p)$ which is a local energy solution on $I$. 
Recall the uniform bounds \eqref{est1.hep}, \eqref{est2.hep}, \eqref{est.phep}, \eqref{est.Wep}, and \eqref{est.pWep} for $h^\ep,p_h^\ep,W^\ep$ and $p_W^\ep$.
As in the proof of Theorem \ref{loc.ex}, from the uniform estimates and the compactness argument, we can find a subsequence $(v^{(k)}, p^{(k)})$, $k \in \NN$, from $(v^\ep,  p^\ep)$ which converges to some $(\td v, \td p)$ in the following sense: for each $n\in \NN$,
\EQN{
v^{(k)} &\stackrel{\ast}{\rightharpoonup} \td v \qquad\qquad \text{in } L^\infty(I;L^2(B_{2^n})),  \\
v^{(k)} &\rightharpoonup \td v \qquad\qquad\text{in }L^2(I;H^1(B_{2^n})),\\
v^{(k)}, {\cal J}_{(k)}v^{(k)}  &\rightarrow \td v \qquad\qquad\text{in }L^3(I\times B_{2^{n}}), \\ 
p^{(k)} &\to \td{p} \qquad\qquad \text{in }L^{\frac 32}(I\times B_{2^{n}}), 
}
where
$\td p(x,t) = \lim_{n \to \infty}\td p^n(x,t) $, and $\td p^n(x,t) $ is defined for $|x|<2^n$ by
\EQN{
\td p^n(x,t) 
=&-\frac 13 |\td v(x,t)|^2 +\pv \int_{ B_2}  K_{ij}(x-y) \td v_i\td v_j(y,t) \, dy 
\\
&+\bke{\pv \int_{B_{2^{n+1}}\setminus B_2} + \int_{B_{2^{n+1}}^c}}  (K_{ij}(x-y)-K_{ij}(-y)) \td v_i\td v_j(y,t)  \, dy .
}

Taking the limit of the weak form of \eqref{eqn.vep}, we obtain that
 $(\td v, \td p)$ satisfies the weak form of \eqref{NS} for the initial data $\td v|_{t=t_0} = v(t_0)$. Furthermore, %
 the limit of \eqref{LEI.vep} gives us the local energy inequality: For any 
$\xi \in C^\infty_c([t_0,t_0+S) \times \R^3)$, $\xi \ge 0$, we have 
\EQ{\label{LEI.tdv}
& 2\int_{I} \!\int |\na \td v |^2\xi \,dxds \le \int |v|^2 \xi (x,t_0)\,dx \\
&\quad + \int_{I} \!\int |\td v |^2(\pa_s\xi + \De \xi) + (|\td v |^2  +2\td p)  \td v \cdot \na \xi \,dxds .
}
Here we have used that
$
\iint E^{(k)} \cdot v^{(k)} \xi  
= \iint \mathcal{J}_{(k)} H^{(k)} \cdot \nb (H^{(k)}(1-\Phi_{(k)})) \cdot v^{(k)} \xi =0$
for $k$ sufficiently large. In a way similar to the proof of Theorem \ref{loc.ex}, we get the local pressure decomposition for $\td p$, weak local $L^2$-continuity of $\td v(t)$, and local  $L^2$-convergence to initial data. We also get \eqref{LEI.tdv} with the time interval $I$ replaced by $[t_0,t]$ and an additional term $\int |\td v|^2 \xi (x,t)\,dx$ in the left side.

We have shown that $(\td v, \td p)$ is a local energy solution on $\R^3 \times I$ with initial data $\td v|_{t=t_0} = v(t_0)$. 

\bigskip

\noindent\texttt{Step 3.} To extend to a time-global local energy solution.

\medskip

We first prove that the combined solution
\[
u = v1_{[0,t_0]}+ \td v 1_{I},\quad  q = p1_{[0,t_0]}+ \td p 1_{I}
\]
is a local energy solution on the extended time interval $[0,T_1]=[0,t_0+S]$. It is obvious that $u$ and $q$ are bounded in $\cE_{T_1}$ and $L^\frac 32_\loc ([0,T_1]\times \R^3)$, respectively and $q$ satisfies the decomposition at each point $x_0\in \R^3$. Since we have for any $\zeta \in C_c^\infty([t_0,T_1)\times \R^3;\R^3)$
\[
\int_{t_0}^{T_1} -(\td v, \pa_t \zeta) + (\na \td v, \na \zeta) + (\td v, (\td v\cdot \na) \zeta) + (\td p, \div \zeta) dt = (\td v,\zeta)(t_0) = (v,\zeta)(t_0),
\]
and for any $\zeta \in C_c^\infty((0,t_0]\times \R^3;\R^3)$
\[
\int_{0}^{t_0} -(v, \pa_t \zeta) + (\na v, \na \zeta)+ ( v, ( v\cdot \na) \zeta) + ( p, \div \zeta) dt = -(v,\zeta)(t_0),
\]
from the weak continuity of $\td v$ at $t_0$ from the right and that of $v$ at $t_0$, we can prove that $(u,p)$ satisfies \eqref{NS} in the distribution sense: For any $\zeta\in C_c^\infty((0,T_1)\times \R^3;\R^3)$
\[
\int_{0}^{T_1} -(u, \pa_t \zeta) + (\na u, \na \zeta)+ ( u, ( u\cdot \na) \zeta) + ( q, \div \zeta) dt = 0.
\]

Also, since we already have local $L^2$-weak continuity of $u$ on $[0,T_1]\setminus \{t_0\}$, it is enough to check it at $t_0$; for any $\ph\in L^2(\R^3)$ with a compact support, 
\[
\lim_{t\to t_0^-} (u, \ph)(t)
=\lim_{t\to t_0^-} (v, \ph)(t)
=(v,\ph)(t_0)
=\lim_{t\to t_0^+} (\td v, \ph)(t)
=\lim_{t\to t_0^+} (u, \ph)(t).
\]

Finally, we prove the local energy inequality \eqref{LEI}. Indeed, for any $t\in (0,t_0]$, the inequality follows from the one of $v$. For $t\in (t_0,T_1)$, we add the inequality of $v$ in $[0,t_0]$ to the one of $\td v$ in  $[t_0,t]$ to get, for any non-negative $\xi \in C_c^\infty((0,T_1)\times \R^3)$,
\EQN{
\int |u|^2\xi &(t) dx
+2\int_0^t\! \int |\na u|^2\xi dxds
\\
&= \int |\td v|^2\xi(t) dx
+2\int_0^{t_0} \int |\na v|^2\xi dxds
+2\int_{t_0}^t\!\int |\na \td v|^2\xi dxds\\
&\leq  
\int_0^{t_0}\int |v|^2 (\pa_s \xi + \De \xi) +(|v|^2 +2p)(v\cdot \na)\xi dxds\\
&\qquad +\int_{t_0}^t\!\int |\td v|^2 (\pa_s \xi + \De \xi) +(|\td v|^2 +2\td p)(\td v\cdot \na)\xi dxds\\
&=\int_0^t\!\int |u|^2 (\pa_s \xi + \De \xi) +(|u|^2 +2q)(u\cdot \na)\xi dxds.
}

Therefore, $(u,q)$ is a local energy solution on $[0,T_1]$ and is an extension of $(v,p)$.

Then, by Lemma \ref{dot.E.3.2} and the proof of Corollary \ref{w.E4}, we can find $t_1\in (t_0 + \frac 78 S, t_0 + S)$ such that $ u(t_1)-V(t_1) \in E^4$. Repeating the above argument with new initial time $t_1$, we can get a  local energy solution in $[0,t_1+S)$. Iterating this process, we get
a local energy solution global in time. Note that $\norm{V}_{L^\infty([t_1,\infty)\times \R^3)}\leq \norm{V}_{L^\infty([t_0,\infty)\times \R^3)}$ whenever $t_1 >t_0$, so that on each step, we can extend the time interval for the existence by at least $\frac78 S$.  
\end{proof}

%
%
%
%\newpage
\section{Perturbations of global solutions with no spatial oscillation decay}
\label{sec6}

As mentioned in the introduction, there are many known non-decaying flows like constant flows, spatially periodic flows (flows on torus) and \emph{two-and-a-half dimensional flows}. The last two do not have oscillation decay in general. We do not have a general existence theory for initial data with no oscillation decay. However, the method of this paper can be used to construct perturbations of global solutions with no spatial oscillation decay.  The perturbation of a constant flow is already covered by Theorem \ref{global.ex}. The 
perturbation of spatially periodic flows and two-and-a-half dimensional flows are covered by the following theorem, which does not assume spatial decay or spatial oscillation decay of initial data.

\begin{theorem}\label{theorem2}
Let $V(x,t)$ be a global in time local energy solution of \eqref{NS} with
\[
V \in L^\infty(0,\I; L^q_\uloc), \quad V|_{t=0}=V_0 \in L^q_{\uloc,\si},
\]
for some $q>3$. Then for any $w_0 \in E^2_\si$, there is a global-in-time local energy solution $v$  of \eqref{NS} with initial data 
$v_0= V_0 + w_0$.
\end{theorem}

\begin{proof} We may assume $3<q<\infty$. Let $P$ be an associated pressure of $V$. Let $w = v-V$ and $q=p-P$. If $(v,p)$ is a solution of \eqref{NS}, then $(w,q)$ should satisfy the perturbed equation
\begin{equation}
\begin{cases}
\pa_t w -\De w + (V+w)\cdot \na w + w \cdot \nb V + \na q = 0  ,\quad
\div w =0 \\
w|_{t=0}=w_0 ,
\end{cases}
\end{equation}
which is \eqref{PNS} without the source term $V \cdot \nb V$. As a result, we don't need the spatial decay of $\nb V$, the strong local energy inequality \eqref{SLEI-v}, or the spatial decay estimate \eqref{pre.decay.est00} with $\nb V$. Hence, the proof is much easier.

Since $v_0 \in L^2_\uloc$, a local energy solution $v$ to \eqref{NS} exists on the time interval $[0,T]$ for some $T>0$ by Theorem \ref{loc.ex}.
Using Lemma \ref{th:LEI.w.0}, we have the local energy estimate for $w$
\EQ{ \label{LEI,w2}
\int & |w|^2(x,t) \xi(x)\, dx
+ 2\int_{0}^t\!\int |\na w|^2 \xi\, dxds \\ 
& \leq \int  |w_0|^2 \xi(x)\, dx
+\int_{0}^t\!\int |w|^2 (\De \xi +v\cdot \na\xi )\,dxds\\
&\quad + \int_{0}^t\!\int 2 q_{x_0} w \cdot \na \xi\, dxds
+ \int_{0}^t\!\int  2V\cdot (w \cdot \na )(w\xi)\, dxds,
}
for any $\xi\in C^\infty_c(\R^3)$, $\xi \ge 0$. Here ${q}_{x_0}$ is defined by
\EQN{
q_{x_0}(x,t) =& -\frac 13 (|w(x,t)|^2+2w\cdot V) + \pv \int_{B(x_0,2)} K_{ij}(x-y)(w_iw_j+V_iw_j+w_iV_j)(y,t)dy\\
&+\int_{B(x_0,2)^c} (K_{ij}(x-y)-K_{ij}(x_0-y))(w_iw_j+V_iw_j+w_iV_j)(y,t)dy,
}
where $K_{ij}(x) = \pa_{ij} \frac 1{4\pi |x|}$. Let $\phi_{x_0}$ and $\chi_R$ be defined as in \eqref{xi.def}.
 We first derive an a priori bound from \eqref{LEI,w2} taking $\xi = \phi_{x_0}^2$ and taking sup over $x_0 \in \R^3$ using $q>3$ (compare \eqref{eq6.4} below for the last term of \eqref{LEI,w2})
 \EQ{\label{eq6.3}
\sup_{0<t<T}\norm{w(\cdot, t)}_{L^2_\uloc}^2 
+ \norm{\na w}_{U^{2,2}_T}^2  + \norm{w}_{U^{3,3}_T} \leq A,
}
where $A=A(T,\norm{w_0}_{L^2_\uloc}, q, \norm{V}_{L^\infty L^q_{\uloc}})$.
Next, by the proof of \cite[Section 2]{KS} with $\xi= \phi_{x_0}^2\chi_R^2$, we can prove a spatial decay estimate (easier than  \eqref{pre.decay.est00})
\EQ{\label{decay.w.6}
\sup_{0<t<T}\norm{w(\cdot, t)\chi_R}_{L^2_\uloc}^2 + \norm{|\na w|\chi_R}_{U^{2,2}_T}^2 
\leq C\left(\norm{w_0\chi_R}_{L^2_\uloc}^2 + R^{-\frac 23}\right), 
}
where $C=C(T, A, q,\norm{V}_{L^\infty L^q_{\uloc}})$.
Indeed, all terms in \eqref{LEI,w2} except the last one can be estimated in the same way. For the last term,
\begin{align*}
\int_0^T\int &V (w\cdot \na) (w\phi_{x_0}^2\chi_R^2) dxdt\\
&\lec  
\int_0^T\int |V||w|(|\na w|\phi_{x_0}^2\chi_R^2
+ |w|\phi_{x_0}\chi_R^2 + \frac 1R |w|\phi_{x_0}^2) dxdt\\
&\lec_{A,T,q} \norm{V}_{L^\infty(0,\infty;L^q_\uloc)}\left[\norm{w\chi_R}_{U^{2,\left( \frac 12-\frac 1q\right)^{-1}}_T}\norm{|\na w|\chi_R}_{U^{2,2}_T} + \norm{w\chi_R}_{U^{3,3}_T}^2 + \frac 1R 
\right].
\end{align*}
Then, we use the Gagliardo-Nirenberg interpolation inequality to get
\begin{align*}
\norm{w\chi_R}_{U^{2,\left( \frac 12-\frac 1q\right)^{-1}}_T}
&\lec\norm{\na (w\chi_R)}_{U^{2,2}_T}^\frac 3q \norm{w\chi_R}_{U^{2,2}_T}^{1-\frac 3q} +\norm{w\chi_R}_{U^{2,2}_T}\\
&\lec \norm{|\na w|\chi_R}_{U^{2,2}_T}^\frac 3q\norm{w\chi_R}_{U^{2,2}_T}^{1-\frac 3q}+\norm{w\chi_R}_{U^{2,2}_T} + \frac {C_q(A,T)}{R^\frac3q},
\end{align*}
and hence (using $q>3$ to get a small constant)
\EQ{\label{eq6.4}
\int_0^T\int V (w\cdot \na) (w\phi_{x_0}^2\chi_R^2) dxdt
\leq  \frac 1{99} \norm{|\na w|\chi_R}_{U^{2,2}_T}^2 + C_q(A,T) \left(\norm{w\chi_R}_{U^{3,3}_T}^2 + \frac 1{R^\frac3q}\right).
}
This is enough to complete the proof for \eqref{decay.w.6}. Finally, as in Corollary \ref{w.E4}, it implies 
\begin{align}\label{w2Ep}
w(t)\in E^p(\R^3), \quad \text{for almost all }t\in (0,T]
\end{align} for any $3\leq p\leq  6$. 

Now, we repeat the extension argument in Section \ref{global.sec} with the replacement of the heat equation solution by the time-global solution $V$ given in Theorem \ref{theorem2}. Assume that a local energy solution $(v,p)$ to \eqref{NS} for initial data $v_0\in L^2_\uloc(\R^2)$ exists on $[0,T_0]$, $T_0\in (0,\infty)$. Then, by \eqref{w2Ep}, we can find $t_0 \in (0,T_0)$, arbitrarily close to $T_0$, such that $w(t_0) = W_0 +h_0$ where $W_0 \in C_{c,\si}^\infty(\R^3)$ and $h_0 \in E^4(\R^4)$ with $\norm{h_0}_{L^4_\uloc}<\de$. The construction of a local energy solution $(\td v, \td p)$ after time $t_0$ proceeds as follows. We decompose the solution
\[
\td v  = V + h + W, \quad \td p = p_V + p_h + p_W,
\] 
where $V$ is the given solution with pressure $p_V$, $(h,p_h)$ solves
\begin{align}\label{eqn.h2}
\begin{cases}
\pa_t h -\De h + \na p_h = -(V+h)\cdot \na h - (h\cdot \na)V\\
\div h=0, \quad h|_{t=t_0} = h_0,
\end{cases}
\end{align}
and $(W,p_W)$ satisfies \eqref{eqn.td.w} with the given solution $V$. The only difference with \eqref{eqn.h} is that \eqref{eqn.h2} excludes the term $(V\cdot \na)V$. With the interior regularity (see e.g.~\cite[Theorem A1]{LuoTsai})
\[
\norm{V}_{L^\infty(\R^3 \times (t_0,\infty))} \le C(t_0, \norm{V}_{L^\infty(0,\I; L^q_\uloc)}),
\]
(we need the strict inequality $q>3$ for this uniform estimate),
the rest of the proof is the same as in Section \ref{global.sec}. 
\end{proof}

\section*{Acknowledgments}
The research of both Kwon and Tsai was partially supported by NSERC grant 261356-13.

Hyunju Kwon, 
 Department of Mathematics, University of British Columbia, Vancouver, BC V6T 1Z2, Canada

Current address: 
Department of Mathematics, Institute for Advanced Study, 
Princeton, NJ 08540, USA;
 e-mail: hkwon@ias.edu

 \medskip
 
 Tai-Peng Tsai, Department of Mathematics, University of British Columbia, 
 Vancouver, BC V6T 1Z2, Canada;
 e-mail: ttsai@math.ubc.ca
 
\end{document}